\setlist{nolistsep}
\newtheorem{theo}{Theorem}
\numberwithin{equation}{section}
\newtheorem{theorem}{Theorem}[section]
\newtheorem{theorem*}{Main Result}
\newtheorem{lem}[theorem]{Lemma}
\newtheorem{coro}[theorem]{Corollary}
\newtheorem{cor*}[theorem*]{Corollary}
\newtheorem{obs}[theorem]{Observation}
\newtheorem{prop}[theorem]{Proposition}
\theoremstyle{definition}
\newtheorem{nota}[theorem]{Notation}
\newtheorem{remark}[theorem]{Remark}
\newtheorem{exm}[theorem]{Example}
\numberwithin{theorem}{section}
\def\<{\langle}
\def\>{\rangle}
\newcommand{\Res}{\mathsf{Res}}
\newcommand{\cS}{\mathcal{S}}
\newcommand{\pperp}{\perp\hspace{-0.15cm}\perp}
\newcommand{\GQ}{\mathsf{GQ}}
\newcommand{\per}{\text{\,\footnotesize$\overline{\land}$\,}}
\newcommand{\proj}{\mathsf{proj}}
\newcommand{\PGL}{\mathsf{PGL}}
\newcommand{\GL}{\mathsf{GL}}
\newcommand{\SL}{\mathsf{SL}}
\newcommand{\PSL}{\mathsf{PSL}}
\newcommand{\PGO}{\mathsf{PGO}}
\newcommand{\PGE}{\mathsf{PGE}}
\newcommand{\GE}{\mathsf{GE}}
\newcommand{\SE}{\mathsf{SE}}
\DeclareMathOperator{\Aut}{\mathsf{Aut}}
\DeclareMathOperator{\typ}{\mathsf{typ}}
\DeclareMathOperator{\cotyp}{\mathsf{cotyp}}
\newcommand{\PG}{\mathsf{PG}}
\newcommand{\Sc}{\mathsf{Sc}}
\newcommand{\id}{\mathsf{id}}
\newcommand{\K}{\mathbb{K}}
\newcommand{\F}{\mathbb{F}}
\newcommand{\Z}{\mathbb{Z}}
\renewcommand{\L}{\mathbb{L}}
\newcommand{\cC}{\mathcal{C}}
\newcommand{\cP}{\mathcal{P}}
\newcommand{\cE}{\mathcal{E}}
\newcommand{\cL}{\mathcal{L}}
\renewcommand{\O}{\mathbb{O}}
\DeclareMathOperator{\dist}{\mathsf{dist}}
\DeclareMathOperator{\diam}{\mathsf{diam}}
\DeclareMathOperator{\diag}{\mathsf{diag}}
\keywords{Simple groups of Lie type, projectivities, Levi factor}
\subjclass[2020]{51E24 (primary), 20E42 (secondary)}
\begin{document}
\title{Groups of projectivities and Levi subgroups in spherical buildings of simply laced type}
\author{Sira Busch, Jeroen Schillewaert \and Hendrik Van Maldeghem}
\address{Sira Busch\\ Department of Mathematics, M\"unster University, Germany}
\email{s\_busc16@uni-muenster.de}
\address{Jeroen Schillewaert \\ Department of Mathematics, University of Auckland, \\New-Zealand}\email{j.schillewaert@auckland.ac.nz}
\address{Hendrik Van Maldeghem \\ Department of Mathematics, Computer Science and Statistics, Ghent University, Belgium} \email{Hendrik.VanMaldeghem@UGent.be}
\thanks{The first author is funded by the Claussen-Simon-Stiftung and by the Deutsche Forschungsgemeinschaft (DFG, German Research Foundation) under Germany's Excellence Strategy EXC 2044 --390685587, Mathematics M\"unster: Dynamics--Geometry--Structure. All authors were supported by the New Zealand Marsden Fund grant UOA-2122 of the second author. This work is part of the PhD project of the first author.} 
\maketitle

\begin{abstract}
We introduce the special and general projectivity groups attached to a simplex $F$ of a thick, irreducible, spherical building of simply laced type. If the residue of $F$ is irreducible, we determine the permutation group of both projectivity groups of $F$, acting on the residue of $F$ and show that the special projectivity group determines the precise action of the Levi subgroup of a parabolic subgroup on the corresponding residue. This reveals three special cases for the exceptional types $\mathsf{E_6,E_7,E_8}$. Furthermore, we establish a general diagrammatic rule to decide when exactly the special and general projectivity groups of $F$ coincide. 
\end{abstract}


\section{Introduction}
The theory of buildings evolved during the search for analogues of exceptional simple Lie groups over arbitrary fields; traditionally people only worked over the fields $\mathbb{C}$ and $\mathbb{R}$. This was of interest, since working over arbitrary fields would allow the field to be finite and with that, one could find new families of finite simple groups. In 1955, Chevalley managed to construct these analogues and the groups he found are now known as \emph{Chevalley groups}. After Chevalley published his work, Jacques Tits developed the theory of buildings, attaching geometric structures to these groups (see \cite[page 335-335]{Abr-Bro:08}).

Chevalley groups defined over arbitrary fields are known to be \emph{groups of Lie type} (as in \cite{Car:72}). Groups of Lie type have BN-pairs and are hence associated to buildings (see \cite[page 108, Proposition 8.2.1]{Car:72}). They can be described as groups of automorphisms of spherical buildings (i.e.~buildings with finite Weyl groups, see \cite[\S 6.2.6]{Abr-Bro:08}). Chevalley groups are always simple, except in the cases $A_{1}(2)$, $A_{1}(3)$, $B_{2}(2)$, $G_{2}(2)$ (see \cite[page 172, Theorem 11.1.2]{Car:72}).

In this article we will focus on buildings of simply laced type and rank at least $3$. Such buildings automatically admit so-called \emph{root elations} (see \cite{Tits:77}). Then  we can define the Chevalley group attached to such a building $\Delta$ as the group of automorphisms generated by all root elations, which we will denote by $\Aut^{\dagger}(\Delta)$. This agrees with what is known as the the \emph{adjoint Chevalley group} (see \cite[page 198]{Car:72}), and is also called the \emph{little projective group of $\Delta$}. It is always simple in our cases, since we assume the rank to be at least $3$ (compare with \cite[Main Theorem]{Tits:64}).

Parabolic subgroups of Chevalley groups have attracted much attention in the literature. They can be written as semi-direct products of a \emph{unipotent subgroup} and a \emph{Levi subgroup} (see \cite[page 118]{Car:72}). So far, a lot of research focussed on the unipotent subgroups (see for example \cite{CurKanSei,GoLySo}). In this article we aim to shed some light on the Levi subgroups.

Let $\Sigma$ be an apartment of $\Delta$ and $C$ a chamber in $\Sigma$ that we will consider to be the fundamental chamber. Let $F$ be a face of $C$. A Levi subgroup of the parabolic subgroup $G_{F}$ of $\Aut^\dagger(\Delta)$ is a subgroup $L_{F}$, such that $G_{F}$ is the semi-direct product of $L_{F}$ and a unipotent subgroup. This matches with how it has been traditionally defined in the literature (see \cite[page 158, Definition 11.22]{Bor:69}). The parabolic subgroups opposite $G_{F}$ correspond bijectively to the Levi subgroups of $G_{F}$ (see \cite[page 199, Proposition 14.21]{Bor:69}). Hence a Levi subgroup fixes a simplex and a unique opposite simplex pointwise, and it acts as a group of automorphisms on the residue (also sometimes called the \emph{star}, for instance in \cite{Tits:74}) of each of these simplices. In the present paper, we determine the precise action of the Levi subgroup on that residue. To the best of our knowledge, this was not recorded before. However, it is known how it can be obtained by means of characters and co-characters of the torus of the corresponding Chevalley group, using the theory of roots
and co-roots in Chevalley groups. We explain and apply this approach in \cref{algapp}. It provides an algebraic answer to the problem of determining the precise action of the Levi subgroup on the corresponding residue of the building. 

However, approaching the problem in a geometric way via the theory of buildings gives rise to a new development of the theory of \emph{special and general projectivity groups}. The connection with the problem of the previous paragraph is given by our \cref{MR0}: it shows that the special projectivity group of $F$ coincides with the faithful permutation group induced by the stabiliser $\Aut^\dagger(\Delta)_F$ of $F$ in $\Aut^\dagger(\Delta)$ on the residue $\Res_\Delta(F)$ of $F$ in $\Delta$. This connects the special projectivity group of a simplex $F$ to the Levi subgroup of $F$. Since we determine all general and special groups of projectivities, this determines the precise action of the Levi subgroup of a parabolic subgroup on the corresponding residue geometrically.

As mentioned above, we also develop some basic and general theory about the projectivity groups. In projective geometry, the \emph{groups of projectivities}, or \emph{projectivity groups} play an important role in many proofs. For instance, projectivities between lines in a projective plane can be used to define non-degenerate conics (Steiner's approach) and prove properties of them. In \cite{Kna:88}, Knarr defined groups of projectivities and groups of even projectivities for generalised polygons and determined them in the finite case. This was further generalised to large infinite classes in \cite{Mal:98}, where the group of projectivities was called the \emph{general projectivity group} and the group of even projectivities the \emph{special projectivity group} related to a point or line. A generalisation of the definitions to all spherical buildings is obvious and natural questions are, for instance, 
\begin{itemize} 
\item when does the general projectivity group coincide with the special projectivity group, and 
\item can one  determine the various general and special  projectivity groups, particularly in the case where the residues are irreducible? 
\end{itemize}
In the present paper, we answer these questions for irreducible spherical buildings $\Delta$ with a simply laced diagram (see \cref{nsl} for the other cases). It will turn out that the special and general projectivity groups of residues of rank $1$ are always $\PGL_2(\K)$ in its natural permutation group action. This is Theorem D. For (irreducible) residues $R$ of rank at least $2$, in most cases we  generically obtain the maximal linear (algebraic or projective) group, including possible dualities, if opposition in the Coxeter diagram of the ambient building is trivial, and the one in the Coxeter diagram of $R$ is not trivial. There are only these four classes of exceptions:
\begin{compactenum}[$(i)$]
\item If $\Delta$ has type $\mathsf{D}_n$ and the type of $R$ contains the types $n-1$ and $n$ (hence $R$ is of type $\mathsf{D}_\ell$, for some $\ell<n$), then the projectivity groups are contained in $\PGO_{2\ell}(\K)$. Here, $\K$ is the underlying field. (Hence there are no proper similitudes in the projectivity groups.)
\item If $\Delta$ has type $\mathsf{E_6}$ and $R$ has type $\mathsf{A_5}$, then the special and general projectivity group consists of those members of $\PGL_6(\K)$ which correspond to matrices, for which the determinant is a third power in the field $\K$ of definition. 
 \item If $\Delta$ has type $\mathsf{E_7}$ and $R$ has type $\mathsf{A_5}$ containing the type $2$ (in Bourbaki labelling), then the special projectivity group consists of those members of $\PGL_6(\K)$, which correspond to matrices, for which the determinant is a square in the field $\K$ of definition. The general projectivity group extends this group with a duality; for instance a symplectic polarity, with corresponding matrix of square determinant.
 \item If $\Delta$ has type $\mathsf{E_7}$ and $R$ has type $\mathsf{D_6}$, then the special and general projectivity group are the simple group $\mathsf{P\Omega}_{12}(\K)$, extended with a class of diagonal automorphisms.
\end{compactenum} 
This is \cref{MR4}. A complete list in tabular form of all special and general projectivity groups acting on irreducible residues of buildings of type $\mathsf{E_6}, \mathsf{E_7}, \mathsf{E_8}$ and $\mathsf{D}_n$ (for $n \geq 4$) is included in Section 8. In our arguments, the so-called polar vertices of the diagram will play a crucial role, and our results will entail a new combinatorial characterisation of the polar type. Theorem B and C below show that these polar types are basically the only ones responsible for the special and general projectivity groups to coincide. 

We essentially provide two proofs for \cref{MR4}. The algebraic one, outlined in \cref{algapp}, and a geometric one in the remainder of \cref{allgroups}. However, the algebraic proof will make clear that the case of type $\mathsf{E_8}$ is, in a certain way, trivial---one always gets the full linear groups---so we skip this case in our geometric approach (however, it could be included and the geometric arguments will be contained in the first author's PhD thesis). The purpose of still providing the detailed geometric arguments for the other cases is the following: Firstly, we consider it interesting to see, where the exceptions mentioned above come from in a geometric way and how they emerge from the geometry of buildings. The geometry sometimes provides a different ``reason'' for a certain group to be the special projectivity group. Secondly, we also want to determine the general projectivity groups, which we can not obtain from the algebraic considerations. Thirdly, some geometric lemmata that we need, like for instance \cref{conn}, can be useful in other contexts, and lastly, it already prepares for handling the cases of non-simply laced types, where the question for non-split buildings is not so easily solved using the algebraic approach (think of buildings of so-called \emph{mixed type} that do not really admit a well-defined root system).  

The exceptions $(i)$ to $(iv)$ show that the questions stated above are not trivial and that the answer is rather peculiar, with exactly three special cases for the exceptional groups.

We now get down to definitions and statements of our Main Results. 
\section{Preliminaries and statement of the Main Results}
We will need some notions and notation related to spherical buildings, and of point-line geometries related to those. Excellent references for buildings are the books \cite{Abr-Bro:08} and \cite{Tits:74}, since it will be convenient to consider buildings as simplicial complexes. Standard references for the point-line approach to (spherical) buildings are \cite{Bue-Coh:13} and \cite{Shu:11}.

\subsection{Spherical buildings}\label{sphb} Let $\Delta$ be a spherical building. We will assume, as in \cite{Tits:74}, that $\Delta$ is a thick numbered simplicial chamber complex, and we will usually denote the type set by $I=\{1,2,\ldots,r\}$, where $r$ is the rank of $\Delta$, and the set of chambers by $\cC(\Delta)$. The type $\typ(F)$ of a simplex $F$ is the set of types of its vertices. A \emph{panel} is a simplex of size $r-1$. \emph{Adjacent} chambers are chambers intersecting in a panel. This defines in a natural way the \emph{chamber graph}. The \emph{(gallery) distance} $\delta(C,C')$ between two chambers $C$ and $C'$ is the distance in the \emph{chamber graph} of the vertices corresponding to $C$ and $C'$. 

One of the defining axioms of a spherical building is that every pair of simplices is contained in an \emph{apartment}, which is a thin simplicial chamber subcomplex isomorphic to a finite Coxeter complex $\Sigma(W,S)$ with associated Coxeter system $(W,S)$, where $W$ is a Coxeter group with respect to the generating set $S$ of involutions.
If $S=\{s_1,\ldots,s_r\}$, then let $P_i=\<s_1,\ldots,s_{i-1},s_{i+1},\ldots,s_r\>$, $i\in I$, be the maximal parabolic subgroups. The vertices of $\Sigma(W,S)$ of type $i\in I$ are the right cosets of $P_i$. The chambers are the sets of cosets of maximal parabolic subgroups containing a given member $w$ of $W$.   
 For each pair $(C,C')$ of adjacent chambers there exists exactly one \emph{folding}, that is, a type preserving idempotent morphism of $\Sigma(W,S)$ mapping $C'$ to $C$, and such that each chamber in the image has two chambers in its pre-image.   The image $\alpha$ of a folding is called a \emph{root}. The root associated to the \emph{opposite folding}, namely, the folding mapping $C$ to $C'$ is called the \emph{opposite} root, and is denoted by $-\alpha$. The intersection $\alpha\cap(-\alpha)$, called a \emph{wall}, is denoted by $\partial\alpha$ (and hence also by $\partial(-\alpha)$), and is also referred to as the \emph{boundary of $\alpha$}.   Every root contains a unique simplex that is fixed under  each automorphism of $\Sigma(W,S)$ preserving $\alpha$ (and not necessarily type preserving). This simplex is called the \emph{centre} of the root. If $\Sigma(W,S)$, or equivalently, $\Delta$,  is irreducible (see below), the type of such simplex is called a \emph{polar type} of $\Delta$. In the reducible case, the polar types of the connected components will be called \emph{polar types} of the building. 

For each vertex $v$ of $\Sigma(W,S)$, there exists a unique other vertex $v'$ of $\Sigma(W,S)$ with the property that every wall containing $v$ also contains $v'$ (and then automatically every wall containing $v'$ contains $v$); then $v$ and $v'$ are called \emph{opposite} vertices. \emph{Opposite} simplices of $\Sigma(W,S)$ are two simplices $A,B$ with the property that the vertex opposite to any vertex in $A$ is contained in $B$, and vice versa.  We denote $A\equiv B$. Opposition defines a permutation, also denoted by $\equiv$,  of order at most $2$ on the type set $I$. A subset $J\subseteq I$ is called \emph{self-opposite}, if $J^\equiv=J$.  The permutation $\equiv$, acting on $I$, induces an automorphism of the corresponding Coxeter diagram. Recall that the vertices of the Coxeter diagram correspond to the types, that is, the elements of $I$, and two vertices $i$ and $j$ are connected by an edge of weight $m_{ij}-2$, where  
$m_{ij}$ is the order of $s_is_j$ in $W$. Throughout, we use the Bourbaki labelling of connected spherical Coxeter diagrams \cite{Bou:68}. The Coxeter diagram, and by extension the chamber complex $\Sigma(W,S)$ and the building $\Delta$, are called \emph{simply laced}, if $m_{ij}\in\{2,3\}$, for all $i,j\in\{1,2,\ldots,r\}$, $i\neq j$. The building $\Delta$ is irreducible, if the Coxeter diagram is connected. The polar type in the simply laced and irreducible case is unique. It is the set of nodes to which the additional generator is joined, when constructing the affine diagram.
Hence, it is $\{1,r\}$ in case $\mathsf{A}_r$, it is $2$ in case of $\mathsf{D}_r$, and $2,1,8$ for $\mathsf{E_6,E_7,E_8}$, respectively. 

Opposite simplices in $\Delta$ are simplices that are opposite in some apartment, and then the building axioms guarantee that they are opposite in every apartment in which they are both contained.

We say that a vertex $v$ and a simplex $F$ are \emph{joinable}, if $v\notin F$ and $F\cup\{v\}$ is a simplex; notation $v\sim F$. (Note that we denote simplices with capital letters, such as $F$, since the letter $S$ already has a meaning. The letter $F$ stands for ``flag'', which is a synonym of simplex in the language of geometries.) The simplicial complex induced on the vertices joinable to a given simplex $F$ of a building $\Delta$ forms a building called the \emph{residue of $F$ in $\Delta$} and is denoted by $\Res_\Delta(F)$. It is well known that the Coxeter diagram of that residue is obtained from the Coxeter diagram of the building by deleting the vertices with type in $\typ(F)$. The opposition relation in $\Res_\Delta(F)$ will be denoted by $\equiv_F$ (also on the types), and two simplices of $\Res_\Delta(F)$, opposite in $\Res_\Delta(F)$, will occasionally be called \emph{locally opposite at $F$}. The cotype $\cotyp(F)$ of a simplex $F$ is $I\setminus\typ(F)$, and the \emph{type} of the residue $\Res_\Delta(F)$ is the cotype of $F$. 

Now let $F$ and $F'$ be two opposite simplices. Let $C\in\cC(\Delta)$ be such that $F\subseteq C$. Then there exists a unique chamber $C'\supseteq F'$ at minimal gallery distance from $C$. The chamber $C'$ is called the \emph{projection of $C$ from $F$ onto $F'$} and denoted $\proj_{F'}^F(C)$.  That mapping is a bijection from the set of chambers through $F$ to the set of chambers through $F'$ and preserves adjacency in both directions. It follows that it defines a unique isomorphism from $\Res(F)$ to $\Res(F')$, which we denote by $\proj_{F'}^F$ (as it is a special case of the projection operator, see 3.19 of \cite{Tits:74}), see also Theorem 3.28 of \cite{Tits:74}.  When the context makes $F$ clear, we sometimes remove the $F$ from the notation for clarity and simply write $\proj_{F'}$.  This projection has the following property.

\begin{prop}[Proposition~3.29 of \cite{Tits:74}] \label{Tits}
Let $F$ and $F'$ be opposite simplices of a spherical building $\Delta$. Let $v$ be a vertex of $\Delta$ such that $v\sim F$, and set $i:=\typ(v)\in I$. Then the type $i'$ of the vertex $\proj^{F}_{F'}(v)$ is the opposite in $\Res(F')$ of the opposite type of $i$ in $\Delta$, that is, $i'=(i^\equiv)^{\equiv_{F'}}$.  Also, vertices $v\sim F$ and $v'\sim F'$ are opposite in $\Delta$ if, and only if, $v'\equiv_{F'}\proj^{F}_{F'}(v)$.   
\end{prop}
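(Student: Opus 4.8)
The plan is to reduce everything to a single apartment and then compute with the longest element of the Weyl group, using two standard ingredients repeatedly: the \emph{gate property} of projections (a minimal gallery from a chamber $C$ to any chamber of a residue $\mathcal R$ passes through $\proj_{\mathcal R}(C)$, so gallery distances add through the gate), and the fact that gallery distances and projections are computed inside any apartment containing the two simplices involved. Write $J=\cotyp(F)$ and $J'=\cotyp(F')$; since $F\equiv F'$ we have $J'=J^\equiv$, whence $\ell(w_{0,J'})=\ell(w_{0,J})$, where $w_{0,K}$ denotes the longest element of $W_K$ and $w_0=w_{0,I}$. I will first prove the type statement via a clean identification of $\proj^{F}_{F'}$ inside an apartment, and then treat the opposition statement through a gallery-distance criterion for chambers.

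For the type statement, fix a chamber $C\supseteq F\cup\{v\}$, put $C'=\proj^{F}_{F'}(C)$, and choose an apartment $\Sigma$ containing both $C$ and $C'$; then $F$, $F'$, $v$ and $\proj^{F}_{F'}(v)$ all lie in $\Sigma$, and the global projection restricts to the apartment projection. Inside $\Sigma$ I would establish the identity $\proj^{F}_{F'}=\mathsf{op}_{\Res(F')}\circ\mathsf{op}_\Sigma$, where $\mathsf{op}_\Sigma$ is the opposition of $\Sigma$ (sending $F$ to $F'$ and inducing $\equiv$ on types) and $\mathsf{op}_{\Res(F')}$ is the opposition of the sub-Coxeter complex $\Res(F')\cap\Sigma$ (inducing $\equiv_{F'}$). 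Modelling $\Sigma$ on $(W,S)$ with $C$ the identity chamber, this reduces to the factorisation $w_0=w_{0,J}\,(w_{0,J}w_0)$ with additive lengths, which identifies $\proj^{F}_{F'}(C)$ with the minimal-length representative of the relevant coset; the case of a general chamber through $F$ follows since $W_J$ stabilises both $F$ and $F'$, acts transitively on the chambers through $F$, and commutes with both opposition maps. Reading off the type action of the composite immediately gives $\typ(\proj^{F}_{F'}(v))=(i^\equiv)^{\equiv_{F'}}$.

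For the opposition statement the core is the chamber-level criterion: for chambers $C\supseteq F$ and $D\supseteq F'$, $C$ is opposite $D$ in $\Delta$ if and only if $\proj^{F}_{F'}(C)$ is opposite $D$ in $\Res(F')$. This I would prove by the gate property: with $P=\proj^{F}_{F'}(C)$ one has $\delta(C,D)=\delta(C,P)+\delta(P,D)$, where $\delta(C,P)=\ell(w_0)-\ell(w_{0,J})$ is constant (using $\ell(xw_0)=\ell(w_0)-\ell(x)$ and the transitivity above) and $\delta(P,D)\le\ell(w_{0,J'})=\ell(w_{0,J})$, so that $\delta(C,D)\le\ell(w_0)=\diam(\Delta)$ with equality exactly when $\delta(P,D)=\ell(w_{0,J'})$; since maximal distance characterises opposition both in $\Delta$ and in $\Res(F')$, the criterion follows. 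The implication ``$v'\equiv_{F'}\proj^{F}_{F'}(v)\Rightarrow v\equiv v'$'' is then clean: choosing $C\supseteq F\cup\{v\}$ so that $w:=\proj^{F}_{F'}(v)$ lies in $P=\proj^{F}_{F'}(C)$, the residue-opposition $v'\equiv_{F'}w$ lets me pick (via the elementary fact that a vertex opposite a vertex of a chamber lies in some opposite chamber, applied inside $\Res(F')$) a chamber $D\supseteq F'\cup\{v'\}$ opposite $P$ in $\Res(F')$; the criterion gives $C\equiv D$, and comparing the type-$i$ and type-$i^\equiv$ vertices of these opposite chambers yields $v\equiv v'$.

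The converse implication ``$v\equiv v'\Rightarrow v'\equiv_{F'}\proj^{F}_{F'}(v)$'' is where the real difficulty sits, and I expect it to be the main obstacle. Knowing only that $v$ and $v'$ are opposite, I can place $v$ together with $F'\cup\{v'\}$ in a single apartment and recover $\mathsf{op}_\Sigma(v)=v'$; but to identify $\mathsf{op}_{\Res(F')}(v')$ with $\proj^{F}_{F'}(v)$ I must have $F$ in the same apartment, since the projection is defined through $F$. Equivalently, I need one apartment containing $F$, $F'$, $v$, $v'$ in which $F\cup\{v\}$ and $F'\cup\{v'\}$ are opposite. Thus the forward direction is equivalent to the coherence statement that the vertex-wise opposite simplices $F\cup\{v\}$ and $F'\cup\{v'\}$ (which do have $\equiv$-matching types, since $\typ(v')=i^\equiv$) are themselves opposite. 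I would secure this either by invoking the standard fact that in a spherical building two simplices are opposite as soon as their vertices are pairwise opposite under $\equiv$, or, to keep the argument self-contained, by an induction on the rank that uses the chamber criterion to reduce the existence of an opposite chamber pair through $F\cup\{v\}$ and $F'\cup\{v'\}$ to the same existence one rank lower inside $\Res(\{v'\})$, the elementary opposite-chamber fact providing the inductive step. Once the common apartment is in hand, the composite identity of the second paragraph concludes the proof exactly as in the forward computation.
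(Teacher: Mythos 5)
You cannot be compared against an internal argument here: the paper does not prove this proposition at all, but imports it verbatim as Proposition~3.29 of \cite{Tits:74}. Judged on its own, your reconstruction is correct and is essentially the standard Coxeter-theoretic proof of Tits's statement. All three pillars check out: (1) in an apartment $\Sigma$ containing $C\supseteq F\cup\{v\}$ and $C'=\proj^F_{F'}(C)$, modelled on $(W,S)$ with $C=1$, the chambers through $F$ form $W_J$, those through $F'$ form $W_Jw_0$, the gate from $C$ is the minimal-length coset representative $w_{0,J}w_0$, and indeed $w_{0,J}w_0=w_0w_{0,J'}=\mathsf{op}_{\Res(F')}(\mathsf{op}_\Sigma(1))$; left translation by $W_J$ (which fixes $F$ and $F'$ and commutes with both right multiplications and with the projection) propagates this identity to every chamber through $F$, and reading off types gives $i'=(i^\equiv)^{\equiv_{F'}}$; (2) the gate-property estimate $\delta(C,D)=\delta(C,P)+\delta(P,D)\le(\ell(w_0)-\ell(w_{0,J}))+\ell(w_{0,J'})=\ell(w_0)$, where $P=\proj^F_{F'}(C)$ and $D\supseteq F'$, with equality exactly when $P$ and $D$ are opposite in $\Res(F')$, is the correct chamber criterion; (3) the direction $v'\equiv_{F'}\proj^F_{F'}(v)\Rightarrow v\equiv v'$ follows from it as you describe. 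For the remaining direction, the fact you invoke --- simplices of $\equiv$-matching types whose vertices are pairwise opposite are themselves opposite --- is true, elementary, and non-circular, so citing it is legitimate and your fallback rank induction is unnecessary; its proof needs only ingredients the paper already records in \cref{sphb}: take any apartment $\Sigma$ containing $F\cup\{v\}$ and $F'\cup\{v'\}$; each pair of corresponding vertices is opposite in $\Delta$ and lies in $\Sigma$, hence is opposite in $\Sigma$ (opposition transfers to every common apartment by the building axioms), and since a vertex of a Coxeter complex has a unique opposite, $F'\cup\{v'\}=\mathsf{op}_\Sigma(F\cup\{v\})$; this is exactly the simultaneous opposition you need, after which your composite identity $\proj^F_{F'}=\mathsf{op}_{\Res(F')}\circ\mathsf{op}_\Sigma$ finishes the argument. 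As for trade-offs: the paper's citation buys brevity and defers to Tits's original development of the projection calculus (3.19--3.30 of his book), while your version buys self-containedness, at the cost of having to assemble precisely the standard facts about gates, convexity of apartments, and transfer of opposition that Tits's machinery packages.
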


Now let $\Delta$ be irredicible and of rank at least $2$. Let $\alpha$ be a root of $\Delta$. Let $U_\alpha$ be the group of automorphisms of $\Delta$ pointwise fixing every chamber that has a panel in $\alpha\setminus\partial\alpha$. The elements of $U_\alpha$ are called \emph{(root) elations} and $U_\alpha$ itself is called a \emph{root group}. An element of $U_\alpha$ is called a \emph{central elation} if it belongs to $U_\beta$ for each root $\beta$ having the same centre as $\alpha$. If $U_\alpha$ acts transitively on the the set of apartments containing $\alpha$, then we say that $\alpha$ is \emph{Moufang}. If every root is Moufang, then we say that $\Delta$ is Moufang. The automorphism group of $\Delta$ is denoted by $\Aut\:\Delta$ and, if $\Delta$ is Moufang, then the subgroup generated by the root elations is denoted by $\Aut^\dagger\Delta$ and called the \emph{little projective group of $\Delta$}. It is also generated by all central elations (in the simply laced case, all elations are central). Also, we denote by $\Aut^\circ(\Delta)$ the subgroup of type-preserving automorphisms of $\Delta$. In the literature, this is also sometimes denoted as $\mathrm{Spe}(\Delta)$.  Finally, each irreducible spherical building $\Delta$ of rank $r\geq 3$ is Moufang. 

\subsection{Groups of projectivity} Let $\Delta$ be a spherical building and $F$, $F'$ two simplices which are opposite, and which are not chambers. Then we call the isomorphism $\proj^F_{F'}$ a \emph{perspectivity (between residues)} and denote $F\per F'$. If $F_0,F_1,\ldots,F_{\ell}$ is a sequence of consecutively opposite simplices, then the isomorphism $\Res(F_0)\to\Res(F_\ell)$ given by $\proj^{F_{\ell-1}}_{F_\ell}\circ\cdots\proj^{F_1}_{F_2}\circ\proj^{F_0}_{F_1}$ is called a \emph{projectivity (of length $\ell$)}. If $\ell$ is even, it is called an \emph{even} projectivity, and if $F_0=F_\ell$, it is called a \emph{self-projectivity}. The set of all self-projectivities of a simplex $F$ is a group called the \emph{general projectivity group of $F$} and denoted $\Pi(F)$. Likewise, the set of all even self-projectivities of a simplex $F$ is a group called the \emph{special projectivity group of $F$} and denoted $\Pi^+(F)$.  Note that $\Pi(F)=\Pi^+(F)$ as soon as $(\typ(F))^\equiv\neq \typ(F)$.

Let $\Pi(F)$ be the general projectivity group of the simplex $F$ of a spherical building $\Delta$, with $F$ not a chamber. Then, as an abstract permutation group, $\Pi(F)$ only depends on the type of $F$. Likewise, the special projectivity group  $\Pi^+(F)$ only depends on the type of $F$. We have the natural inclusion $\Pi^+(F)\unlhd\Pi(F)$ and $[\Pi(F):\Pi^+(F)]\leq 2$. We denote the number $[\Pi(F):\Pi^+(F)]$ by $n(J)$, where the type of $F$ is $J$. We trivially have $n(J)=n(J^\equiv)$, because it is $1$ if $J^\equiv\neq J$. 

In the case that $\Delta$ has rank 2, that is, $\Delta$ is the building of a generalised polygon, $F$ is necessarily a single vertex and can be thought of as either a point (type $1$) or a line (type $2$) of the generalised polygon. Knarr \cite{Kna:88} shows that, if $\Delta$ is Moufang, then for every point or line $x$ of $\Delta$, the group $\Pi^+(x)$ coincides with the stabiliser of $x$ in the little projective group of $\Delta$, that is, the group generated by the root groups. We generalise this to arbitrary simplices in arbitrary Moufang spherical buildings of simply laced type. This is our first main result, \cref{MR0}. The strategy of the proof is the same as for the rank $2$ case. However, the proof requires that the unipotent radical of a parabolic subgroup in a Moufang spherical building pointwise stabilises the corresponding residue, and acts transitively on the simplices opposite the given residue. This follows from the Levi decomposition of parabolic subgroups in Chevalley groups. We provide a brief introduction. 

\subsection{The Levi decomposition in Chevalley groups} 
Let $\Delta$ be a building and $F$ a simplex of type $J$. Suppose $\Delta$ is of irreducible simply laced type and has rank at least $3$. Then, by the classification in \cite{Tits:74}, $\Delta$ is Moufang. Its little projective group $\Aut^\dagger(\Delta)$ is either a Chevalley group, or, in case $\Delta$ corresponds to a projective space of dimension $d$ defined over a non-commutative skew field $\L$,  it is  $\PSL_{d+1}(\L)$ (in its natural action). The stabiliser $P_{F}$ of $F$ is called a \emph{parabolic subgroup} and, if $\Aut^\dagger(\Delta)$ is a Chevalley group, admits a so-called \emph{Levi decomposition} $P_{F}=U_{F}L_{F}$, see Section~8.5 of \cite{Car:72}, where $U_{F}$ is the so-called \emph{unipotent radical} of $P_{F}$ and $L_{F}$ is called a \emph{Levi subgroup}. 

We provide an explicit description of $P_{F}, U_{F}$ and $L_{F}$ for $\PSL_{d+1}(\L)$ in the case that we will need most in the present paper, namely when $\Res_\Delta(F)$ is irreducible.  In that case one chooses the basis in such a way that each subspace of $F$ of dimension $i$ is generated by the first $i+1$ base points. Also, $F$ consists of $i$-dimensional subspaces with $0\leq i\leq d_1-1$ and $d-d_3\leq i\leq d-1$, where $|F|=d_1+d_3$. Set $d_2:=d+1-d_1-d_3$. Note that $J=\{1,\ldots,d_1,d-d_3+1,\ldots, d\}$. Then a generic element of $P_F$ looks like
\[\begin{pmatrix} T_{d_1} & M_{d_1\times d_2} & M_{d_1\times d_3} \\ O_{d_2\times d_1} & M_{d_2\times d_2} & M_{d_2\times d_3} \\ O_{d_3\times d_1} & O_{d_3\times d_2} & T_{d_3} 
\end{pmatrix},\]
where $T_{d_i}$, $i=1,3$, is an arbitrary invertible upper triangular matrix over $F$ (needless to say that $T_{d_1}$ and $T_{d_3}$ are independent of each other; even if $d_1=d_3$ they are considered different), $M_{d_i\times d_j}$ is an arbitrary $d_i\times d_j$ matrix, $i\in\{1,2\}$ and $j\in\{2,3\}$ (with similar remark as for the $T_{d_i}$), and $O_{d_i\times d_j}$ is the $d_i\times d_j$ zero matrix, $i\in\{2,3\}$, $j\in\{1,2\}$.  Also, the Dieudonn\'e determinant of the whole matrix must be $1$. With similar notation, and on top with $U_{d_i}$, $i\in\{1,3\}$, an arbitrary unipotent upper triangular $d_i\times d_i$ matrix, $D_{d_i}$, $i\in\{1,3\}$ an arbitrary invertible diagonal $d_i\times d_i$ matrix and $I_{d_2}$ the $d_2\times d_2$ identity matrix, generic elements of $U_F$ and $L_F$ look like (blanks replace zero matrices)
\[\begin{pmatrix} U_{d_1} & M_{d_1\times d_2} & M_{d_1\times d_3} \\ 
& I_{d_2} & M_{d_2\times d_3} \\ 
&& U_{d_3} 
\end{pmatrix} \mbox{ and }\begin{pmatrix} D_{d_1} & 
 \\ 
 & 
 M_{d_2\times d_2} & 
 \\ 
 && D_{d_3} 
\end{pmatrix},\] respectively (with again the requirement that the Dieudonn\'e determinant of the second matrix is equal to $1$). One indeed checks that $P_{F}=U_{F} L_{F}$ and $U_{F}\cap L_{F}=\{I_{d+1}\}$. Also, the following lemma is easily checked in this case. For Chevalley groups (the case of relevance for the current paper), the lemma follows from the Levi decomposition of parabolic subgroups (see Section~8.5 of \cite{Car:72}). We state it in its most general form, as given and proved in  
\cite[Proposition~24.21]{Muh-Ped-Wei:15}
 
\begin{lem}\label{L1}
Let $\Delta$ be a spherical Moufang building and let $F$ be a simplex of $\Delta$ of type $J$. Let $P_{F}$ be the stabiliser of $F$ in $\Aut^\dagger(\Delta)$. Then the unipotent radical $U_F\leq P_{F}$ acts sharply transitively on the set $F^\equiv$ of simplices opposite $F$, and pointwise fixes $\Res_\Delta(F)$.  
\end{lem}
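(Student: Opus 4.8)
The plan is to work inside a Chevalley group $G\leq\Aut^+(\Delta)$ (the case $\PGL_{d+1}(\L)$ being read off directly from the explicit block matrices displayed above) and to translate both assertions into the root-system language of the Levi decomposition in Section~8.5 of \cite{Car:72}. First I would fix a fundamental chamber $C_0\supseteq F$ in an apartment $\Sigma$, with associated Borel subgroup $B$, torus $T$, simple roots $\{\alpha_i:i\in I\}$ and positive system $\Phi^+$. Since the stabiliser of $F$ in the Weyl group is $W_{I\setminus J}=\<s_i:i\in I\setminus J\>$, the group $P_F$ is the standard parabolic generated by $B$ together with the $s_i$, $i\in I\setminus J$. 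I would then record $P_F=U_FL_F$ with unipotent radical $U_F=\prod_{\alpha\in\Phi^+\setminus\Phi_{I\setminus J}}U_\alpha$, normal in $P_F$ and contained in $B$, and Levi factor $L_F=\<T,U_\alpha:\alpha\in\Phi_{I\setminus J}\>=P_F\cap P_F^-$, where $P_F^-$ is the opposite parabolic fixing a chosen opposite $F^-\in F^\equiv$ inside $\Sigma$.

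For the pointwise fixing of $\Res_\Delta(F)$, the key observation is that the chambers of $\Delta$ containing $F$ are exactly the cosets in $P_F/B$, and $P_F$ acts on $\Res_\Delta(F)$ by left multiplication on $P_F/B$. Hence the kernel of this action is the core $\bigcap_{p\in P_F}pBp^{-1}$, which contains every normal subgroup of $P_F$ lying inside $B$. As $U_F\unlhd P_F$ and $U_F\leq B$, I conclude that $U_F$ fixes every chamber through $F$. Since elements of $\Aut^+(\Delta)$ are type preserving, fixing each such chamber forces each of its vertices to be fixed, and as every vertex of $\Res_\Delta(F)$ lies on a chamber through $F$, the residue is fixed pointwise.

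For sharp transitivity on $F^\equiv$, I would first show that the $U_F$-orbit of $F^-$ is all of $F^\equiv$. Strong transitivity of $\Aut^+(\Delta)$ gives transitivity on pairs of opposite simplices of type $J$; restricting to pairs with first coordinate $F$ shows that $P_F$ is transitive on $F^\equiv$. Because $L_F=P_F\cap P_F^-$ fixes $F^-$, the factorisation $P_F=U_FL_F$ forces the $P_F$-orbit of $F^-$ to coincide with its $U_F$-orbit, so $U_F$ is already transitive on $F^\equiv$. Sharpness follows from a stabiliser computation: if $uF^-=F^-$ with $u\in U_F$, then $u\in U_F\cap P_F^-$; writing $P_F^-=U_F^-L_F$ and invoking the uniqueness of the big-cell factorisation $U_F^-L_FU_F$ forces $u=1$. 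Thus $U_F$ acts regularly on $F^\equiv$.

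The main obstacle is establishing genuine transitivity (a single $U_F$-orbit) rather than merely the regularity of one orbit; this is precisely where strong transitivity of the little projective group is combined with the Levi decomposition, transporting transitivity from $P_F$ down to $U_F$ through the fact that the Levi complement stabilises $F^-$. Everything else is bookkeeping with the root-system description, and the $\PGL_{d+1}(\L)$ case is immediate from the displayed shapes of $U_F$ and $L_F$, whose off-diagonal blocks parametrise the opposites of $F$ while the unipotent diagonal blocks act trivially on the residue.
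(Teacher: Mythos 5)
Your proposal is correct and follows essentially the same route as the paper: transitivity of $P_F$ on $F^\equiv$ is transported to $U_F$ via the factorisation $P_F=U_FL_F$ together with the fact that $L_F$ fixes a chosen opposite simplex, and sharpness amounts to the trivial intersection of $U_F$ with the Levi factor (your big-cell uniqueness is equivalent to the paper's citation of $U_F\cap L_F=\{1\}$ from Theorem~8.5.2 of Carter). The only cosmetic difference is the residue-fixing step, where the paper observes directly that $U_F$ is generated by root groups whose roots contain $F$ outside their boundary, whereas you use the equally valid core argument ($U_F\unlhd P_F$ and $U_F\leq B$, hence $U_F$ lies in the kernel of the action on the chambers $P_F/B$ through $F$).
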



We will be interested in the faithful permutation group induced by $L_F$ on $\Res_\Delta(F)$. 

\subsection{Main results}

\begin{theo}\label{MR0}
Let $F$ be a simplex of a Moufang spherical building $\Delta$. Let $\Aut^\dagger(\Delta)$ be the automorphism group of $\Delta$ generated by the root groups. Then $\Pi^+(F)$ is permutation equivalent to the faithful permutation group induced by the stabiliser $\Aut^\dagger(\Delta)_F$ of $F$ in $\Aut^\dagger(\Delta)$ on the residue $\Res_\Delta(F)$ of $F$ in $\Delta$. 
\end{theo}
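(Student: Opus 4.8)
The plan is to follow Knarr's rank-$2$ strategy, using \cref{L1} as the engine, and to prove the equality of two subgroups of $\mathrm{Sym}(\Res(F))$: the special projectivity group $\Pi^+(F)$ and the image $\overline{P_F}$ of $P_F:=\Aut^+(\Delta)_F$ under the restriction homomorphism $\rho\colon P_F\to\mathrm{Sym}(\Res(F))$. First I would record two consequences of \cref{L1}. Since $U_F\leq\ker\rho$ and $P_F=U_F L_F$, the image $\overline{P_F}$ equals the image of a Levi complement $L_F=(P_F)_{F'}$ (the stabiliser of $F$ together with a chosen opposite $F'$), so both groups can be studied through $L_F$. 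I would also note the \emph{equivariance of perspectivities}: if $h\in\Aut(\Delta)$ fixes opposite simplices $G,G'$, then $h|_{\Res(G')}\circ\proj^{G}_{G'}=\proj^{G}_{G'}\circ h|_{\Res(G)}$, because $\proj^{G}_{G'}$ is canonically attached to the pair $(G,G')$.

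The core will be one computation, the \emph{building block}. Let $F'$ be opposite $F$ and let $F''\in(F')^\equiv$. By the sharp transitivity in \cref{L1} applied to $U_{F'}$ there is a unique $u\in U_{F'}$ with $u(F)=F''$. As $u$ fixes $\Res(F')$ pointwise and is an automorphism, naturality of projection gives, for $x\in\Res(F)$, the chain $\proj^{F}_{F'}(x)=u(\proj^{F}_{F'}(x))=\proj^{u(F)}_{u(F')}(u(x))=\proj^{F''}_{F'}(u(x))$, whence
\[ \proj^{F'}_{F''}\circ\proj^{F}_{F'}\;=\;u|_{\Res(F)}\colon\Res(F)\to\Res(F''). \]
Thus a length-$2$ segment of perspectivities through the centre $F'$ is exactly the restriction of a root-group element of $U_{F'}$; this is the bridge between projectivities and $\Aut^+(\Delta)$. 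For the inclusion $\Pi^+(F)\subseteq\overline{P_F}$ I would then take an even self-projectivity along a loop $F=F_0,F_1,\ldots,F_{2k}=F$ of consecutively opposite simplices and group its $2k$ perspectivities into $k$ consecutive building blocks with centres $F_1,F_3,\ldots,F_{2k-1}$. The $j$-th block yields $u_j\in U_{F_{2j-1}}$ with $u_j(F_{2j-2})=F_{2j}$, and the whole projectivity equals $(u_k\cdots u_1)|_{\Res(F)}$. The product $g:=u_k\cdots u_1$ telescopes to $g(F_0)=F_{2k}=F$, so $g\in P_F$ and $\rho(g)$ is the given projectivity.

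For the reverse inclusion $\overline{P_F}\subseteq\Pi^+(F)$ I would reduce, via the $U_F$-reduction, to $\rho(L_F)\subseteq\Pi^+(F)$. Since $\Pi^+(F)$ is a group, it suffices to treat a generating set of $L_F$, and in a Chevalley group $L_F$ is generated by the root subgroups of its Levi root subsystem, each fixing $F$ and $F'$. For a single such elation $u$ I would realise $\rho(u)$ as a closed length-$4$ even self-projectivity $\proj^{F_3}_{F}\circ\proj^{F_2}_{F_3}\circ\proj^{F_1}_{F_2}\circ\proj^{F}_{F_1}$, applying the building block in both halves, with centres $F_1,F_3$ opposite $F$ and an intermediate $F_2$, the existence of the needed elations being guaranteed by the sharp transitivity in \cref{L1}.

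The hard part will be exactly this last step: solving the elation equations uniformly so that two-centre loops cover all of $\rho(L_F)$, together with parity control. Realising $\rho(u)=\rho(ba)$ with $a\in U_{F_1}$, $b\in U_{F_3}$ amounts to hitting the coset $uU_F$ by a product from $\bigcup_{F_1,F_3}U_{F_3}U_{F_1}$; degenerate choices ($F_1=F_3$) collapse to the identity by sharp transitivity, so two distinct centres are essential, and one must show these two-centre products (a unipotent radical times an opposite one) generate the residue-action of the whole Levi. This is the precise analogue of the delicate elation computation in Knarr's rank-$2$ proof, and it is where the full strength of \cref{L1} — not merely that $U_F$ fixes $\Res(F)$ pointwise, but that it acts \emph{sharply transitively} on $F^\equiv$ — is indispensable, both to produce the elations and to ensure no odd (leftover-perspectivity) contribution survives inside the even group. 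Once this generation-and-parity statement is in place, the two inclusions give $\Pi^+(F)=\overline{P_F}$ as subgroups of $\mathrm{Sym}(\Res(F))$, which is the asserted permutation equivalence.
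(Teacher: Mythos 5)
Your first inclusion ($\Pi^+(F)\subseteq\overline{P_F}$) is correct and is precisely the paper's argument: the building-block identity $\proj^{F'}_{F''}\circ\proj^{F}_{F'}=u|_{\Res(F)}$ with $u\in U_{F'}$, $u(F)=F''$, followed by telescoping over the loop. The gap is in the reverse inclusion, and it is twofold. First, your reduction rests on the claim that in a Chevalley group the Levi subgroup $L_F$ is generated by the root subgroups of its Levi root subsystem. This is false: $L_F$ is generated by those root subgroups \emph{together with a maximal torus} $H$, and $H$ is not contained in the subgroup they generate. Worse, the image of $\langle U_\alpha\mid\alpha\in\Phi_J\rangle$ on $\Res_\Delta(F)$ is only the little projective group $\Aut^+(\Res_\Delta(F))$ of the residue, whereas the image of $H$ contributes diagonal automorphisms lying outside it in general. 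The difference between these two groups is exactly the substance of the theorem and of the paper's tables ($\PGL_2(\K)$ versus $\PSL_2(\K)$ for panels, $\PGL_{\ell+1}(\K)$ versus $\PSL_{\ell+1}(\K)$, $\PGE_6(\K)$ versus $\mathsf{E}_6(\K)$, and so on). So even if your ``hard part'' were settled, your two inclusions would only give $\Aut^+(\Res_\Delta(F))\subseteq\Pi^+(F)\subseteq\rho(P_F)$, which does not close up to the asserted equality. Second, that ``hard part'' itself --- hitting each coset $uU_F$ by two-centre products and showing such products account for the whole residue action of the Levi --- is left unproven, and as formulated it is essentially as strong as the theorem.

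The paper's proof of the reverse inclusion avoids both difficulties by a different decomposition: it neither reduces to the Levi nor requires the individual elations to fix $F$. Write $h\in\Aut^+(\Delta)_F$ as an \emph{arbitrary} product of root elations $g_1,\dots,g_m$ of $\Delta$ (this is just the definition of $\Aut^+(\Delta)$). For a single root elation $g$, choose a simplex $T$ of type $\typ(F)$ whose residue is fixed pointwise by $g$ (the paper produces one from the centre of the root of $g$, using the simply laced hypothesis as in \cref{L1}), and a simplex $R$ opposite both $F$ and $T$. Your own equivariance identity, together with $\proj^{T}_{R}\circ\proj^{R}_{T}=\id$, then gives
\[
g|_{\Res(F)}=\proj^{R^g}_{F^g}\circ\proj^{T}_{R^g}\circ\proj^{R}_{T}\circ\proj^{F}_{R},
\]
an \emph{even} (length-four) projectivity from $\Res(F)$ to $\Res(F^g)$, valid whether or not $g$ fixes $F$. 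Composing these pieces along the chain $F,\,F^{g_1},\,F^{g_1g_2},\ldots$ and using $F^{g_1\cdots g_m}=F$ exhibits $\rho(h)$ as an even self-projectivity of $F$. No generating set for $L_F$, no coset equations and no parity control are needed; this one observation is what your proposal is missing.
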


Going back to the case where $\Delta$ is a Moufang building of rank $2$, the results in Chapter 8 of \cite{Mal:98} show that $n(\{1\})=n(\{2\})=1$, as soon as $\Delta$ is a so-called ``Pappian polygon'' (for a definition of the latter, see Section 3.5 of \cite{Mal:98}). In any case, we always have $1\in\{n(\{1\}),n(\{2\})\}$ due to Lemma~8.4.6 of \cite{Mal:98}. One of the goals of the present paper is to generalise this to all spherical buildings.  This will be achieved by proving a general sufficient condition in $J$ for $n(J)$ being equal to $1$. To state this, we say that the type $J$ of a simplex is \emph{polar closed}, if we can order the elements of a partition of $J$ into singletons and pairs, say $J_1,\ldots,J_k$, such that, for each $\ell\in\{1,\ldots,k\}$, the type $J_\ell$ is a polar type in the residue of $J_1\cup\cdots\cup J_{\ell-1}$. We then have:

\begin{theo} \label{MR1}
Let $\Delta$ be a spherical building with type set $I$. If either $J\neq J^\equiv$ or $J\subseteq I$ is polar closed, then $n(J)=1$.  
\end{theo}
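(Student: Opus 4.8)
The plan is to treat the two alternatives separately. The case $J\neq J^\equiv$ requires nothing geometric: a perspectivity sends a simplex of type $J$ to one of type $J^\equiv$, so along any self-projectivity $F=F_0,F_1,\dots,F_\ell=F$ the types alternate between $J$ and $J^\equiv$; since the sequence returns to type $J$ and $J\neq J^\equiv$, the length $\ell$ is forced to be even. Hence every self-projectivity is even, $\Pi(F)=\Pi^+(F)$, and $n(J)=1$. This is exactly the observation already recorded after the definition of $\Pi^+(F)$, and it reduces the theorem to the case $J=J^\equiv$ with $J$ polar closed.

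For the substantial case I would first reformulate. Since $\Pi^+(F)\unlhd\Pi(F)$ with index at most $2$, we have $n(J)=1$ precisely when some odd-length self-projectivity already lies in $\Pi^+(F)$; multiplying such an element by the inverse of an even self-projectivity representing the same permutation, this happens if and only if the identity permutation of $\Res_\Delta(F)$ is realised by a self-projectivity of \emph{odd} length. Thus the whole theorem becomes the construction, for polar closed $J$, of an odd self-projectivity of a type-$J$ simplex equal to the identity.

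I would build this odd trivial loop by induction along the partition $J_1,\dots,J_k$ witnessing that $J$ is polar closed, writing $F=F_1\cup\cdots\cup F_k$ with $\typ(F_\ell)=J_\ell$ and $F_\ell$ of polar type in $R_{\ell-1}:=\Res_\Delta(F_1\cup\cdots\cup F_{\ell-1})$. The base case $k=1$ is the heart of the matter: here $F_1$ is the centre of a root $\alpha$, and the point is that the centre is a \emph{single} simplex of polar type, so the type-preserving reflection $r_\alpha$ interchanges $F_1$ with an opposite simplex $F_1'$. Lifting $r_\alpha$ to the element of $\langle U_\alpha,U_{-\alpha}\rangle\leq\Aut^+(\Delta)$ and invoking \cref{MR0} to pass between group elements and (even) self-projectivities, this swap of the opposite pair $F_1\equiv F_1'$, combined with the canonical perspectivity $F_1\per F_1'$, produces a self-projectivity of odd length whose underlying permutation of $\Res_\Delta(F_1)$ is trivial. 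For the inductive step I would descend to the residue: by the Levi decomposition (\cref{L1}) the little projective group of $R_{\ell-1}$ is induced by $\Aut^+(\Delta)_{F_1\cup\cdots\cup F_{\ell-1}}$, so the base case applied \emph{inside} $R_{\ell-1}$ gives an odd trivial self-projectivity of $F_\ell$ there, and I would concatenate these level-by-level loops — correcting, when necessary, by elements of the unipotent radicals furnished by \cref{L1} — into a single odd self-projectivity of $F$ equal to the identity.

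The main obstacle I anticipate is twofold, and both difficulties sit in the base case and in the gluing. First, one must control not merely the existence of an odd self-projectivity but its precise action: the reflection lifting $r_\alpha$ a priori induces a field-dependent identification of $\Res_\Delta(F_1)$ with $\Res_\Delta(F_1')$, and showing that the polar property forces the resulting permutation to be exactly the identity (rather than a nontrivial, possibly type-interchanging, element) is where the special role of the polar type must be used in full. Second, in the inductive step the opposition available inside $R_{\ell-1}$ is only \emph{local} (the relation $\equiv_{F_1\cup\cdots\cup F_{\ell-1}}$), whereas a self-projectivity of $F$ in $\Delta$ requires \emph{global} opposition; reconciling the two through \cref{Tits} — equivalently, arranging the intermediate simplices so that local and global opposition coincide, which is precisely what the nesting of polar types in the definition of polar closed is designed to guarantee — is the key technical point that makes the concatenation legitimate.
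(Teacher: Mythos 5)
Your overall skeleton does match the paper's: the non-self-opposite case is dispatched exactly as you describe, the reduction via \cref{O1} to writing the identity as an odd product of perspectivities is the paper's Observation, and the paper likewise inducts along the partition $J_1,\dots,J_k$ with root groups at polar-type simplices powering the base case. However, the two points you yourself flag as ``anticipated obstacles'' are precisely the two substantive results that constitute the paper's proof, and your proposal proves neither of them; these are genuine gaps, not loose ends. For the base case, the paper never uses the reflection lift $m_\alpha\in\langle U_\alpha,U_{-\alpha}\rangle$, nor \cref{MR0}: it takes a \emph{single} nontrivial elation $\theta\in U_\alpha$, sets $F'':=(F')^\theta$, and shows by a gallery-distance computation in the apartment $(-\alpha)\cup(-\alpha)^\theta$ that $F,F',F''$ is a projective $3$-cycle, i.e.\ $\proj^{F''}_{F}\circ\proj^{F'}_{F''}\circ\proj^{F}_{F'}=\id$; since $\theta$ fixes $\Res(F)$ pointwise, this is equivalent to saying that $\theta$ restricted to $\Res(F')$ \emph{is} the perspectivity $\proj^{F'}_{F''}$. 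Your claim that the reflection induces exactly $\proj^{F_1}_{F_1'}$ on $\Res_\Delta(F_1)$ is true, but proving it requires essentially this same statement (apply it to each unipotent factor of $m_\alpha=u'uu''$), so as written your base case assumes the crux rather than establishing it. Note also that the paper's route keeps the proof of this theorem purely incidence-geometric, independent of \cref{MR0} and \cref{L1}.

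The second gap is the gluing. ``Concatenate these level-by-level loops, correcting, when necessary, by elements of the unipotent radicals'' is not an argument, and the local-versus-global opposition problem you correctly identify is resolved in the paper not by unipotent corrections but by \cref{indlemma}: given a projective $3$-cycle $S_1,S_2,S_3$ of type $J$, and $3$-cycles through locally opposite pairs in $\Res_\Delta(S_3)$ (supplied by the base case applied inside that residue), one constructs a projective $3$-cycle of type $J\cup K$ of the form $S_1\cup T_1$, $S_2\cup T_2'$, $S_3\cup T_3''$. The delicate point your sketch misses is the choice of the new pieces: one cannot take $T_2'$ to be the projection $T_2=\proj_{S_2}(T_1)$ (the resulting simplices would not even be pairwise opposite in $\Delta$); instead $T_2'$ must be chosen \emph{locally opposite} $T_2$ at $S_2$, then $T_3''$ completes $\proj_{S_3}(T_2)$ and $\proj_{S_3}(T_2')$ to a $3$-cycle in $\Res_\Delta(S_3)$, and the verification that the triple so obtained is again a $3$-cycle rests on repeated use of \cref{Tits} together with the gate property (\cref{gate}). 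Until both the $3$-cycle property at polar type and this inductive lifting are actually proved, the proposal is a plan, not a proof.
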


To see a partial converse of this statement, we restrict to the simply laced case (see also \cref{nsl}). 

\begin{theo}\label{MR2}
Let $\Delta$ be an irreducible spherical building of simply laced type with type set $I$. If $J\subseteq I$, $J^\equiv=J$ and $I\setminus J$ has at least one connected component $K$ of size at least $2$, such that $I\setminus K$ is not polar closed, then $n(J)=2$. 
\end{theo}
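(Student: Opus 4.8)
The plan is to prove the contrapositive-flavoured statement directly: under the hypotheses, exhibit an \emph{odd} self-projectivity of $F$ that acts nontrivially, thereby forcing $\Pi(F)\neq\Pi^+(F)$, i.e.\ $n(J)=2$. Since we already know $\Pi^+(F)\unlhd\Pi(F)$ with index at most $2$, it suffices to produce a single projectivity of odd length which is not an even self-projectivity. The key structural input is \cref{MR0}, which identifies $\Pi^+(F)$ with the permutation group induced on $\Res_\Delta(F)$ by the little projective stabiliser $\Aut^+(\Delta)_F$. So the first move is to translate the problem into the language of Levi subgroups: $\Pi^+(F)$ is the action of the Levi factor $L_F$ on the residue, and the question of whether $n(J)=2$ becomes the question of whether a single perspectivity $\proj^{F}_{F'}$ (an odd, length-$1$ projectivity, once we compose with an isomorphism identifying $\Res(F')$ with $\Res(F)$) produces an automorphism of $\Res_\Delta(F)$ lying \emph{outside} that Levi action.

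The heart of the argument will be a careful analysis of the connected component $K$ of $I\setminus J$. Because $J^\equiv=J$, the opposition $\equiv$ restricts to the diagram of the residue $\Res_\Delta(F)$, whose type set is $I\setminus J$; on the component $K$ it either acts trivially or as the nontrivial diagram automorphism. The plan is to show that a perspectivity $F\per F'$ induces on $\Res_\Delta(F)$ a map that, via \cref{Tits}, sends a vertex of type $i$ to a vertex of type $(i^\equiv)^{\equiv_{F'}}$; composed with the local opposition datum this realises a \emph{duality} (type-reversing automorphism) on the $K$-component precisely when the two opposition maps disagree there. The hypothesis that $|K|\geq 2$ guarantees $K$ is a genuine rank-$\geq 2$ residue where such a duality is a nontrivial, non-inner phenomenon, and the hypothesis that $I\setminus K$ is \emph{not} polar closed is exactly what obstructs absorbing this duality back into the even projectivity group: by the definition of polar closedness and the role of polar types (the centres of roots), an odd perspectivity can be ``straightened'' into an even one only when one can successively peel off polar types, which is the content of the sufficient condition in \cref{MR1}.

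Concretely, I would argue as follows. First, reduce to understanding $\Pi(F)/\Pi^+(F)$ as measuring whether the odd perspectivities act within or outside the group generated by even ones; by \cref{MR0} and \cref{L1} (the unipotent radical acts sharply transitively on $F^\equiv$ and fixes the residue pointwise), any two opposite simplices $F'$ yield perspectivities differing by an element of $\Pi^+(F)$, so it is enough to analyse \emph{one} perspectivity $\proj^F_{F'}$. Second, compute its type action via \cref{Tits} and show that on the component $K$ it induces the composite of the ambient opposition with the local opposition; the assumption on $K$ forces this composite to be a nontrivial diagram automorphism of $K$, hence the perspectivity is genuinely type-reversing there and cannot coincide with any type-preserving element of $L_F$. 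Third, and this is where the \emph{not polar closed} hypothesis does the real work, I would show that odd self-projectivities coincide with even ones only when the diagram automorphism produced above is already realised inside $\Pi^+(F)$, and that this realisability is governed by a chain of polar reductions of $I\setminus K$; failing polar closedness of $I\setminus K$ means no such chain exists, so the duality is not inner to $\Pi^+(F)$ and $n(J)=2$. The main obstacle will be this third step: making precise and rigorous the link between the combinatorial notion of polar closedness and the group-theoretic statement that a type-reversing projectivity cannot be realised as an even projectivity. I expect this requires a delicate induction on the rank, peeling off polar types one at a time (matching the partition in the definition of polar closed), together with the simply laced classification to control which diagram automorphisms of $K$ are forced versus optional; the size-$\geq 2$ condition on $K$ ensures we are not in the degenerate rank-$1$ situation where every automorphism is trivially even.
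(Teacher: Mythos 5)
Your overall skeleton is the same as the paper's: by \cref{Tits}, every perspectivity between simplices of the self-opposite type $J$ induces one and the same permutation $\sigma\colon i\mapsto(i^\equiv)^{\equiv_{F'}}$ of the cotype set $I\setminus J$, so a self-projectivity of length $\ell$ induces $\sigma^\ell$; once one knows that $\sigma$ interchanges two types, every odd power of $\sigma$ does so as well, hence no odd self-projectivity is the identity, and \cref{O1} yields $n(J)=2$. The genuine gap is that the claim carrying all of the content --- that the hypothesis ``$|K|\geq 2$ and $I\setminus K$ not polar closed'' forces $\sigma$ to move some type of $K$ --- is asserted in your second step but never proved. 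This is exactly what the paper establishes, and not by a soft argument: it is an exhaustive case analysis over the simply laced diagrams ($\mathsf{A}_r$ via \cref{caseAn}; $\mathsf{E_6}$ by listing all $14$ self-opposite types and checking type-interchange via \cref{Tits}; $\mathsf{E_7}$ and $\mathsf{E_8}$ by matching the polar closed types against the fact that the only connected subdiagrams of size at least $2$ with trivial opposition are of types $\mathsf{D_4}$, $\mathsf{D_6}$ and, inside $\mathsf{E_8}$, also $\mathsf{E_7}$; and $\mathsf{D}_n$ directly). Your proposed substitute --- ``a delicate induction on the rank, peeling off polar types one at a time'' --- is the mechanism of the \emph{sufficient} direction (\cref{MR1}); it is not at all clear how it would produce the present necessity statement, and in any case you do not carry it out.

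Your third step, moreover, misidentifies where the polar-closedness hypothesis does its work, and the step itself is vacuous. There is no residual group-theoretic question of whether the duality $\sigma$ ``is realised inside $\Pi^+(F)$'': \cref{O1} only requires showing that the identity is not a product of an odd number of perspectivities, and this follows the instant one knows that every odd self-projectivity induces a nontrivial permutation of the types, since a map inducing a nontrivial type permutation cannot be the identity, irrespective of which group it does or does not belong to. (In fact, because local opposition commutes with every automorphism of the residue diagram, $\sigma$ is an involution, so every element of $\Pi^+(F)$ is type preserving and could never induce $\sigma\neq\id$; thus ``realisability of $\sigma$ in $\Pi^+(F)$'' is merely a restatement of $\sigma=\id$.) The hypothesis that $I\setminus K$ is not polar closed enters purely combinatorially, as the diagram-theoretic condition equivalent to nontriviality of $\sigma$ on $K$ --- this is essentially the content of \cref{MCor} --- so your third step collapses back onto the unproven assertion of your second step. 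Relatedly, your opening criterion (``exhibit an odd self-projectivity that acts nontrivially'') is not the right one: even self-projectivities also act nontrivially; what is needed is that some (equivalently, by the index-two property, every) odd self-projectivity lies outside $\Pi^+(F)$, which via \cref{O1} amounts to showing that no odd product of perspectivities equals the identity.
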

Note that, if $J$ is polar closed, then for each connected component $K$ of $I\setminus J$ the type set $I\setminus K$ is polar closed. 

This implies the following combinatorial characterisation of the polar type in connected simply laced spherical diagrams. For $K\subseteq I$ we denote by $\overline K$ the union of all connected components of $K$ of size at least $2$.

\begin{cor*}\label{MCor}
The polar type of a connected simply laced spherical diagram $D_I$ over the type set $I$ is the unique smallest subset $J\subseteq I$ with the property that opposition in $D_{\overline{I\setminus J}}$ coincides with opposition in $D_I$.
\end{cor*}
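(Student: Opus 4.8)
The plan is to reduce the diagrammatic statement to a clean comparison of two opposition involutions and then prove the two inclusions separately. First I would unpack the hypothesis. Since $\equiv_F$ is the opposition of the residue diagram $D_{I\setminus J}$, acting componentwise and fixing every singleton component, I read ``opposition in $D_{\overline{I\setminus J}}$ coincides with opposition in $D_I$'' as the assertion that $\equiv$ and $\equiv_F$ induce the \emph{same} involution on $I\setminus J$; on the big components $\overline{I\setminus J}$ this is literally the comparison in the statement, while on the singleton components it forces $\equiv$ to fix each of them (so in particular $J^\equiv=J$). Because $J$ is the type of a simplex I restrict to nonempty $J$, which is exactly what excludes the trivially coincident case $J=\varnothing$. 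I would record at the outset the explicit opposition $\equiv=-w_0$ in each connected simply laced type (the flip for $\mathsf{A}_r$ and $\mathsf{E_6}$, the tail-swap for $\mathsf{D}_n$ with $n$ odd, and $\id$ for $\mathsf{E_7},\mathsf{E_8}$ and $\mathsf{D}_n$ with $n$ even), together with the polar types $\{1,r\}$, $\{2\}$, $\{2\}$, $\{1\}$, $\{8\}$, each of which is self-opposite.

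For \emph{sufficiency} I would check directly that the polar type $J_0$ satisfies the reformulated condition, type by type, since deleting $J_0$ produces a residue whose opposition is visibly the restriction of $\equiv$: one has $\mathsf{A}_r\setminus\{1,r\}=\mathsf{A}_{r-2}$ (flip matches flip), $\mathsf{D}_n\setminus\{2\}=\mathsf{A}_1\sqcup\mathsf{D}_{n-2}$ (the singleton of type $1$ is $\equiv$-fixed, and the tail-swap on $\mathsf{D}_{n-2}$ has the same parity as on $\mathsf{D}_n$), $\mathsf{E_6}\setminus\{2\}=\mathsf{A}_5$ (the linear flip reproduces $1\leftrightarrow 6$, $3\leftrightarrow 5$), and finally $\mathsf{E_7}\setminus\{1\}=\mathsf{D}_6$, $\mathsf{E_8}\setminus\{8\}=\mathsf{E_7}$, both of trivial opposition matching the trivial $\equiv$. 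This is a short finite verification whose only bookkeeping is the $\mathsf{D}_n$ parity and the three exceptional identifications.

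The substance of the corollary is \emph{minimality}: that $J_0$ is contained in every nonempty $J$ satisfying the condition, so that $J_0$ is the minimum and hence the unique smallest such set. I would split this according to whether $\equiv$ is trivial. When $\equiv=\id$ (types $\mathsf{E_7},\mathsf{E_8}$ and $\mathsf{D}_n$, $n$ even) the condition reduces to requiring every big component of $I\setminus J$ to have trivial opposition, i.e. to be one of the simply laced types with $w_0=-1$, namely $\mathsf{D}_{2m}$, $\mathsf{E_7}$ or $\mathsf{E_8}$; since deleting any node other than the polar one leaves a big $\mathsf{A}_k$-component with $k\ge 2$, whose flip is nontrivial, such a $J$ must contain $J_0$. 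When $\equiv$ is the nontrivial flip or tail-swap I would use leaf-tracking: for $\mathsf{A}_r$, if $1\notin J$ then the component of the node $1$ is an initial segment $\{1,\dots,t\}$ whose intrinsic opposition sends $1\mapsto t$, which equals $\equiv(1)=r$ only if $t=r$, forcing $J=\varnothing$; hence $1\in J$, and symmetrically $r\in J$. For $\mathsf{D}_n$ (both parities) the type-$1$ leaf is $\equiv$-fixed, and its residue component can fix it only when the whole chain through the fork survives, which again forces $J=\varnothing$ unless node $2$ has been removed; so $2\in J$. The case $\mathsf{E_6}$ I would settle by the same principle supplemented by direct inspection, tracking $1\leftrightarrow 6$ and $3\leftrightarrow 5$ to confirm that every nonempty coincident $J$ contains node $2$.

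I expect this minimality step to be the main obstacle, precisely because it must be made uniform: the trivial-opposition dichotomy disposes of $\mathsf{E_7},\mathsf{E_8},\mathsf{D}_{2m}$ and the leaf-tracking disposes of the families $\mathsf{A}_r$ and $\mathsf{D}_{2m+1}$, but the exceptional flip case $\mathsf{E_6}$ and the parity split in $\mathsf{D}_n$ require individual inspection to rule out a proper competitor. As an alternative route to minimality I would instead invoke \cref{MR1} and \cref{MR2} together with the remark that polar-closedness of $J$ is inherited by each $I\setminus K$: for self-opposite $J$ the reformulated condition is exactly the ``locally polar-closed'' requirement governing $n(J)=1$, so a coincident $J$ is polar closed, and every polar closed set contains the first polar peel, which is $J_0$. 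Reconciling the two routes — the diagrammatic forcing and the projectivity-group criterion of Theorems B and C — is what yields the characterisation, and with it the advertised new combinatorial description of the polar type.
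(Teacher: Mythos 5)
Your preliminary analysis gets two things right that are genuinely delicate: the reading of ``coincides'' as agreement of $\equiv$ with the componentwise residual opposition on \emph{all} of $I\setminus J$ (so that singleton components must be $\equiv$-fixed) is indeed forced, since under the weaker literal reading $J=\{2\}$ in type $\mathsf{A_3}$ would qualify vacuously and beat the polar type $\{1,3\}$; and your sufficiency check that each polar type has the property, as well as your leaf-tracking proof that in type $\mathsf{A}_r$ every qualifying $J$ contains $\{1,r\}$, are correct.

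The genuine gap is the minimality step for all the other types: the claim that ``$J_0$ is contained in every nonempty $J$ satisfying the condition'' is false, so no argument can establish it. In $\mathsf{D_4}$ take $J=\{1,3,4\}$: then $I\setminus J=\{2\}$ is a single singleton component with $2^\equiv=2$, so $J$ satisfies the condition under your own reading (indeed under any reasonable reading), yet $2\notin J$. The same happens whenever $J$ chops $I\setminus J$ into $\equiv$-fixed singletons, e.g.\ $J=\{3,4,6\}$ in $\mathsf{E_7}$ (leaving the fixed singletons $1,2,5,7$), or $J=I\setminus\{2\}$ in any $\mathsf{D}_n$ or in $\mathsf{E_6}$. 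Consequently ``unique smallest'' can only mean ``unique subset of minimum cardinality''; it cannot mean inclusion-minimum. Under that reading your single-node-deletion checks are exactly what is needed for $\mathsf{D}_n,\mathsf{E_6},\mathsf{E_7},\mathsf{E_8}$ (the polar type is a singleton, so one only needs that no \emph{other} singleton qualifies), but the conclusions you actually draw from them --- ``such a $J$ must contain $J_0$'' in the trivial-opposition cases, ``so $2\in J$'' for $\mathsf{D}_n$, and ``every nonempty coincident $J$ contains node $2$'' for $\mathsf{E_6}$ --- are non sequiturs: deleting one non-polar node always leaves a big component on which the two oppositions disagree, but deleting several nodes need not leave any big component at all. (Your $\mathsf{D}_n$ argument in fact only shows that $2\notin J$ forces $1\in J$, and then stops; also, deleting node $7$ of $\mathsf{E_7}$ or node $1$ of $\mathsf{D}_{2m}$ leaves $\mathsf{E_6}$, resp.\ $\mathsf{D}_{2m-1}$, not an $\mathsf{A}_k$, though the conclusion still holds there.) The same false containment sinks your alternative route, since ``a coincident $J$ is polar closed'' already fails for $J=\{1,3,4\}$ in $\mathsf{D_4}$, which is coincident but not polar closed. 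Note finally that the paper does not argue combinatorially at all: it presents the corollary as a consequence of \cref{MR1} and \cref{MR2} together with the remark that polar closedness of $J$ is inherited by $I\setminus K$ for each component $K$ of $I\setminus J$. Your direct diagrammatic approach is legitimate in principle, but it must be restructured around the minimum-cardinality reading (check singletons, plus pairs in type $\mathsf{A}_r$) to become a proof.
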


\cref{MCor} does not hold in the non-simply laced case (since opposition does not determine the direction of the arrow in the Dynkin diagram). Indeed, for types $\mathsf{B}_n$, $\mathsf{C}_n$ and $\mathsf{F_4}$, there are each time two single types satisfying the given condition, reflecting the fact that, in characteristic $2$, there are really two choices. 

Finally, we consider the case left out in \cref{MR2} above, where $I\setminus J$ has only connected components of rank $1$. We reduce the action of $\Pi^+(F)$ on each panel to a case where $|I\setminus J|=1$ and show: 

\begin{theo}\label{MR3} Let $\Delta$ be an irreducible spherical building of simply laced type with type set $I$. Let $J\subseteq I$ with $|I\setminus J|=1$, and let $P$ be a panel of type $J$. Then $\Pi^+(P)$ is permutation equivalent to the natural action of $\PGL_2(\K)$ on the projective line $\PG(1,\K)$, and equals $\Pi(P)$. 
\end{theo}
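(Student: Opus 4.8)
The plan is to read off $\Pi^+(P)$ from \cref{MR0} and then compute the induced group explicitly via the Levi decomposition. Since $|I\setminus J|=1$, write $\{i\}=I\setminus J$; the residue $\Res_\Delta(P)$ is a thick rank-$1$ building of type $\{i\}$, whose vertices are the type-$i$ vertices completing $P$ to a chamber. Choosing a neighbour $j$ of $i$ in the (connected) diagram and the subsimplex $Q\subseteq P$ of cotype $\{i,j\}$, the residue $\Res_\Delta(Q)$ is a Desarguesian projective plane over $\K$, inside which $\Res_\Delta(P)$ is a pencil; hence $\Res_\Delta(P)\cong\PG(1,\K)$. By \cref{MR0}, $\Pi^+(P)$ is permutation equivalent to the group induced on this line by $\Aut^+(\Delta)_P=G_P$, and by \cref{L1} the unipotent radical $U_P$ acts trivially there, so $G_P$ and its Levi subgroup $L_P$ induce the same group; it therefore suffices to determine the action of $L_P$ on $\PG(1,\K)$.

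Next I would use the Chevalley description of $L_P$ (the explicit matrix description preceding \cref{L1} covers the linear case directly). As $P$ has cotype $\{i\}$, the derived group of $L_P$ is the rank-$1$ group $\langle U_{\alpha_i},U_{-\alpha_i}\rangle$ attached to the simple root $\alpha_i$, and this induces $\PSL_2(\K)$ on the line, while the torus $T=\langle h_\alpha(t):\alpha\in\Phi,\ t\in\K^*\rangle$ fixes the two $T$-stable vertices and scales the natural affine coordinate. The decisive point is that these scalings sweep out all of $\K^*$, not merely the squares. A torus element $h_{\alpha_j}(t)$ acts on the $\alpha_i$-line by the factor $t^{a_{ij}}$, where $a_{ij}$ is the (symmetric) Cartan integer; since the diagram is connected and $|I|\ge 2$, the node $i$ has a neighbour $j$, for which $a_{ij}=-1$, so $h_{\alpha_j}(t)$ scales by $t^{-1}$ and covers $\K^*$. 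Thus $L_P$ induces $\PSL_2(\K)$ together with the full diagonal torus $\{\diag(s,1):s\in\K^*\}$ of $\PGL_2(\K)$, and these generate $\PGL_2(\K)$; as $L_P$ acts $\K$-linearly the induced group is contained in $\PGL_2(\K)$, so it equals it. In the linear model the same fact is the elementary observation that the determinant of the $2\times2$ block may be prescribed arbitrarily by adjusting one of the remaining diagonal entries of the Levi matrix.

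For the final assertion $\Pi(P)=\Pi^+(P)$ I would argue by sandwiching. Each perspectivity $\proj^{F}_{F'}$ between rank-$1$ residues is the canonical projection and is $\K$-linear, hence every self-projectivity of $P$, of whatever parity, lies in $\PGL_2(\K)$; thus $\Pi(P)\le\PGL_2(\K)$. Together with $\Pi^+(P)\unlhd\Pi(P)$ and the equality $\Pi^+(P)=\PGL_2(\K)$ just proved, this forces $\Pi^+(P)=\Pi(P)=\PGL_2(\K)$.

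The main obstacle is precisely the passage from $\PSL_2(\K)$ to $\PGL_2(\K)$: the little projective group is generated by root elations (transvections), and a priori one might fear it induces only $\PSL_2(\K)$ on the residue; the real content is that the neighbouring node supplies, through its torus, the missing coset of $\PGL_2(\K)/\PSL_2(\K)\cong\K^*/(\K^*)^2$, which is where connectedness of the diagram and the value $a_{ij}=-1$ enter. A secondary care point is checking that no semilinear maps arise, that is, that the identification with $\PG(1,\K)$ is over the identity field automorphism and that the relevant scalars are genuinely unconstrained.
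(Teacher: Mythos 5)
Your treatment of the first assertion, $\Pi^+(P)\cong\PGL_2(\K)$ in its natural action, is correct, but it takes a genuinely different route from the paper. The paper never computes with the torus: it proves a transfer lemma (\cref{gate2}, packaged as \cref{corgate}) showing that the special projectivity group of $P\setminus Q$ computed inside a residue $\Res_\Delta(Q)$, for $Q\subseteq P$ of cotype $\{i,j\}$, embeds into $\Pi^+(P)$, and then quotes the classical fact that in a Moufang projective plane the projectivity group of a pencil is $\PGL_2(\L)$ in its natural action; the upper bound is the (implicit) observation via \cref{MR0} that the little projective group acts linearly on residues. Your lower bound instead comes from the explicit Levi computation, where the decisive input is that $h_{\alpha_j}(t)$ for a neighbouring node $j$ scales the affine coordinate by $t^{-1}$ and so supplies the nonsquare determinant classes missing from $\PSL_2(\K)$. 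Both mechanisms are sound; the paper's has the advantage of covering type $\mathsf{A}_r$ over a skew field by the same sentence, which you must treat by a separate matrix argument, while yours isolates transparently where $\PSL_2$ gets promoted to $\PGL_2$.

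The second assertion, $\Pi(P)=\Pi^+(P)$, is where your proposal has a genuine gap: the sentence claiming that each perspectivity ``is the canonical projection and is $\K$-linear'' asserts exactly the point at issue rather than proving it. The residues are bare sets; their $\PG(1,\K)$-structures are imposed by identifications (in your set-up, through plane residues at $F$ and at $F'$ separately), and nothing formal forces the building-theoretic projection to carry one identification to the other linearly rather than semilinearly. What one gets for free is that a perspectivity is equivariant under the rank-one group $\langle U_{\alpha},U_{-\alpha}\rangle$ attached to a root whose wall contains both panels; since two opposite root groups generate the little projective group of the Moufang set on a panel residue, this only places $\Pi(P)$ inside $\mathsf{P\Gamma L}_2(\K)$, and what must be excluded is precisely an index-$2$ extension by a semilinear involution. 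That this exclusion has real content is shown by the rank-$2$ situation the paper itself recalls from Chapter~8 of \cite{Mal:98}: for non-Pappian Moufang polygons one of the two panel types has $n=2$, i.e.\ odd self-projectivities there are genuinely not induced by the linear group. Nor can you shortcut via \cref{MR1}, since e.g.\ the panel of cotype $\{1\}$ in type $\mathsf{E_7}$ or $\mathsf{E_8}$ has self-opposite type that is \emph{not} polar closed. A repair within your framework is available but requires an argument, for instance: when $\{i\}$ is self-opposite, $w_0s_i$ fixes $\pm\alpha_i$, so a representative $n$ of $w_0s_i$ in the Chevalley group maps $P$ to an opposite panel, normalizes $U_{\pm\alpha_i}$ and acts on them by signs; comparing the $n$-induced map with the perspectivity (both equivariant up to these linear twists) and using that the centraliser of a $2$-transitive group is trivial forces the perspectivity to be linear. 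Without some such step, the equality $\Pi(P)=\Pi^+(P)$ remains unproven in your write-up.
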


In view of \cref{MR3}, one could expect that the general and special projectivity groups of simplices, whose residue is isomorphic to $\mathsf{A}_r(\K)$, are isomorphic to $\PGL_{r+1}(\K)$. This is indeed in most cases true, but not always. If it is not true, then necessarily the residue in question is not contained in a larger residue of type $\mathsf{A}_{r+1}$. Our last main result determines the exact permutation representations of the special and general projectivity groups on the corresponding residues of the building.

\begin{theo}\label{MR4} 
Let $\Delta$ be an irreducible spherical building of simply laced type with type set $I$. Let $I\neq J\subseteq I$ with $I\neq I\setminus J$ connected and let $F$ be a simplex of type $J$. Then $\Pi^+(F)$ and $\Pi(F)$ are 
\begin{compactenum}[$(i)$]
\item isomorphic to $\PGL_n(\L)$ in its natural action, if $\Delta$ has type $\mathsf{A}_r$, $r\geq 2$, it is defined over the skew field $\L$, and $|I\setminus J|=n-1$;
\item as in \emph{\cref{Drtabel}} and \emph{\cref{Etabel}} for $\typ(\Delta)\in\{\mathsf{D}_r,\mathsf{E}_m\mid r\geq 4, m=6,7,8\}$.
\end{compactenum}
\end{theo}

The notation used in Tables~\ref{Drtabel} and~\ref{Etabel} is explained in \cref{allgroups}, where \cref{MR4} is proved. 

\subsection{Lie incidence geometries}
Some arguments --- in particular those in \cref{allgroups} --- will be more efficiently carried out in a specific point-line geometry related to the spherical building in question.  We provide a brief introduction here. More details can be found in textbooks like \cite{Bue-Coh:13} and \cite{Shu:11}.

\subsubsection{Point-line geometries, projective spaces, polar spaces and parapolar spaces} Recall that a \emph{point-line} geometry $\Gamma=(X,\cL)$ consists of a set $X$, whose elements are called \emph{points}, and a subset $\cL$ of the full set of subsets of $X$, whose members are called \emph{lines} (hence we disregard geometries with so-called repeating lines). The notion of \emph{collinear points} will be used frequently. We denote collinearity of two points $x$ and $y$ with $x\perp y$, and $x^\perp$ has the usual meaning of the set of points collinear to $x$ (including $x$, if there exists a line containing $x$).  A \emph{(proper) subspace} is a (proper) subset of the point set intersecting each line in either 0,1 or all of the points of the line. A \emph{(proper) hyperplane} is a (proper) subspace intersecting each line non-trivially. The \emph{point graph} of $\Gamma$ is the graph with vertices the points, adjacent when collinear. A subspace is \emph{convex}, if its induced subgraph in the point graph is convex (all vertices on paths of minimal length between two vertices of the subspace are contained  in the subspace). We will frequently regard a subspace as a subgeometry in the obvious way. A subspace is called \emph{singular}, if every pair of points in it is collinear. In our cases, singular subspaces will always be projective spaces. Lines and planes are short for $1$- and $2$-dimensional projective (sub)spaces, respectively.   

The \emph{distance} between points is the distance in the point graph and the \emph{diameter} of the geometry is the diameter of the point graph. 

We usually require that $\Gamma$ is \emph{thick}, that is, each line contains at least three points.

For example, the $1$-spaces of any vector space $V$ of dimension at least $3$ over some skew field $\L$, form the point set of a thick geometry $\PG(V)$, a generic line of which consists of all the $1$-spaces contained in a given $2$-space. This geometry is a projective space. The hyperplanes correspond to the codimension $1$ subspaces of $V$.  

A \emph{polar space} is a thick point-line geometry, such that for each point $x$, the set $x^\perp$ is a hyperplane (which we require to be distinct from the whole point set). 

A pair of points of a point-line geometry $\Gamma$ is called \emph{special}, if they are not collinear and there is a unique point of $\Gamma$ collinear to both. Then $\Gamma$ is called a \emph{parapolar space}, if every non-special pair of points at distance at most $2$ is contained in a convex subspace isomorphic to a polar space. Such convex subspaces are called \emph{symplecta}, or \emph{symps} for short. A pair $p,q$ of non-collinear points of a symp is called \emph{symplectic}; in symbols $p \pperp q$. 

Given an irreducible spherical building $\Delta$ of rank $r$ at least $2$ of type $\mathsf{X}_r$ over the type set $I$, let $J\subseteq I$ and define $X$ as the set of all simplices of $\Delta$ of type $J$. The set $\cL$ of lines consists of the sets of simplices of type $J$ completing a given panel, whose type does not contain $J$, to a chamber. The geometry $(X,\cL)$ is usually referred to as the \emph{Lie incidence geometry of type $\mathsf{X}_{r,J}$} (where we replace $J$ by its unique element, if $|J|=1$). The main observation here (see the above references), usually referred to as \emph{Cooperstein's theory of symplecta} \cite{Coo:76,Coo:77}, is that $(X,\cL)$ is either a projective space, a polar space, or a parapolar space. 

In the present paper, we will only use projective spaces over arbitrary skew fields (they are related to buildings of type $\mathsf{A}_r$), polar spaces (that are related to buildings of type $\mathsf{D}_r$),  some specific parapolar spaces that are related to buildings of types $\mathsf{E_{6}}$ and $\mathsf{E_{7}}$ over a field $\K$, and, at the end of this section, the thin exceptional long root subgroup geometries, which can be seen as parapolar spaces with special pairs of points and diameter 3. Polar spaces related to buildings of type $\mathsf{D}_r$ will usually be called \emph{polar spaces of type $\mathsf{D}_r$}, or \emph{hyperbolic} polar spaces, since in rank $r\geq 4$, they are in one-to-one correspondence to hyperbolic quadrics in projective spaces. Recall that a \emph{hyperbolic quadric} is the projective null set of a quadratic form of maximal Witt index in a vector space $V$ of even dimension. The standard form (using coordinates $x_{-r},\ldots,x_{-1},x_1,\ldots, x_r$) is given by \[x_{-r}x_r+x_{-r+1}x_{r-1}+\cdots x_{-2}x_2+x_{-1}x_1.\] 
The automorphisms of $\Delta$ induced by elements of $\PGL(V)$ will be called \emph{linear}. They conform to the elements of the corresponding (maximal) linear algebraic group. Note that hyperbolic quadrics contain two natural classes of maximal singular subspaces, characterised by the fact that members of distinct classes intersect in subspaces of odd codimension (the \emph{codimension} is the vector dimension of a complementary subspace).

Concerning types $\mathsf{E}_r$, $r=6,7,8$, we list some basic properties of the Lie incidence geometries of types $\mathsf{E_{6,1}}$ and $\mathsf{E_{7,7}}$,  that we will make use of, in \cref{propE6E7}. 

We end this section with the following lemma, whose proof makes use of thin exceptional long root subgroup geometries.

\begin{lem}\label{centreroot}
Let $\alpha$ be a root of an irreducible spherical Coxeter complex $\Sigma(W,S)$. Let $F$ be the centre of $\alpha$. Let $v$ be a vertex joinable to a vertex $u$ that is joinable to $F$. Then $v$ lies in $\alpha$.
\end{lem}

\begin{proof}
For the classical types $\mathsf{A}_n,\mathsf{B}_n$ and $\mathsf{D}_n$, this is easily verified. Indeed, for type $\mathsf{A}_n$, viewing $\Sigma(W,S)$ as the point-line geometry of a projective space with two points per line, $F$ is an incident point-hyperplane pair. Then $v$ is either incident with the point, or with the hyperplane and the result follows. For types $\mathsf{B_n}$ and $\mathsf{D}_n$, we view $\Sigma(W,S)$ as a polar space with two points per line. There are two possibilities: either $F$ is a point, and then $v$ is a subspace collinear to $F$ and the assertion again follows, or $F$ is a line, and then $v$ is either a subspace collinear to $F$, or a subspace containing a point of $F$. In both cases the assertion follows. 

Now let $\Sigma(W,S)$ have exceptional type. The rank 2 case is easy to check, so we may assume the type is $\mathsf{E}_i$, $i=6,7,8$, $\mathsf{F_4}$ or $\mathsf{H}_i$, $i=3,4$. The latter cases are not essential for us, as they do not correspond to thick buildings and so we leave this to the reader. In the other cases, we use the representation of $\Sigma(W,S)$ as \emph{thin long root subgroup geometry} $\Gamma$, that is, the geometry where points are the long roots of the corresponding root system, and lines (edges if one considers this geometry as a graph) given by pairs of roots making an angle of sixty degrees, see also \cite{Bue-Coh:13}. Such geometries are depicted for all exceptional types in \cite{Mal-Vic:19}. The advantage of this description is that $F$ is a point of this geometry (for type $\mathsf{F_4}$ one has also to consider the same construction with short roots, which gives an isomorphic geometry). Also, $\alpha$ is induced by all points corresponding to roots making an angle of at most $90$ degrees with the root corresponding to $F$, or, in other words, points collinear or symplectic to $F$. Now, every vertex of $\Sigma(W,S)$ corresponds to either a singular subspace of   $\Gamma$, to a symplecton of $\Gamma$, or a convex subspace isomorphic to a Lie incidence geometry containing no special pairs and having diameter 2. It is now clear that, if not both $u$ and $v$ are convex subspaces distinct from symplecta, then the assertion follows (as $v$ is only incident with points collinear or symplectic to $F$). The only case where both $u$ and $v$ are convex subspaces occurs for type $\mathsf{E_6}$, where, up to duality, $u$ is a vertex of type $1$ and $v$ of type $6$ (the corresponding convex subspaces are geometries of type $\mathsf{D_{5,5}}$). Since $u$ and $v$ are incident, they share a symp, and we may assume that symp is opposite $F$ in the convex subspace $u$. Then one verifies that $v$ contains four points collinear to $F$, eight points symplectic to $F$ and four points special to $F$,   This is a symmetric configuration with respect to $F$ and its opposite point, hence $u$ lies in $\partial\alpha$. 

The lemma is proved.
\end{proof}

\subsection{A connectivity theorem}
We will also need the connectivity of the subgeometry of a Lie incidence geometry of type $\mathsf{E_{6,2}}$, $\mathsf{E_{7,1}}$ or $\mathsf{E_{7,3}}$ induced by the points opposite two given points of the geometry. In an earlier version of the current paper, we proved this inside certain relevant parapolar spaces. The referee made us aware of a more general approach valid for all Lie incidence geometries defined in spherical buildings of simply laced type.  We present this approach here. Hence, the aim of this subsection is to prove the following proposition, which more generally also holds for twin buildings of simply laced type, but since we did not define these, we do not insist. 

\begin{prop}\label{connopp}
Let $\Delta$ be an irreducible spherical building of simply laced type such that each panel is contained in at least four chambers. Let $C,C'$ be two arbitrary chambers of $\Delta$. Then the subgraph $\Gamma^{C,C'}$ of the chamber graph $\Gamma$ induced on the set of chambers opposite both $C$ and $C'$ is connected.  
\end{prop}

The proof we present is rather similar to the proof of \cite[Theorem~5.1]{Muh-Ron:95}, which is essentially the case $C=C'$ of \cref{connopp}. So, we first verify the latter for rank 2 residues.

\begin{lem}\label{connopp2}
Let $\Delta$ be a spherical building of type $\mathsf{A_1\times A_1}$ or $\mathsf{A_2}$ such that each panel is contained in at least four chambers. Let $C,C'$ be two arbitrary chambers of $\Delta$. Then the subgraph $\Gamma^{C,C'}$ of the chamber graph $\Gamma$ induced on the set of chambers opposite both $C$ and $C'$ is connected.  
\end{lem}

\begin{proof}For $C=C'$, this is straightforward, hence assume that $C\neq C'$.

For type $\mathsf{A_1\times A_1}$, the chamber graph is a grid, and hence the said subgraph is a subgrid, which is always connected.

Suppose now $\Delta$ stems from a projective plane with at least $4$ points per line. First suppose each line has exactly $4$ points. Then one verifies easily that, if $C$ and $C'$ are adjacent, then $\Gamma^{C,C'}$  is the chamber graph of a $3\times3$ grid. 
If $C$ and $C'$ have distance 2 in the chamber graph, then one verifies that $\Gamma^{C,C'}$ is a cycle of length $18$, hence connected. Finally, if $C$ and $C'$ are opposite, then $\Gamma^{C,C'}$ consists of four $3$-cliques $\{a_{-2},a_{-1},a_0\}$, $\{a_0,a_1,a_2\}$, $\{b_{-2},b_{-1},b_0\}$ and $\{b_0,b_1,b_2\}$ and edges $\{c_i,b_i\}$ and $\{c_i,a_i\}$, $i\in\{-2,-1,1,2\}$.

So, we may suppose that each line has at least five points. Let $D,D'$ be two chambers both opposite both $C$ and $C'$. We observe that, if $D$ and $D'$ have distance $2$, then the unique chamber $E$ adjacent to both $D$ and $D'$ is also opposite both $C$ and $C'$. (This is most easily seen considering chambers as flags of the corresponding projective plane. Indeed, then $E$ consists of a point of either $D$ and $D'$, and a line of either $D'$ or $D$, respectively. Hence, since $C$ is opposite both $D$ and $D'$, its elements are not incident and do not coincide with any of the elements of $D$ and $D'$, and hence neither with the elements of $E$.) Hence we may assume that $D$ and $D'$ are opposite. Let $S$ be the set of chambers sharing their point with $D$ and let $S'$ be the set of chambers sharing their line with $D'$; note that $S$ and $S'$ are opposite panels. For each chamber $B\in S$ there exists a unique chamber $B'\in S'$ at distance $2$ in the chamber graph. Let $C_B$ be the unique chamber adjacent to both $B$ and $B'$. Then we observe (with a similar proof as our first observation above) that any chamber opposite both $D$ and $D'$ is not opposite at most two members of $U=\{C_B\mid B\in S\}$. In particular, $C$ is not opposite at most two members of   $U$, and $C'$ is not opposite at most two members of $U$. Consequently, there is at least one member of $U$ opposite both $C$ and $C'$ and our first observatin now implies that there is a path from $D$ to $D'$ inside $\Gamma^{C,C'}$. 
\end{proof}

We can now prove \cref{connopp}. Let $D$ and $D'$ be two chambers belonging to $\Gamma^{C,C'}$.  Let $\gamma$ be a path in $\Gamma^{C,C}$ connecting $D$ with $D'$ such that, among all such paths, the minimal distance $d$ from $C'$ to any member of $\Gamma$ is the highest, and the number $n$ of chambers attaining that minimal distance is smallest.    We claim that $d$ is the diameter of $\Gamma$, which shows that $\gamma$ is inside $\Gamma^{C,C'}$. Indeed, suppose there are elements of $\gamma$ not opposite $C'$ and let $D_1$ be the first chamber of $\gamma$ having distance $d$ to $C'$. Let $D_0$ be the element of $\gamma$ preceding $D_1$ and $D_2$ the one following $D_1$. Then our assumptions imply that $F=D_0\cap D_1\cap D_2$ has corank~2, and so $R:=\Res_\Delta(F)$ is a rank 2 building corresponding to either a generalised digon or a projective plane.  Since all elements of $\gamma$ are oposite $C$, the projection $C_R$ of $C$ onto $R$ is opposite all of $D_0,D_1,D_2$. Let $C_R'$ be the projection of $C'$ onto $R$. Let $n_i$, $i=0,1,2$, be the distance in the chamber graph of $R$ from $C_R'$ to $D_i$. Since projections of chambers onto panels are unique, it is straightforward to find paths $\gamma_i$, $i=1,2$, of length $2-n_i$ (if $R$ corresponds to a generalised digon) of $3-n_i$ (if $R$ corresponds to a projective plane) connecting $D_i$ with a chamber of $R$ opposite $C_R'$. Then, using \cref{connopp2}, we obtain a path in $R$ connecting $D_0$ with $D_2$ having one chamber less at distance $d_1$ from $C_R'$, and all other chambers have distance at least $d_1+1$. Replacing $D_0,D_1,D_2$ by this path, we obtain a path in $\Gamma^{C,C}$ with either higher minimal distance $n$, or less chambers at that minimal distance, a contradiction. The proposition is proved. \qed

\section{General observations and proof of \cref{MR0}}

We start this section with a simple, though important observation, used in both \cite{Kna:88} and Chapter 8 of \cite{Mal:98}, but not explicitly stated in either. We provide a proof for completeness. 

\begin{obs}\label{O1}
Let $\Delta$ be a spherical building over the type set $I$ and let $J\subseteq I$ be self-opposite. Let $F$ be a simplex of type $J$. Then $n(J)=1$ if, and only if,  the identity in $\Pi(F)$ can be written as the product of an odd number of perspectivities. 
\end{obs}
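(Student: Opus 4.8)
The plan is to track the \emph{length} (the number of perspectivities) of a self-projectivity modulo $2$ and reduce the statement to a bookkeeping argument, the only genuinely geometric input being the existence of a single odd-length self-projectivity. Recall that if $g,h\in\Pi(F)$ are represented by sequences of consecutively opposite simplices $F=A_0,\dots,A_m=F$ and $F=B_0,\dots,B_n=F$, then the composite $g\circ h$ is the self-projectivity attached to the concatenation of the two sequences, so its length is $m+n$; in particular the even-length self-projectivities form exactly $\Pi^+(F)$, and the inverse $g^{-1}$ (the reversed sequence) has the same length parity as $g$. Writing $\ell(g)$ for the length of a chosen representative, the assignment $g\mapsto \ell(g)\bmod 2$ is additive on representatives.

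First I would isolate the key lemma: \emph{for $J$ self-opposite there exists an odd-length self-projectivity of $F$}. Choose a chamber $C_1\supseteq F$ and two further chambers $C_2,C_3$ so that $C_1,C_2,C_3$ are pairwise opposite. If $C_i$ and $C_j$ are opposite, the vertex of type $t$ of $C_i$ is opposite the vertex of type $t^\equiv$ of $C_j$; since $J^\equiv=J$, the type-$J$ face $F_i$ of $C_i$ is therefore opposite the type-$J$ face $F_j$ of $C_j$. With $F_1=F$ this yields three pairwise opposite simplices $F=F_1,F_2,F_3$ of type $J$, and $\proj^{F_3}_{F}\circ\proj^{F_2}_{F_3}\circ\proj^{F}_{F_2}$ is a self-projectivity of $F$ of length $3$. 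The one nontrivial point is the existence of three pairwise opposite chambers in a thick spherical building; this follows from thickness, for instance from the sharply transitive action of the unipotent radical $U_{C_1}$ on the chambers opposite $C_1$ provided by \cref{L1}, a generic element of which moves $C_2$ to a chamber opposite both $C_1$ and $C_2$.

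Granting the lemma, the two implications are short. If the identity is a product of an odd number of perspectivities, say $\id=g$ with $\ell(g)$ odd, then for any $h\in\Pi(F)$ of odd length we have $h=g\circ h$ as permutations and $\ell(g\circ h)=\ell(g)+\ell(h)$ is even, so $h\in\Pi^+(F)$; hence $\Pi(F)=\Pi^+(F)$ and $n(J)=1$. Conversely, suppose the identity is \emph{not} an odd product. If $g_1,g_2\in\Pi(F)$ coincide as permutations, then $g_1\circ g_2^{-1}=\id$ has length $\ell(g_1)+\ell(g_2)$, which by assumption is even, so $\ell(g_1)\equiv\ell(g_2)\pmod 2$. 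Thus $g\mapsto \ell(g)\bmod 2$ is a well-defined homomorphism $\Pi(F)\to\Z/2\Z$ with kernel exactly $\Pi^+(F)$; by the key lemma it is surjective, so $\Pi^+(F)\subsetneq\Pi(F)$ and $n(J)=2$. Since $n(J)\in\{1,2\}$, combining the two implications gives $n(J)=1$ if and only if the identity is an odd product.

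The main obstacle is the key lemma, i.e.\ exhibiting an odd-length self-projectivity; everything else is formal parity bookkeeping. Without it the statement would fail: if no odd self-projectivity existed, every self-projectivity would be even, forcing $n(J)=1$ while the identity would admit no odd representation. A secondary, routine point to record is that transporting along a projectivity $F\to F'$ of length $m$ conjugates $\Pi(F)$ onto $\Pi(F')$ and alters the length of any self-projectivity by $2m$, hence preserves parity; this confirms that ``being an odd product'' depends only on the type $J$, consistently with $n(J)$ depending only on $J$.
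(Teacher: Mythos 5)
Your proof is correct and follows essentially the same parity-bookkeeping argument as the paper: in one direction, composing the odd-length representation of the identity with an odd-length representation of $h\in\Pi(F)$ yields an even-length representation of $h$; in the other, one composes an odd self-projectivity with an even-length representation of its inverse (your contrapositive formulation via the parity homomorphism $\Pi(F)\to\Z/2\Z$ is the same argument in different clothing). The one substantive addition is that you isolate and prove the key existence statement --- that a self-opposite type $J$ admits a length-$3$ self-projectivity, obtained from three pairwise opposite chambers --- which the paper's proof uses only implicitly when it says ``consider any odd projectivity $\theta$''; making this explicit is a genuine improvement, since, as you observe, the Observation would be false for a type admitting no odd self-projectivity at all. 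The only weak point is your suggested justification of that existence: \cref{L1} applies only to Moufang buildings of simply laced type (whereas the Observation concerns arbitrary spherical buildings), and the phrase ``a generic element of $U_{C_1}$ moves $C_2$ to a chamber opposite both'' is circular as stated --- sharp transitivity of $U_{C_1}$ on the chambers opposite $C_1$ does not by itself guarantee that one of those chambers is opposite $C_2$. The correct reference is Proposition~3.30 of \cite{Tits:74}, valid for every thick spherical building, which is exactly the statement the paper itself invokes (and generalises in \cref{3.30}) for this purpose; with that substitution your argument is complete.
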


\begin{proof}
If the identity in $\Pi(F)$ can be written as the product of an odd number of perspectivities, then, by composing this product with any even projectivity, we see that we can write any putative member of $\Pi(F)\setminus \Pi^+(F)$ as a product of an even number of perspectivities, that is, as a member of $\Pi^+(F)$, a contradiction. We conclude $\Pi^+(F)=\Pi(F)$ in this case. 

Conversely, if $\Pi^+(F)=\Pi(F)$, then consider any odd projectivity $\theta$. Our assumption implies that we can write $\theta^{-1}$ as an even projectivity. Composing those two products of perspectivities, we obtain the identity written as the product of an odd number of perspectivities.  
\end{proof}

We can now prove \cref{MR0}. 

\emph{Proof of \cref{MR0}}.
(I) First we want to show that every even self-projectivity of $\Res(F)$ is induced by a product of elations that stabilises $F$. In fact, we are going to show that any even projectivity $$ \theta \colon \Res(F) \rightarrow \Res(T), $$ that maps $F$ to a simplex $T$, is induced by an elation. Since self-projectivities are products of projectivities, it then follows that every even self-projectivity is induced by a product of elations that stabilises $F$.

So let $\theta \colon \Res(F) \rightarrow \Res(T)$ be an even projectivity that maps $F$ to a simplex $T$. It suffices to prove the assertion for the case that $\theta$ is a product of two perspectivities. Then there exists a simplex $R$ opposite both $F$ and $T$, such that $\theta =  \proj_{T}^{R} \circ \proj_{R}^{F}$. Since $\Delta$ is Moufang, it follows with \cref{L1} that there exists an elation $g$, which maps $F$ to $T$ and fixes $R$ pointwise. For an element $f$ in $\Res(F)$, $f^{g}$ is exactly the projection of $\proj_{R} (f)$ onto $T$, since elations preserve incidence. That means $g_{|_{\Res(F)}} = \proj_{T}^{R} \circ \proj_{R}^{F}$. 

(II) Now let $g\colon \Delta \rightarrow \Delta$ be a central elation. 
Let $c$ be the centre of any root corresponding to $g$. Let $T$ be a simplex either containing $c$ or joinable to it. Then, by \cref{centreroot}, every vertex $u$ of $\Delta$ joinable to $T$ is contained in a root with centre $c$ (consider an apartment containing $\{c\}\cup T$ and $\{u\}\cup T$). It follows that $g$  fixes $\Res(T)$ pointwise and moves a simplex $F$ of the same type to a simplex $F^{g}$. First, we claim that the restriction $g_{|_{\Res(F)}}$ is an even projectivity from $\Res(F)$ to $\Res(F^{g})$.

Since $\Delta$ is Moufang, $\Delta$ is thick and therefore there exists a simplex $R$ in $\Delta$ opposite both $F$ and $T$. Since elations preserve incidence, the image $R^{g}$ is opposite both $F^{g}$ and $T^{g}=T$. 

Now for every $f \in \Res(F)$ we have:
\begin{equation*}
f^{g} = \proj_{F^{g}}^{R^{g}} \circ \proj_{R^{g}}^{T}  \circ \proj_{T}^{R}  \circ \proj_{R}^{F}(f), 
\end{equation*}
proving the claim. For an element $h$ of the little projective group that stabilises $F$, $h$ is a product of central elations and every such central elation gives rise to an even projectivity like above.
\qed

\section{Projective spaces}
In this section we completely settle the case of type $\mathsf{A}_r$ regarding the number $n(J)$. The proof will also contain a warm up for a general statement we will prove later on, see \cref{indlemma}. The main reason for treating this case separately, is that we can provide an elementary proof only using projective geometry independent from building-theoretic notions (we do refer to \cref{Tits}, but this can easily be verified for projective spaces). 
\begin{theorem}\label{caseAn}
For buildings of type $\mathsf{A}_r$ with type set $I$ and $J\subseteq I$, we have $n(J)=1$ if, and only if, either $J^\equiv\neq J$, or $|J|=2k$, for some $k\leq \frac{r-1}{2}$ and $J=\{1,2,\ldots,k,r-k+1,r-k+2,\ldots,r\}$, that is, $J$ is polar closed. 
\end{theorem}

\begin{proof}
First note that, for both the ``if" and the ``only if" parts, we may assume that $J$ is self-opposite. First suppose $n(J)=1$. Let $F$ and $F'$ be two opposite simplices of type $J$.  Let $j\in I\setminus J$ be minimal with respect to the Bourbaki labelling of the diagram and let $v$ be a vertex of type $j$ incident to $F$. Since $J$ is self-opposite,  $j\leq\frac{r}{2}$. Then according to \cref{Tits}, the type $j'$ of $\proj^{F}_{F'}(v)$ is the opposite type in $\Res_{\Delta}(F')$ of type $r+1-j$ (which belongs to $I\setminus J$ since $I\setminus J$ is self-opposite). If $n(J)=1$, we should have $j=j'$. This is only possible, if the integer interval $[j,r+1-j]$ belongs to $I\setminus J$. Putting $k=j-1$, we obtain the ``only if'' part of the statement.

Now we show the ``if'' part. We establish the identity projectivity as a product of three perspectivities. Let $F$ be any simplex of type $J$. Suppose $F=\{U_i\mid i\in J, \dim U_i=i-1\}$. Note that, since $F$ is a simplex, $U_i\leq U_j$ for $i\leq j$, with $i,j\in J$. Select a simplex $F'$ opposite $F$ and set $F'=\{U'_i\mid i\in J, \dim U'_i=i-1\}$. Choosing a basis $\{p_0,p_1,\ldots,p_r\}$ well, we may assume $U_i=\<p_0,\ldots,p_{i-1}\>$ and $U_i'=\<p_r,p_{r-1},\ldots,p_{r-i+1}\>$. Let, for $0\leq i\leq k-2$, the point $q_i$ be an arbitrary point on the line $\<p_i,p_{r-i}\>$ distinct from both $p_i$ and $p_{r-i}$. Define $U''_i=\<q_0,\ldots,q_{i-1}\>$, for $1\leq i\leq k-1$, and $U''_i=\<U_{r-i+1},p_{r-i+1},\ldots,p_{i-1}\>$. Then the simplex $F''=\{U''_i\mid i\in J\}$ is easily checked to be opposite both $F$ and $F'$. Let $W$ be an arbitrary subspace of dimension $k$ containing $U_k$ and contained in $U_{r-k+1}$.  Then $W$ is generated by $U_k$ and a point $p\in\<p_k,\ldots,p_{r-k}\>$. The point $p$ belongs to $U'_{r-k+1}\cap U''_{r-k+1}$. Consequently $\proj^{F}_{F'}(W)=\<U'_k,p\>=:W'$, $\proj^{F'}_{F''}(W')=\<U''_k,p\>=:W''$ and $\proj^{F''}_{F}(W'')=W$. This implies that $\proj^{F''}_{F}\circ\proj^{F'}_{F''}\circ\proj^{F}_{F'}$ is the identity and, by \cref{O1},  the assertion is proved.
\end{proof}

\section{Proof of \cref{MR1}}\label{3cycle}

The following lemma is basically the gate property of buildings.

\begin{lem}\label{gate}
Let $\Delta$ be a spherical building over the type set $I$ and let $F_J$ be  a simplex of type $J\subseteq I$.  Let $K\subseteq J$ and let $F_K$ be the face of $F_J$ of type $K$. Let $F_J^*$ be opposite $F_J$ and let $F_K^*\subseteq F_J^*$ be opposite $F_K$. Set $F'_J:=\proj_{F_K}(F_J^*)$. Let $C\supseteq F_J$ be a chamber. Then \[\proj_{F_J^*}(C)=\proj_{F_J^*}(\proj_{F'_J}(C)).\] 
\end{lem}

\begin{proof}
This follows from the gate property of residues. Since $F'_J=\proj_{F_K}(F_J^*)$, \[F'_J\subseteq \proj_{F_K}(\proj_{F_J^*}(C)).\] The latter is on every minimal gallery joining $\proj_{F_J^*}(C)$ with $C$ and hence equals $\proj_{F_J'}(C)$. The assertion follows. 
\end{proof}

In the next lemma we use the following terminology. A triple of pairwise opposite simplices  $S_1,S_2,S_3$ is called a \emph{projective $3$-cycle}, if  $\proj^{S_3}_{S_1}\circ\proj^{S_2}_{S_3}\circ \proj^{S_1}_{S_2}=\id$. Note that, if the triple $S_1,S_2,S_3$ is a projective $3$-cycle, then so is the triple $S_i,S_j,S_k$, with $(i,j,k)$ any permutation of $(1,2,3)$. Also,  if $S_1, S_2, S_3$ form a projective $3$-cycle, then they all have the same self-opposite type, say $J$, and projections between two opposite simplices of type $J$ are type-preserving. 

\begin{lem}\label{indlemma}
Let $\Delta$ be a spherical building over the type set $I$ and let $S_1,S_2,S_3$ be  a projective $3$-cycle of type $J\subseteq I$.  Let $K\subseteq I\setminus J$ be such that, for each pair of $S_3$-opposite simplices $T_3, T_3'\in\Res(S_3)$, there exists a simplex $T_3''$ such that $T_3,T_3',T_3''$ is a projective $3$-cycle in $\Res_\Delta(S_3)$. Then $n(J\cup K)=1$. More exactly, if $T_1$ is a simplex of type $K$ adjacent to $S_1$, then there exist simplices $T_2'\sim S_2$ and $T_3''\sim S_3$ of type $K$ such that the triple $S_1\cup T_1$, $S_2\cup T_2'$, $S_3\cup T_3''$ is a projective $3$-cycle.  
\end{lem}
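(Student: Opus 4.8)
The goal is to upgrade a projective $3$-cycle of type $J$ to a projective $3$-cycle of type $J\cup K$ by choosing the $K$-parts compatibly. The strategy is to start with the given projective $3$-cycle $S_1,S_2,S_3$ of type $J$ and a simplex $T_1\sim S_1$ of type $K$, and to transport $T_1$ around the cycle via the two perspectivities $\proj^{S_1}_{S_2}$ and $\proj^{S_2}_{S_3}$, producing natural candidates in $\Res(S_2)$ and $\Res(S_3)$. The delicate point is that these perspectivities act on residues of $S_1,S_2,S_3$, so one must first pass to the residues and interpret $T_1,T_2,T_3$ as simplices of type $K$ living in $\Res_\Delta(S_1)$, etc. I would begin by defining $T_2$ and $T_3$ as the images of $T_1$ under the appropriate perspectivities, so that by construction the first two perspectivities of the would-be $3$-cycle match up on the $K$-part.

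First I would make precise, using \cref{Tits}, how a perspectivity $\proj^{S_i}_{S_j}\colon\Res(S_i)\to\Res(S_j)$ acts on the type-$K$ simplices, and verify that the image of a type-$K$ simplex is again a simplex whose union with $S_j$ is a simplex of type $J\cup K$. Then I would set $T_2':=\proj^{S_1}_{S_2}(T_1)$ and examine the image $\proj^{S_2}_{S_3}(T_2')$ in $\Res(S_3)$; the issue is that this image, call it $T_3$, need \emph{not} close up the cycle, i.e.~$\proj^{S_3}_{S_1}(T_3)$ need not equal $T_1$. This is precisely where the hypothesis on $K$ enters. By hypothesis, for the pair of $S_3$-opposite simplices $T_3$ and $T_3':=\proj^{S_1}_{S_3}(T_1)$ (the "target" we want to hit, viewed inside $\Res(S_3)$), there exists a $T_3''$ making $T_3,T_3',T_3''$ a projective $3$-cycle inside $\Res_\Delta(S_3)$.

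The main obstacle, and the heart of the argument, is to show that this local $3$-cycle in $\Res(S_3)$ is exactly the correction needed, so that $S_1\cup T_1$, $S_2\cup T_2'$, $S_3\cup T_3''$ forms a genuine projective $3$-cycle of type $J\cup K$ in $\Delta$. The key technical tool here is \cref{gate}, applied with $F_J$ replaced by the simplices $S_i\cup(\cdot)$ and $F_K$ by the $S_i$ themselves: it lets one factor a perspectivity on $\Res(S_i\cup T_i)$ through the perspectivity on $\Res(S_i)$ followed by the local perspectivity inside $\Res(S_i)$. Concretely, I expect that $\proj^{S_2\cup T_2'}_{S_3\cup T_3''}$ decomposes, via the gate property, as the composition of the $J$-level perspectivity $\proj^{S_2}_{S_3}$ with a local perspectivity in $\Res(S_3)$ carrying $T_2'$-data to $T_3''$, and similarly for the other two legs. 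Chaining the three decompositions, the $J$-level part collapses because $S_1,S_2,S_3$ is already a projective $3$-cycle, and the $K$-level part collapses because $T_3,T_3',T_3''$ is a projective $3$-cycle in $\Res(S_3)$; the matching of indices from \cref{Tits} guarantees the two collapses are compatible. The careful bookkeeping of opposition types (via $\equiv$ and $\equiv_{S_i}$) to ensure that $T_3$ and $T_3'$ really are $S_3$-opposite, so that the hypothesis applies, is the step I expect to require the most attention.
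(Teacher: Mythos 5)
Your proposal contains a genuine gap that invalidates the construction. You set $T_2' := \proj^{S_1}_{S_2}(T_1)$, but with this choice the simplices $S_1\cup T_1$ and $S_2\cup T_2'$ are \emph{not} opposite in $\Delta$: by \cref{Tits}, a vertex $v\sim S_1$ and a vertex $v'\sim S_2$ are opposite in $\Delta$ if and only if $v'\equiv_{S_2}\proj^{S_1}_{S_2}(v)$, i.e.\ $v'$ must be \emph{locally opposite} the projection of $v$, never equal to it. So there is no perspectivity between $S_1\cup T_1$ and $S_2\cup T_2'$ at all, and the would-be $3$-cycle fails at its first leg. Relatedly, your diagnosis of where the hypothesis on $K$ enters is backwards: since $S_1,S_2,S_3$ is a projective $3$-cycle, the transport of $T_1$ \emph{does} close up, that is, $\proj^{S_2}_{S_3}\bigl(\proj^{S_1}_{S_2}(T_1)\bigr)=\proj^{S_1}_{S_3}(T_1)$. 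Hence your $T_3$ and your $T_3':=\proj^{S_1}_{S_3}(T_1)$ are \emph{equal}, not $S_3$-opposite, and the hypothesis (which requires an opposite pair) cannot be applied to them. The ``step requiring the most attention'' that you flag at the end would in fact fail outright, because there is nothing to check: the two simplices coincide.

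The move you are missing, which is the heart of the paper's proof, is the deliberate introduction of genuine opposition. Set $T_2:=\proj_{S_2}(T_1)$ and $T_3:=\proj_{S_3}(T_1)$ (the transported, adjacent-but-not-opposite simplices), and then choose $T_2'$ to be an \emph{arbitrary simplex locally opposite $T_2$ at $S_2$}. By \cref{Tits} this makes $S_1\cup T_1$ and $S_2\cup T_2'$ opposite in $\Delta$; moreover $T_3':=\proj_{S_3}(T_2')$ is then genuinely locally opposite $T_3$ at $S_3$, so the hypothesis on $K$ applies to the pair $(T_3,T_3')$ and yields $T_3''$ completing it to a projective $3$-cycle in $\Res_\Delta(S_3)$, and \cref{Tits} also gives that $S_3\cup T_3''$ is opposite both $S_1\cup T_1$ and $S_2\cup T_2'$. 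Only after these choices does your intended gate-property computation (which is indeed how the paper finishes: one tracks an arbitrary vertex $v_1\sim S_1\cup T_1$ around the triple using \cref{gate} and collapses the residue-level part using the $3$-cycle $T_3,T_3',T_3''$) go through. In short, the hypothesis on $K$ is not a ``correction to a failure of closing up'' --- it is exactly what permits replacing the transported simplices by locally opposite ones without destroying the $3$-cycle property.
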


\begin{proof}
Let $T_1$ be a simplex of type $K$ adjacent to $S_1$. We want to write the identity in $\Res_\Delta(S_1\cup T_1)$ as the product of three projections.

Since $S_1,S_2,S_3$ is a projective $3$-cycle, $\proj^{S_1}_{S_2}T_1=\proj^{S_3}_{S_2}T_3$, where $T_3=\proj^{S_1}_{S_3}T_1$. Hence we have

\begin{align*}
T_{2} &= \proj_{S_{2}} (T_{1}) = \proj_{S_2}(T_3), \\
T_{3} &= \proj_{S_{3}} (T_{2}) = \proj_{S_{3}} (T_{1}),\\
T_{1} &= \proj_{S_{1}} (T_{3}) =\proj_{S_1}(T_2).
\end{align*}

Let $T_{2}'$ be a simplex locally opposite $T_{2}$ at $S_{2}$. Then, by \cref{Tits}, the simplices $T_{1}$ and $T_{2}'$ are opposite in $\Delta$. Set $T_{3}'=\proj_{S_3}(T_2')$. Then $T_{3}'$ is opposite $T_{2}$ in $\Delta$ (again by \cref{Tits}). Since $T_3=\proj_{S_3}(T_2)$, this implies, again using \cref{Tits}, that $T_3'$ is locally opposite $T_{3}$ at $S_3$. Our assumption permits to choose a simplex $T_{3}''\sim S_3$ of type $K$  such that $T_{3}, T_3',T_3''$ is a projective $3$-cycle in $\Res_\Delta(S_3)$. Since, in particular, $T_3''$ is locally opposite both $T_3$ and $T_3'$ at $S_3$, we have similarly as before (using \cref{Tits}) the following opposite relations:

\begin{align*}
T_{1} &\equiv T_{2}'\equiv T_3''\equiv T_1, \\
T_{3} &\equiv_{S_{3}} T_{3}' \equiv_{S_{3}} T_{3}''\equiv_{S_3} T_{3}. \\
\end{align*}

Let $v_{1}$ be an arbitrary vertex adjacent to $S_{1}\cup T_1$. We want to see that if we project $v_{1}$ first onto  $S_{2}\cup T_2'$, then onto $S_{3}\cup T_3''$ and back to $S_{1}\cup T_1$, then we get $v_{1}$ again. Define:

\begin{align*}
v_{2} & := \proj_{S_{2} \cup T_{2}} (v_{1}), \\
v_{3} & := \proj_{S_{3} \cup T_{3}} (v_{1}) = \proj_{S_{3} \cup T_{3}} (v_{2}), \\
v_{2}' & := \proj_{S_2\cup T_{2}'} (w_{2}); \text{ then }v_2' = \proj_{S_{2} \cup T_{2}'} (v_{3}), \\
v_{3}' &:= \proj_{S_{3}} (v_{2}'); \text{ then }v_3'\sim T_3', \\
v_{3}'' & := \proj_{S_{3} \cup T_{3}''} (v'_{2}) \\
\end{align*}

We have that $v_{i}$ is adjacent to $T_{i}$ for $i \in \{ 1, 2, 3\}$, that $v_{j}'$ is adjacent to  $T_{j}'$ for $j \in \{2,3\}$ and that $v_{3}''$ is adjacent to $T_{3}''$, since incidences are preserved under projection.

By \cref{gate}, the projection of $v_{2}'$ from $S_{2} \cup  T_{2}' $ onto $S_{3} \cup T_{3}'' $ is the same as the projection onto $T_{3}''$ of the projection of $v_{2}'$ from $S_{2}$ onto $S_{3}$ and this is the same as $\proj_{T_{3}''} (v_{3}')$ (namely $v_{3}''$).

If we project $v_{3}'$ onto $T_{2}$, we get the vertex $v_{2}'$. If we project further onto $T_{2}$, we get the vertex $v_{2}$. The converse shows that $v_{3}$ maps to $v_{3}'$ under the projection locally at $S_3$ from $T_3$ to $T_3'$. 

Now the projection of $v_1$ onto $S_3\cup T_3''$ is obtained by first projecting onto $S_3$ (and this is $v_3$), and then projecting $v_3$ locally at $S_3$ onto $T_3''$. But since the triple $T_3,T_3',T_3''$ is a projective $3$-cycle, we have locally at $S_3$:
\[
 \proj_{T_{3}''} (v_{3}) = \proj_{T_{3}''}(\proj_{T_{3}'} (v_{3}))  = \proj_{T_3''}v_3'=v_{3}'',
\]
which shows that the triple $S_1\cup T_1$, $S_2\cup T_2'$, $S_3\cup T_3''$ is a projective $3$-cycle. This concludes the proof of the lemma. 
\end{proof}

In view of \cref{indlemma}, and in order to prove \cref{MR1}, it suffices to show that, for any irreducible building $\Delta$, there exists a triple of simplices of polar type which is a projective $3$-cycle. 
\begin{prop}\label{polartypen=1}
Let $\Delta$ be a spherical building. Let $F$ and $F'$ be two opposite simplices of polar type. Then $F$ and $F'$ are contained in a projective $3$-cycle. 
\end{prop}
\begin{proof}
Let $C$ be a chamber containing $F$, let $\Sigma$ be an apartment containing $C$ and $F'$, let $\alpha$ be the root in $\Sigma$ with centre $F$ (and so containing $C$) and let $C'=\proj_{F'}(C)$. Then $F'$ is the centre of the opposite root $-\alpha$ of $\alpha$ in $\Sigma$. Let $\theta\in U_\alpha$ be a non-trivial root elation and set $F''={F'}^\theta$. Let $(C_0,C_1,\ldots,C_\ell)$ be a minimal path in the chamber graph of $\Delta$ connecting $C=C_0$ with $C'=C_\ell$. By symmetry, $\ell=2k$ is even and $C_0,\ldots,C_k$ all belong to $\alpha$, whereas $C_{k+1},\ldots,C_{2k}$ belong to $-\alpha$. The root $(-\alpha)^\theta$ has centre $F''$ and contains $C_{k+1}^\theta, \ldots, C_{2k}^\theta=:C''$. Moreover, since $\theta$ fixes $\partial\alpha=\partial(-\alpha)$ pointwise, the union $(-\alpha)\cup(-\alpha)^\theta$ is an apartment and the chambers $C_{k+1}$ and $C_{k+1}^\theta$ are adjacent. Hence $F''$ is opposite $F'$ and $\delta(C,C')=\delta(C,C'')=\delta(C',C'')$. All this yields
\[\proj_{F'}^{F''}(C'')=C'.\] This shows that $\{F,F',F''\}$ is a projective $3$-cycle. 
\end{proof}

\emph{Proof of \cref{MR1}.} If $J\neq J^\equiv$, then there are no odd self-projectivities and $n(J)=1$. If $J$ is the polar type, then $n(J)=1$ by \cref{polartypen=1}, and if $J$ is polar closed, then $n(J)=1$ by \cref{indlemma}. \qed 

\section{Proof of \cref{MR2}}
The following is a direct consequence of  \cref{O1}.
\begin{lem}\label{newlemma}
Let $J \subseteq I$ be such that $n(J) = 1$ and $J = J^\equiv$. Then the opposition relation in $I\setminus J$ coincides with the restriction to $I\setminus J$ of the opposition relation in $I$.
\end{lem}

We then prove \cref{MR2} by verifying that, as soon as $J$ is not polar closed and $I\setminus J$ contains a connected component of rank at least 2, then for some connected component of $I\setminus J$, the opposition relation of that component does not coincide with the global opposition relation. 
We first treat the exceptional cases and then the infinite class of type $\mathsf{D}_n$. The case $\mathsf{A}_n$ follows from \cref{caseAn}. 
\subsection{Type $\mathsf{E_6}$}

Out of the $2^6-2= 62$ possible types of a non-empty non-maximal simplex, there are exactly $2^4-2=14$ self-opposite ones. Out of these 14, there are precisely seven for which $I\setminus J$ has a connected component of rank at least 2. We present the possibilities pictorially, colouring the vertices of types in $J$ black. For the other seven $I\setminus J$ is the union of isolated vertices.

$\dynkin E{o*oooo}$ and $\dynkin E{**ooo*}$ are polar closed.

$\dynkin E{*oooo*}$: According to \cref{Tits}, types $3$ and $5$ are interchanged by a perspectivity. 

$\dynkin E{*o*o**}$ and $\dynkin E{oo*o*o}$: Opposition in $\mathsf{D_4}$ is trivial, whereas opposition in $\mathsf{E_6}$  interchanges types $3$ and $5$.

$\dynkin E{ooo*oo}$ and $\dynkin E{o*o*oo}$: Opposition in $\mathsf{E_6}$ interchanges the two rank 2 residues. 

\subsection{Type $\mathsf{E_7}$} All of the $2^7-2=126$ possible types of non-empty non-maximal simplices are self-opposite, as opposition is trivial here. There are 18 polar closed types of which only three with a residue containing a connected component of rank at least 2. These components are of types $\mathsf{D_4}$ and $\mathsf{D_6}$; the three cases are 
\[\dynkin E{*oooooo}, {}\hspace{1cm}{}\dynkin E{*oooo*o} \hspace{.5cm}\mbox{ and }\hspace{.5cm}\dynkin E{*oooo**}.\]
Now, the only connected subdiagrams of size at least 2 admitting trivial opposition are precisely the ones of types $\mathsf{D_4}$ and $\mathsf{D_6}$. The above choices for $J$ are the only ones for which $I\setminus J$ has a connected component of  size at least 2 admitting trivial opposition. In all other cases it follows from \cref{newlemma} that $n(J)=2$. 

\subsection{Type $\mathsf{E_8}$} Here opposition is also trivial. There are 19 polar closed types of which only four with a residue containing a connected component of rank at least 2. These components are of types $\mathsf{D_4}$, $\mathsf{D_6}$ and $\mathsf{E_7}$; the four cases are 
\[\dynkin E{ooooooo*},\hspace{1cm} \dynkin E{*oooooo*}, \hspace{1cm}\dynkin E{*oooo*o*} \hspace{.5cm}\mbox{ and }\hspace{.5cm}\dynkin E{*oooo***}.\]
There is actually exactly one more type with a residue of rank $4$ admitting trivial opposition:

$\dynkin E{*oooo**o}$: Here the unique connected component $K=\{2,3,4,5\}$ of $I\setminus J=\{2,3,4,5,8\}$ has the property that $I\setminus K=\{1,6,7,8\}=\dynkin E{*oooo***}$ is polar closed. 

Since all other connected subdiagrams of  size at least $2$ are either of type $\mathsf{A_2},\ldots,\mathsf{A_7}$, $\mathsf{D_5}$, $\mathsf{D_7}$ or $\mathsf{E_6}$, we see that for all other types $J$ such that $I\setminus J$ has a connected component of size at least $2$, we have $n(J)=2$.
\subsection{Type $\mathsf{D}_n$, $n\geq 4$}
Obviously, the only connected subdiagrams of size at least 2 of a diagram of type $\mathsf{D}_n$, $n\geq 4$, where opposition agrees with the opposition in $\mathsf{D}_n$ are of type $\mathsf{D}_{n-2k}$, for $k\in\mathbb{N}$ such that $n-2k\geq 3$. So a counterexample $J$ to the assertion has $\max J=n-(n-2k)=2k$ and the connected component $K$ of size at least 2 of $I\setminus J$ is unique. Clearly, $I\setminus K$, which consists of the vertices of types $1,2,\ldots,2k$, is polar closed (indeed, consider the ordering $2,1;4,3;\ldots;2k,2k-1$).

\section{Projectivity Groups of panels---Proof of \cref{MR3}}

\subsection{A basic lemma} The next lemma will enable us to pin down the special and general projectivity groups for residues which have the full linear group as respective projectivity group in a residue.  
\begin{lem}\label{gate2}
Let $\Delta$ be a spherical building over the type set $I$ and let $F_K$ be  a simplex of type $K\subseteq I$.  Let $K\subseteq J\subset I$ and let $F_J$ be a simplex of type $J$ containing $F_K$. Let $\Pi^+_K(F_J)$ be the special projectivity group of $F_J\setminus F_K$ in $\Res_\Delta(F_K)$. Then $\Pi^+_K(F_J)\leq \Pi^+(F_J)$.
\end{lem}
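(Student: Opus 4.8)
The plan is to deduce the inclusion from \cref{MR0} together with the fact that the little projective group of a residue is induced by the stabiliser, in $\Aut^+(\Delta)$, of the corresponding simplex. First I would record the elementary identification of the two permutation domains. Since $F_K\subseteq F_J$, the residue of $F_J\setminus F_K$ in the building $\Res_\Delta(F_K)$ is exactly $\Res_\Delta(F_J)$: a vertex joinable to $F_K$ in $\Delta$ is joinable to $F_J\setminus F_K$ inside $\Res_\Delta(F_K)$ precisely when $F_J\cup\{v\}=F_K\cup(F_J\setminus F_K)\cup\{v\}$ is a simplex of $\Delta$, that is, precisely when it is joinable to $F_J$ in $\Delta$. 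Thus both $\Pi^+_K(F_J)$ and $\Pi^+(F_J)$ act on the same set $\Res_\Delta(F_J)$, and comparing them as permutation groups is meaningful.

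Next I would invoke \cref{MR0} twice. Applied to $F_J$ in $\Delta$, it gives that $\Pi^+(F_J)$ is the faithful permutation group induced on $\Res_\Delta(F_J)$ by $\Aut^+(\Delta)_{F_J}$. Applied to $F_J\setminus F_K$ in the residue $\Res_\Delta(F_K)$ --- which is again a (thick) Moufang spherical building of simply laced type --- it gives that $\Pi^+_K(F_J)$ is the faithful permutation group induced on $\Res_\Delta(F_J)$ by $\Aut^+(\Res_\Delta(F_K))_{F_J\setminus F_K}$. Hence the lemma reduces to the purely group-theoretic statement that every element of $\Aut^+(\Res_\Delta(F_K))$ stabilising $F_J\setminus F_K$ is, on $\Res_\Delta(F_K)$, induced by an element of $\Aut^+(\Delta)$ stabilising $F_K$.

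The key step is therefore to show that $\Aut^+(\Res_\Delta(F_K))$ is contained in the group induced on $\Res_\Delta(F_K)$ by $\Aut^+(\Delta)_{F_K}$. I would argue this through the Levi decomposition $P_{F_K}=U_{F_K}L_{F_K}$ (Section~8.5 of \cite{Car:72}): by \cref{L1} the unipotent radical $U_{F_K}$ fixes $\Res_\Delta(F_K)$ pointwise, so $P_{F_K}=\Aut^+(\Delta)_{F_K}$ and its Levi complement $L_{F_K}$ induce the same permutation group on $\Res_\Delta(F_K)$. Since $L_{F_K}$ is generated by the maximal torus together with the root subgroups indexed by roots supported on $I\setminus K$, and these root subgroups restrict to the root elations of $\Res_\Delta(F_K)$, the group they generate induces at least the little projective group $\Aut^+(\Res_\Delta(F_K))$ on the residue; in the remaining case, where $\Aut^+(\Delta)=\PGL_{d+1}(\L)$, the explicit block description of $L_{F_K}$ given before \cref{L1} makes the same conclusion transparent.

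Finally I would assemble the pieces. Let $\sigma\in\Pi^+_K(F_J)$; by the second application of \cref{MR0} it is induced by some $\bar g\in\Aut^+(\Res_\Delta(F_K))$ fixing $F_J\setminus F_K$, and by the key step $\bar g$ is the restriction of some $g\in\Aut^+(\Delta)_{F_K}$. As elements of $\Aut^+$ are type preserving, $g$ fixes $F_K$ pointwise and $\bar g$ fixes $F_J\setminus F_K$ pointwise, so $g$ fixes $F_J$ pointwise, i.e.\ $g\in\Aut^+(\Delta)_{F_J}$; moreover $g$ and $\bar g$ act identically on $\Res_\Delta(F_J)$. By the first application of \cref{MR0} this shows $\sigma\in\Pi^+(F_J)$, proving $\Pi^+_K(F_J)\le\Pi^+(F_J)$. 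I expect the main obstacle to be the key step --- justifying cleanly that the root subgroups of the Levi factor restrict to the root elations of the residue and hence induce the \emph{full} little projective group of $\Res_\Delta(F_K)$, rather than merely a subgroup; the simply laced hypothesis, under which a root group depends only on the centre of its root, is what keeps this bookkeeping manageable.
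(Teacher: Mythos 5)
Your route is genuinely different from the paper's, and it has a real gap relative to the statement as given. The lemma carries \emph{no} Moufang hypothesis: it is asserted for every spherical building $\Delta$. Your proof invokes \cref{MR0} twice --- for $F_J$ in $\Delta$ and for $F_J\setminus F_K$ in $\Res_\Delta(F_K)$ --- and \cref{MR0} requires $\Delta$ (and then its residue) to be Moufang; moreover the proof the paper actually gives for it (\cref{simple}) is written only for the simply laced case. So for a non-Moufang spherical building (for instance a reducible rank-$3$ building with a non-Moufang generalised quadrangle as a factor, $K$ being the type of the complementary factor) your argument says nothing, while the lemma still holds. The paper's proof needs none of this machinery: it is a short, purely combinatorial argument. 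One takes $F_J',F_J''$ containing $F_K$ with $F_J'\setminus F_K$ locally opposite both $F_J\setminus F_K$ and $F_J''\setminus F_K$, picks $F_K^*$ opposite $F_K$ in $\Delta$, sets $F_J^*=\proj_{F_K^*}(F_J')$, and checks on chambers, using only the gate property (\cref{gate}), that the residue-level product of perspectivities $\Res(F_J)\to\Res(F_J')\to\Res(F_J'')$ coincides with the product of the two perspectivities $F_J\per F_J^*\per F_J''$ taken in $\Delta$. This works in every spherical building and keeps \cref{gate2} logically independent of \cref{MR0} and of any group theory.

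Even within the Moufang, simply laced setting (which does cover all of the paper's applications of the lemma), the step you yourself flag as the obstacle is genuinely incomplete as written. You assert that the root subgroups of the Levi factor $L_{F_K}$ restrict to the root elations of $\Res_\Delta(F_K)$ and therefore induce the \emph{full} little projective group $\Aut^+(\Res_\Delta(F_K))$. What is needed here is the standard fact that residues of Moufang buildings are Moufang \emph{with induced root groups}: for a root $\alpha$ of $\Delta$ whose wall contains $F_K$, the image of $U_\alpha$ in $\Aut(\Res_\Delta(F_K))$ is not just contained in, but equal to, the corresponding root group of the residue (transitivity of $U_\alpha$ on apartments containing $\alpha$ versus sharp transitivity of the residue's root group). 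That fact is true but is proved nowhere in the paper, and your Levi-decomposition framing (maximal torus plus root subgroups supported on $I\setminus K$; note also that for type $\mathsf{A}_d$ over a skew field $\L$ the little projective group is $\PSL_{d+1}(\L)$, not $\PGL_{d+1}(\L)$) does not by itself deliver this surjectivity. If you supply it, your proof is correct where it applies --- but it remains strictly narrower in scope than the lemma, and considerably heavier than the paper's ten-line gate-property argument.
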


\begin{proof}
Let $F_J'$ and $F_J''$ be two simplices containing $F_K$ such that $F_J'\setminus F_K$ is opposite both $F_J\setminus F_K$ and $F_J''\setminus F_K$ in $\Res_\Delta(F_K)$. We have to show that the product of the two perspectivities in $\Res_\Delta(F_K)$ from $\Res_\Delta(F_J)$ to $\Res_\Delta(F_J')$, subsequently to $\Res_\Delta(F_J'')$ coincides with the product of two perspectivities in $\Delta$. To that aim, let $F_K^*$ be a simplex in $\Delta$ opposite $F_K$, and let $F_J^*$ be the projection of $F_J'$ onto $F_K^*$ (hence $F_J^*=\proj_{F_K^*}^{F_K'}(F_J')$). 
 
Let $C$ be any chamber containing $F_J$. Set   
\begin{align*}
C' &= \proj_{F_J'} (C), \\
C'' &= \proj_{F_J''} (C') = \proj_{F_J''}(\proj_{F_J'} (C)),\\
C^* &= \proj_{F_J^*}(C').
\end{align*}
Then, according to \cref{gate}, we have 
\[C^*=\proj_{F_J^*}(C) \mbox{ and } C''=\proj_{F_J''}(C^*),\]
which implies that $C''$ is indeed equal to the image of $C$ under the product of two perspectivities in $\Delta$. 
\end{proof}
Recall that an automorphism of a spherical building $\Delta$ of simply laced type is called \emph{linear}, if it belongs to $\PGL_{r+1}(\L)$ in case $\Delta$ corresponds to $\PG_r(\L)$, or if it belongs to the linear algebraic group corresponding to the building if $\Delta$ has type $\mathsf{D}_r$, $r\geq 4$, or $\mathsf{E_6,E_7,E_8}$. (For a more precise definition using the corresponding Chevalley group, see \cref{algapp}.) The next result is an immediate consequence of \cref{gate2}. 
\begin{coro}\label{corgate}
Let $\Delta$ be a spherical building over the type set $I$ and let $F_K$ be  a simplex of type $K\subseteq I$.  Let $K\subseteq J\subset I$ and let $F_J$ be a simplex of type $J$ containing $F_K$.  Let $\Pi^+_K(F_J)$ be the special projectivity group of $F_J\setminus F_K$ in $\Res_\Delta(F_K)$. Suppose that $\Pi^+_K(F_J)$ is the full linear type preserving automorphism group of $\Res_\Delta(F_J)$. Then $\Pi^+(F_J)$ also coincides with the full linear type preserving automorphism group of $\Res_\Delta(F_J)$. 
\end{coro}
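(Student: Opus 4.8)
The plan is to realise $\Pi^+(F_J)$ as being squeezed between the full linear type-preserving automorphism group of $\Res_\Delta(F_J)$ and itself, forcing equality. Write $G^+$ for that full linear type-preserving group, so that the hypothesis of the corollary reads $\Pi^+_K(F_J)=G^+$.

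First I would record the lower bound, which is where \cref{gate2} does all the work. That lemma furnishes the inclusion $\Pi^+_K(F_J)\leq\Pi^+(F_J)$, the left-hand group being computed inside the residue $\Res_\Delta(F_K)$ and the right-hand one inside $\Delta$. Substituting the hypothesis $\Pi^+_K(F_J)=G^+$ then gives $G^+\leq\Pi^+(F_J)$ at once.

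For the matching upper bound I would invoke \cref{MR0}, proved as \cref{simple}: the group $\Pi^+(F_J)$ is permutation equivalent to the faithful permutation group induced by the stabiliser $\Aut^+(\Delta)_{F_J}$ on $\Res_\Delta(F_J)$. Now $\Aut^+(\Delta)$ is generated by root elations and is contained in the full linear group of $\Delta$ (the group $\PGL_{d+1}(\L)$ in the projective case, and the ambient linear algebraic group in types $\mathsf{D}_r$ and $\mathsf{E}_m$); moreover root elations fix the type of every simplex, so all of $\Aut^+(\Delta)$ consists of linear type-preserving automorphisms of $\Delta$. Restricting to the stabiliser of $F_J$, every induced automorphism of $\Res_\Delta(F_J)$ is therefore again linear and type-preserving, whence $\Pi^+(F_J)\leq G^+$. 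Combining the two inclusions yields $\Pi^+(F_J)=G^+$, which is the assertion.

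The step carrying the genuine content is the upper bound, and the delicate point there is type-preservation: a priori a self-projectivity of $F_J$ is merely an automorphism of the building $\Res_\Delta(F_J)$ and could in principle be a duality. What excludes this for the \emph{special} (even) group is exactly the identification in \cref{MR0} of $\Pi^+(F_J)$ with the action of the little projective group, whose generating root elations preserve the type of each simplex; this is where the evenness built into $\Pi^+$ is used, and it is the only ingredient beyond the purely formal inclusion supplied by \cref{gate2}.
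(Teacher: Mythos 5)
Your proof is correct and takes essentially the same route as the paper: the paper deduces the corollary as an immediate consequence of \cref{gate2}, which is exactly your lower bound $\Pi^+_K(F_J)\leq\Pi^+(F_J)$, turning the hypothesis into the containment of the full linear type-preserving group in $\Pi^+(F_J)$. The reverse containment, which the paper leaves implicit in the word ``immediate'', is precisely the observation you make explicit via \cref{MR0} (elements of $\Aut^+(\Delta)_{F_J}$ are linear and type-preserving on $\Delta$, hence induce linear type-preserving automorphisms of $\Res_\Delta(F_J)$), and this is consistent with how the paper itself invokes \cref{MR0} for such upper bounds elsewhere, e.g.\ in Case (A3) of \cref{caseD}.
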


\subsection{End of the proof} Now \cref{MR3} follows from \cref{corgate} because every vertex of the Coxeter diagram of a simply laced irreducible spherical building of rank at least 3 is contained in a residue isomorphic to the building of a projective plane over some skew field $\L$, and in a projective plane the special projectivity group of a line is $\PGL_2(\L)$ acting naturally on $\PG(1,\L)$. 

\section{General and special projectivity groups of irreducible residues of rank at least $2$} \label{allgroups}

In this section we determine the exact projectivity groups for irreducible residues. 
We start with the algebraic approach to the special projectivity groups. 

\subsection{The special projectivity groups}\label{algapp}

The arguments in this section were generously suggested to us by the referee.


Let $\Delta$ be an irreducible spherical Moufang building with associated Coxeter system $(W, S)$.  Let $C$ be a
chamber and let $\Sigma$ be an apartment containing $C$. For each $J\subset S$ let $\bar{J} := S\setminus J$ and let 
$R_J$ be the $J$-residue containing $C$. Furthermore, let $R_J'$ denote the unique residue of $\Sigma$ that is opposite $R_J$.

For each $s$ let $T_s$ denote the group of all automorphisms of $\Delta$ stabilising $\Sigma \cup R_{\bar{s}}$ pointwise. The groups $T_s$, $s\in S$, obviously normalise each other and we put $T:=\langle T_s\mid s\in S \rangle \leq \mathrm{Aut}^\circ(\Delta)$. For a root $\alpha$ in $\Sigma$, let $U_\alpha$ be the  root group associated with $\alpha$. As usual, we denote by $-\alpha$ the opposite root. 

Furthermore, let $\alpha_s$ be the root of $\Sigma$ containing $C$ such that $R_s$ is on the boundary of $\alpha_s$, 
and let $L_s:=\langle U_{\alpha_s},U_{-\alpha_s} \rangle$.

We abbreviate $G^{\dagger}:=\Aut^\dagger(\Delta)= \langle U_\alpha\mid \alpha \mbox{ is a root of } \Sigma \rangle \leq \mathsf{Aut}^\circ(\Delta)$. Let $H\leq G^\dagger$ 
be the pointwise stabilizer of $\Sigma$ in $G^\dagger$ and let $H_s:=H\cap L_s$ for $s\in S$. Again, the $H_s$, $s\in S$, normalise each other 
and $H=\langle H_s\mid s\in S\rangle$.


We now restrict to the case of Chevalley groups over fields, that is, $G^\dagger$ is a Chevalley group over a field $\K$, and we assume that the corresponding spherical building $\Delta$ is irreducible.  

For each $s\in S$ the group $T_s$ acts regularly on $R_s\setminus \Sigma$, which yields an isomorphism 
$\theta_s:\mathbb{K}^*\to T_s$ given by $\lambda\mapsto \theta_s(\lambda)$. These isomorphisms provide a canonical isomorphism
$T\cong (\K^*)^{|S|}$.

Also, for each $s\in S$ there is a canonical homomorphism $h_s:\K^*\to T$ given by $\lambda \mapsto h_s(\lambda)$
whose image is $H_s$. 

Note that $H\leq T$. By \cite[Lemma~27]{Ste:68}, the group $H$ corresponds to the root lattice of $G^\dagger$ and, if the root lattice coincides with the weight lattice of $G^\dagger$, then the groups $T$ and $H$ coincide (see for instance \cite[\S1.11]{Car:93}). In general, 
the group $\widehat{G}=G^\dagger T$ is the group belonging to the type-preserving linear algebraic group corresponding to the building $\Delta$ (see previous section). 

Let $J\subseteq S$ (and we may think of $J$ as corresponding to a connected subdiagram of the Coxeter diagram of $\Delta$). Let $L_J$ be the corresponding Levi subgroup, that is, $L_J$ is the stabiliser in $G^\dagger$ of $R_J\cup R_J'$. By Theorem \ref{MR0}, $\Pi^+(R_J)$ is the image of $L_J$ in $\Aut^\circ(R_J)$. Setting $T_J:=L_J\cap T$, we find $L_J=\Aut^\dagger(R_J)T_J$. So, to obtain $\Pi^+(R_J)$ we have to determine $T_J$. 
This boils down to understanding the action of $h_s(\K^*)$ on $R_J$, for all $s\in S\setminus J$ (for $s\in J$, this action is already in $\Aut^\dagger(R_J)$). If $s\in S\setminus J$ is not connected to $J$ in the Coxeter diagram, then
the action of $H_s$ on $R_J$ is trivial, as both $U_{\alpha_s}$ and $U_{-\alpha_s}$ pointwise fix $R_J$, and $H_s$ is inside $L_s$. If $s\in S\setminus J$ is connected to a vertex $j\in J$ (and we write $s\sim j$), then $j$ is uniquely determined. We claim that the action induced by $H_s$ on $R_J$ is the action of $T_j$. Indeed, since by uniqueness of $j\sim s$, both $H_s$ and $T_j$ pointwise fix $R_{J\setminus\{j\}}$, since $T_j$ acts faithfully on $R_j$, and $H_s\leq L_s$, it suffices to look in $R_{\{s,j\}}$, which is a projective plane. We may choose coordinates such that $R_j$ corresponds to line $(*,*,0)$ and $R_s$ to the line pencil with vertex the point $(1,0,0)$ (thus, $C\cap R_{\{s,j\}}=\{(1,0,0),(*,*,0)\}$). Then \[H_s/R_{\{s,j\}}=\left\{\begin{pmatrix} 1 & & \\  & k &  \\  &  & k^{-1}\end{pmatrix}\left| \right.k\in\K^\times\right\}\mbox{ and }T_j/R_{\{s,j\}}=\left\{ \begin{pmatrix} \ell & & \\  & 1 &  \\  &  & 1\end{pmatrix}\mid \ell\in\K^\times\right\}.\] Restricting to the first two coordinates and putting $\ell=k^{-1}$, the claim follows. Hence $\Pi^+(R_J)$ is generated by $\Aut^\dagger(R_J)$ and all $T_j$, $j\in J$, such that there exists $s\in S\setminus J$ with $s\sim j$. It remains to determine $\Aut^\dagger(R_J)T_j$ for these values of $j\in J$ (and the groups they generate for suitable different $j$). These follow from straight forward computations in the corresponding Chevalley groups. Let us explain the main examples such that the reader can verify the tables.

We first introduce some notation.

\begin{nota}
  For each $a\in\mathbb{N}$, let \[\PSL_n(\K,a):=\{M\in\GL_n(\K)\mid \det M=k^a, k\in\K\}.\Sc_n(\K)/\Sc_n(\K),\] where $\Sc_n(\K)$ is the group of all scalar matrices over $\K$. We get $\PGL_n(\K)$ by putting $a=1$ and $\PSL_n(\K)$ by putting $a=n$. Note that we can always choose $a$ as a divisor of $n$ since $\PSL_n(\K,a)=\PSL_n(\K,g)$, with $g=\gcd(a,n)$. 
 \end{nota} 

\begin{compactenum}[$(1)$]
\item Let $R_J$ be the building of a projective space $\PG(d,\K)$.  We relabel $J=\{1,2,\ldots,d+1\}$ according to Bourbaki \cite{Bou:68}. Then, with respect to the standard chamber in $\PG(d,\K)$, we have \[T_j=\left\{\begin{pmatrix} kI_j & \\ & I_{d-j+1}\end{pmatrix}\mid k\in\K^\times \right\},\] where $I_\ell$ is the $\ell\times\ell$ identity matrix. It follows that $\PSL_{d+1}(\K)T_j=\PSL_{d+1}(\K,j)=\PSL_{d+1}(\K,g)$, with $g=\gcd(d+1,j)$. This has the following consequences:
\begin{compactenum}[$(i)$]
\item If some endpoint of the subdiagram of the diagram of $\Delta$ corresponding to $R_J$ has a neighbour outside $J$, then $\Pi^+(R_J)$ is $\PGL_{d+1}(\K)=\widehat{G}$. 
\item If $j\in J$ corresponds to a Bourbaki label that is relatively prime to $d+1$, and it is connected to a vertex outside $J$, then, again, we have $\Pi^+(R_J)=\PGL_{d+1}(\K)=\widehat{G}$. (Note $(i)$ is a special case of this.)
\item There are precisely three cases which are not applicable to either $(i)$ or $(ii)$. The first one is $J=\{1,2,.\ldots,n-1\}$ in a building of type $\mathsf{D}_n$, with $n$ even. Here, $j=n-2$ and we get $\PSL_n(\K,2)$. The second one is $J=\{1,3,4,5,6\}$ in $\mathsf{E_6}(\K)$. Here, the node $4$ is joint to a vertex outside $J$, namely $2$. The former node has in Bourbaki labelling for $\mathsf{A_5}$  label $3$ and hence $\Pi^+(R_J)=\PSL_6(\K,3)$. The third situation is when $J=\{2,4,5,6,7\}$ in $\mathsf{E_7}(\K)$.  Here, the second node is joint to an outside node, leading to $\Pi^+(R_J)=\PSL_6(\K,2)$. 
\end{compactenum}
\item Let $R_J$ be the building of type $\mathsf{D}_n$ over $\K$.  Due to the existence of a branching vertex of valency $3$ in the Dynkin diagram, only the end vertices can be joint to  vertices outside this diagram when viewed as a subdiagram of the Dynkin diagram of $\Delta$. So, we are only interested in $G^\dagger T_1$, $G^\dagger T_{n-1}$, $G^\dagger T_n$ and the groups generated by any combination of those.    Referring forward to \cref{Q0} and \cref{Q}, it follows from   \cref{PGO} that $G^\dagger T_1=\PGO_{2n}^\circ(\K)$, $G^\dagger T_nT_{n-1}=\overline{\PGO}_{2n}^\circ(\K)=\widehat{G}$ (but $T_n=T_{n-1}$ if $n$ is odd), and  $G^\dagger T_n=G^\dagger T_{n-1}=\overline{\mathsf{P\Omega}}_{2r}(\K)$ if $n$ is even.  \cref{PGO}$(iii)$ now implies $G^\dagger T_1T_n=\overline{\PGO}_{2n}^\circ(\K)=\widehat{G}$. All this implies that there are exactly two cases where $\Pi^+(R_j)$ is not the full linear group $\widehat{G}$. 
\begin{compactenum}[$(i)$]
\item $\Delta$ is the building of type $\mathsf{D}_{n+1}$ over $\K$ and $S\setminus\{J\}=\{1\}$. Then the above implies $\Pi^+(R_J)=\PGO_{2n}^\circ(\K)$.
\item $\Delta$ is the building of type $\mathsf{E_7}$ over $\K$ and $J=\{2,3,4,5,6,7\}$. It follows from the above immediately that $\Pi^+(R_J)=\overline{\mathsf{P\Omega}}_{12}(\K)$. 
\end{compactenum}
\item  Let $R_J$ be the building of type $\mathsf{E_6}$ over $\K$. We are only interested in $G^\dagger T_1$, which equals $\widehat{G}$ by \cref{E6T1}. Note that, in this case, the geometric approach obtains $\Pi^+(R_J)=\widehat{G}$ in a quite different way, namely using \cref{polE6} and symplectic polarities.  
\item Let $\Delta$ be the building of type $\mathsf{E_8}$ over $\K$. Then $\widehat{G}=G^\dagger$, as the weight lattice and root lattice coincide. Hence for every $J\subseteq S$ we have $\Pi^+(R_J)$ is the full linear group. 
\end{compactenum}

\bigskip

We now proceed with the geometric proof (including the determination of the general projectivity groups) and start with some general results.
\subsection{General considerations} 
The \emph{fix set} of an automorphism $\rho$ of a building $\Delta$ is the set of simplices fixed under $\rho$. Two automorphisms $\varphi_1$ and $\varphi_2$ are called \emph{congruent} if their fix sets are isomorphic (using a type preserving automorphism of the building). Conjugate (with respect to a type-preserving automorphism) automorphisms are examples of congruent automorphisms. Clearly, congruence is an equivalence relation and an equivalence class $\Pi$ is called a  \emph{geometric} set of automorphisms.  We are going to use this notion only for rather large fix sets. We will mention the examples that we will need in our proofs at appropriate places (see \cref{exDnBn},~\ref{exDn} and~\ref{exE6}). Here, we provide two examples.

\begin{exm}\label{exAn}
Let $U$ and $U'$ be two complementary subspaces of the projective space $\PG(r,\K)$ (that is, $U$ and $U'$ generate the whole space and are disjoint). Then the set of nontrivial collineations pointwise fixing $U\cup U'$, together with all their conjugates, is a geometric set of automorphisms of $\PG(r,\K)$. 
\end{exm}

\begin{exm}\label{exAnCn}
The set of all symplectic polarities of a given projective space is a geometric set of dualities. (This follows from the fact that such polarities are characterised by the property that each point is mapped onto a hyperplane it is contained in.)
\end{exm}

 \begin{lem}\label{triangles}
 Let $\Delta$ be a spherical building over the type set $I$ and let $J\subseteq I$ be a self-opposite type. Suppose that for each quadruplet of simplices of type $J$, there exists a simplex of type $J$ opposite all  the given simplices. Let $F, F', F''$ be three pairwise opposite simplices of type $J$ and denote by $\theta_0$ the projectivity $F\per F'\per F''\per F$. Denote with $\Pi_3(F)$ the set of all self-projectivities of $F$ of length $3$ and suppose that $\Pi_3(F)$ is geometric. 
 Then $\Pi(F)=\<\Pi_3(F)\>$ and $\Pi^+(F)=\<\theta_0^{-1}\theta\mid \theta \in\Pi_3(F)\>$. 
 \end{lem}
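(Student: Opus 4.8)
The plan is to prove the stronger statement that every self-projectivity of $F$ is a product of members of $\Pi_3(F)$ (the triangle projectivities), and that the number of factors may be taken with the same parity as the length of the projectivity. Granting this, the description of $\Pi(F)$ is immediate, and the one of $\Pi^+(F)$ follows by an elementary group manipulation. The hypothesis on quadruplets will furnish the auxiliary simplices out of which triangles are built, while the assumption that $\Pi_3(F)$ is geometric will be used to transport triangle projectivities from one residue to a neighbouring one.

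For the identity $\Pi(F)=\<\Pi_3(F)\>$ I would proceed by induction on the length $n$ of a closed walk of consecutively opposite simplices $F=F_0\per F_1\per\cdots\per F_n=F$ representing a given element $\gamma\in\Pi(F)$. Recall that, for opposite simplices, $\proj^{A}_{B}$ and $\proj^{B}_{A}$ are mutually inverse, so backtracking steps cancel and the cases $n\le 3$ are trivial (for $n=3$, $\gamma\in\Pi_3(F)$). For $n\ge 4$, use the hypothesis to pick a simplex $G$ of type $J$ opposite each of $F_0,F_1,F_2,F_3$, and reroute the initial subpath $F_0\per F_1\per F_2\per F_3$ through $G$. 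Writing this out, $\gamma=\gamma'\circ\pi$, where $\gamma'$ is the closed walk $F_0\per G\per F_3\per F_4\per\cdots\per F_n=F_0$ of length $n-1$, and $\pi$ is the pentagonal self-projectivity
\[\pi=\proj^{G}_{F_0}\circ\proj^{F_3}_{G}\circ\proj^{F_2}_{F_3}\circ\proj^{F_1}_{F_2}\circ\proj^{F_0}_{F_1}\]
of $F_0$. By induction $\gamma'\in\<\Pi_3(F)\>$, so it remains to handle $\pi$.

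To decompose $\pi$, conjugate it to $G$: one computes $\proj^{F_0}_{G}\circ\pi\circ\proj^{G}_{F_0}=\proj^{F_3}_{G}\circ\proj^{F_2}_{F_3}\circ\proj^{F_1}_{F_2}\circ\proj^{F_0}_{F_1}\circ\proj^{G}_{F_0}$, which, since $G$ is opposite each $F_i$, factors (after inserting the trivial products $\proj^{G}_{F_i}\circ\proj^{F_i}_{G}$) as the product of the three triangle projectivities on $\{G,F_2,F_3\}$, $\{G,F_1,F_2\}$ and $\{G,F_0,F_1\}$ read at $G$. Each of these lies in $\Pi_3(G)$. Now the perspectivity $\proj^{F_0}_{G}\colon\Res(F_0)\to\Res(G)$ is an isomorphism of buildings, and $\Pi_3$ depends only on the type $J$; since $\Pi_3(F_0)$ is geometric, the conjugate $\proj^{G}_{F_0}\circ T\circ\proj^{F_0}_{G}$ of any $T\in\Pi_3(G)$ has the corresponding triangle fix structure in $\Res(F_0)$ and therefore lies in $\Pi_3(F_0)$. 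Hence $\pi$ is a product of three members of $\Pi_3(F)$, completing the induction; moreover the bookkeeping shows that a walk of length $n$ yields a product of $N\equiv n\pmod 2$ triangles. This is the step I expect to be the main obstacle, precisely because the intermediate vertices of the walk need not be opposite $F_0$, which is exactly what forces the detour through $G$ and the appeal to the geometric hypothesis.

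Finally, for $\Pi^+(F)=\<\theta_0^{-1}\theta\mid\theta\in\Pi_3(F)\>$, write $H_0$ for the right-hand side. Each generator $\theta_0^{-1}\theta$ has length $6$, so $H_0\subseteq\Pi^+(F)$. Conversely, setting $g_\tau:=\theta_0^{-1}\tau$ and using that $\Pi_3(F)$ is closed under taking inverses, for any $\sigma_a,\sigma_b\in\Pi_3(F)$ one has $\sigma_a\sigma_b=(\theta_0^{-1}\sigma_a^{-1})^{-1}(\theta_0^{-1}\sigma_b)=g_{\sigma_a^{-1}}^{-1}g_{\sigma_b}\in H_0$, so every product of an even number of triangles lies in $H_0$. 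Given $\gamma\in\Pi^+(F)$, choose an even-length representative; by the parity refinement of the first part, $\gamma$ is a product of an even number of members of $\Pi_3(F)$, whence $\gamma\in H_0$. This yields $\Pi^+(F)=H_0$ and finishes the proof.
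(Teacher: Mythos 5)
Your proof is correct and takes essentially the same approach as the paper: the same parity-refined claim (a self-projectivity of length $n$ is a product of $N\equiv n\pmod 2$ triangles), proved by the same induction that reroutes the initial segment $F_0\per F_1\per F_2\per F_3$ through a simplex $G$ opposite $F_0,F_1,F_2,F_3$, with the same appeal to the geometric hypothesis to recognise conjugates of triangles as members of $\Pi_3(F)$, and the same pairing manipulation for $\Pi^+(F)$. The only cosmetic difference is that you conjugate triangles based at $G$ by the single perspectivity $G\per F_0$, while the paper conjugates triangles $\rho_i$ based at $F_i$ by the length-two projectivities $F_0\per G\per F_i$; the resulting factorisation is the same.
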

 \begin{proof}
It is clear that the said groups are subgroups of the respective projectivity groups. Now we claim that every self-projectivity of $F$ of length $\ell$ is the product of $\ell\mod 2\Z$ members of $\Pi_3(F)$. First note that, if $F^*$ is a simplex of type $J$ and $\theta:\Res(F)\to\Res(F^*)$ is an isomorphism, then $\theta\Pi_3(F^*)\theta^{-1}=\Pi_3(F)$, by the fact that $\Pi_3(F)$ is geometric. 

Now let $F=F_0\per F_1\per \cdots\per F_\ell=F$ be a self-projectivity of length $\ell$. Suppose $\ell\geq 5$. Let $G$ be opposite all of $F_0,F_1,F_2$ and $F_3$.  Denote $\theta_i:F_0\per G\per F_i$ and $\rho_i=F_i\per G\per F_{i-1}\per F_i$, $i=1,2,3$. Then we have \[F_0\per F_1\per F_2\per F_3=\theta_1\rho_1\theta_1^{-1}\cdot\theta_2\rho_2\theta_2^{-1}\cdot\theta_3\rho_3\theta_3^{-1}\cdot\theta_3.\]
Hence we can replace $F_0\per F_1\per F_2\per F_3$ by the product of three members of $\Pi_3(F)$ and the projectivity $\theta_3=F\per G\per F_3$ of length 2. So, the claim will follow inductively, if we show it for $\ell=4$, that is, in the above we have the additional perspectivity $F_3\per F_0$. Hence we have, with the same notation, and denoting additionally $\rho_4=F_0\per G\per F_3\per F_0$, which belongs to $\Pi_3(F)$,
 \[F_0\per F_1\per F_2\per F_3\per F_0=\theta_1\rho_1\theta_1^{-1}\cdot\theta_2\rho_2\theta_2^{-1}\cdot\theta_3\rho_3\theta_3^{-1}\cdot\rho_4,\] which is a product of four, hence an even number of, elements of $\Pi_3(F)$. Now the assertions are clear, noting that every product $\theta_1\theta_2$ of members of $\Pi_3(F)$ can be written as the product $(\theta_0\theta_1^{-1})^{-1}\cdot(\theta_0\theta_2)$ of two automorphisms of the form $\theta_0\theta$, where $\theta\in\Pi_3(F)$. 
 \end{proof}
 
 We will usually apply this lemma to the case where all members of $\Pi_3(F)$ are type-interchanging involutions, and so $\Pi^+(F)$ will also be the intersection of $\Pi(F)$ with the group of type preserving collineations. It is precisely this method that provides an alternative ``reason'' for the special projectivity groups to be what they are versus the algebraic approach explained in \cref{algapp}. 
 
 In \cref{triangles}, there is the condition that we find a simplex opposite four given simplices. It is well-known that one can find a chamber opposite two given chambers, see Proposition~3.30 in \cite{Tits:74}. We can generalise this so that the condition in \cref{triangles} becomes automatic for buildings with thickness at least $5$; for the simply laced case this just means that the building is not defined over the fields $\F_2$ or $\F_3$.  
 
 We say that a building \emph{has thickness at least $t$} if every panel is contained in at least $t$ chambers. The following generalises Proposition~3.30 of \cite{Tits:74}. The proof is also a rather obvious generalisation. 
 
 \begin{prop}\label{3.30}
 If a spherical building has thickness at least $t+1$, then there exists a chamber opposite $t$ arbitrarily given chambers.   In particular, there exists a vertex opposite $t$ arbitrarily given vertices of the same self-opposite type. 
 \end{prop}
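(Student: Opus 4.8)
The plan is to follow the proof of Proposition~3.30 of \cite{Tits:74} (which is essentially the case $t=2$) and recast it as a descent argument, keeping careful track of how many chambers through a given panel have to be avoided at each step. Throughout I would write $N$ for the diameter of the spherical building $\Delta$, i.e.\ the length of the longest element $w_0$ of its Weyl group.

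First I would recall two standard facts (see \cite{Tits:74,Abr-Bro:08}). Two chambers are opposite if and only if their gallery distance equals $N$; and, by the gate property already used in \cref{gate}, for a panel $\pi$ and a chamber $C$ the projection $\proj_\pi(C)$ is the unique chamber of $\pi$ nearest $C$, every other chamber of $\pi$ lying one step farther. Combining these yields the local fact on which everything rests: if a chamber $E$ is opposite $C$ and $\pi$ is a panel on $E$, then, since $N$ is maximal, the gate $\proj_\pi(C)$ lies at distance $N-1$, so it is the \emph{unique} chamber of $\pi$ not opposite $C$, while every other chamber of $\pi$ (in particular $E$) is opposite $C$.

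The heart of the argument is a single descent step. Suppose $E$ is opposite $C_1,\dots,C_{t-1}$ but not $C_t$. Since $E$ is not opposite $C_t$, standard building theory supplies a panel $\pi$ on $E$ with $E=\proj_\pi(C_t)$ (a panel ``pointing away'' from $C_t$), so that every chamber of $\pi$ other than $E$ is strictly closer to opposition with $C_t$. For each $i<t$, the local fact above shows that exactly one chamber of $\pi$, namely $\proj_\pi(C_i)$, fails to be opposite $C_i$. Hence the chambers of $\pi$ we must avoid are $E$ together with $\proj_\pi(C_1),\dots,\proj_\pi(C_{t-1})$, at most $t$ in total. As $\Delta$ has thickness at least $t+1$, the panel $\pi$ carries at least $t+1$ chambers, so some chamber $E'\neq E$ of $\pi$ avoids all of the $\proj_\pi(C_i)$; this $E'$ is still opposite $C_1,\dots,C_{t-1}$ and is strictly closer to opposition with $C_t$. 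Iterating (the process terminates because the gallery distance to $C_t$ strictly increases toward $N$) produces a chamber opposite all of $C_1,\dots,C_t$.

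I would organise the whole proof as an induction on $t$: the base case $t=1$ is immediate by taking the chamber opposite $C_1$ in an apartment through $C_1$, and the inductive step first invokes the hypothesis to obtain $E$ opposite $C_1,\dots,C_{t-1}$ and then applies the descent step above to repair opposition to $C_t$. The ``in particular'' clause then follows by choosing, for $t$ given vertices of a common self-opposite type $i$, a chamber through each, applying the chamber statement to get a chamber $E$ opposite all of them, and reading off the type-$i$ vertex of $E$: since $i$ is self-opposite, that vertex is opposite each of the given vertices. The main obstacle is the bookkeeping in the descent step---verifying that the forbidden chambers are precisely $E$ and the $t-1$ gates $\proj_\pi(C_i)$ (in particular that a panel on $E$ pointing away from $C_t$ exists and that moving off $E$ along it \emph{uniformly} increases the distance to $C_t$), since it is exactly this count of at most $t$ that matches the thickness bound $t+1$.
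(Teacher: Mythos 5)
Your proposal is correct and follows essentially the same route as the paper: induction on $t$, a panel $A$ of the current chamber $E$ with $E=\proj_A(C_t)$, and the count that only $E$ and the $t-1$ gates $\proj_A(C_i)$ must be avoided, which the thickness bound $t+1$ exactly permits. The only cosmetic difference is that the paper packages your iterative descent as an extremal argument (choose $E$ opposite $C_1,\dots,C_{t-1}$ at maximal distance from $C_t$ and derive a contradiction), which makes the termination argument implicit.
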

 
 \begin{proof}
We will prove the claim by induction. First consider the case that $t=2$. Then the condition that every panel is contained in at least $t+1 = 3$ chambers is equivalent to $\Delta$ being a thick building and the assertion follows with Proposition~3.30 of \cite{Tits:74}. Now suppose $t > 2$. Suppose we know we can find a chamber opposite $t-1$ given chambers. Let $C_{1}, \dots, C_{t-1}$ be $t-1$ different chambers in $\Delta$ and let $C_{t}$ be another chamber in $\Delta$. Among all the chambers in $\Delta$ opposite to each $C_{i}$, $i \in \{1, \dots, t-1\}$, let $E$ be a chamber with maximal distance to $C_{t}$. Assume that $C_{t}$ and $E$ are not opposite. Then $\dist(C_{t},E) \neq \diam\Delta$. Let $\Sigma$ be an apartment containing both $C_{t}$ and $E$. With Proposition 2.41 of \cite{Tits:74}, it follows that there exists a face $A$ of codimension $1$ of $E$, such that $E = \proj_{A} (C_{t})$. 

Since every panel is contained in $t+1$ chambers, we can find a chamber $E'$ having $A$ as a face that is not equal to $E$ and not equal to $\proj_{A} (C_{i})$ for $i \in \{1, \dots, t-1\}$.

With Proposition 3.19.7 and Lemma 2.30.7 of \cite{Tits:74}, it follows that 
\[\begin{cases}
&\dist(C_{i}, E') =  \dist(\proj_{A} (C_{i}), C_{i}) + 1 = \dist(C_{i}, E) = \diam (\Delta) \text{, for } i \in \{1, \dots, t-1\} ,\\
&\dist(C_{t},E') = \dist(C_{t},E)+1.
\end{cases}
\]
That means $E'$ is opposite to each $C_{i}$ for $i \in \{1, \dots, t-1\}$ and has a strictly greater distance to $C_{t}$ than $E$. That contradicts the fact that $E$ has maximal distance to $C_{t}$ among the chambers opposite each $C_{i}$ for $i \in \{1, \dots, t-1\}$. It follows that $C_{t}$ and $E$ are opposite.
\end{proof}
 
 This proposition takes care of all situations where the field has order at least $4$. Over $\F_2$, the projectivity groups will always be determined already by  \cref{MR0}. So there remains to deal with $\F_3$. In this case, we will prove in the situations we need and more generally, that, if the simply laced spherical building is defined over the finite field $\F_q$, then we can find a simplex opposite $q+1$ given simplices of certain given types (see the next paragraphs).   
 
  \subsection{Projective spaces}
Here, $\Delta$ is a projective space over a skew field $\L$. We will show that the special projectivity groups of any irreducible residue of rank $\ell$ is isomorphic to $\PGL_{\ell+1}(\L)$. The general group always coincides with the special group, either because the type of the simplex is not self-opposite, or the type is polar closed. 

\begin{theorem}\label{caseA}
Let $\Delta$ be a building of type $\mathsf{A}_r$, defined over the skew field $\L$. Let $F$ be any simplex such that $I\setminus\typ(F)$ is connected in the Coxeter diagram (say of type $\mathsf{A}_\ell$). Then both $\Pi^+(F)$ and $\Pi(F)$ are permutation isomorphic to $\PGL_{\ell+1}(\L)$.
\end{theorem}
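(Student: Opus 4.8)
The plan is to reduce the statement about an arbitrary simplex $F$ with connected complement to the already-settled case of a panel, using the inductive machinery of \cref{gate2}/\cref{corgate} together with the structural description of the parabolic and its Levi factor in $\PGL_{d+1}(\L)$ given in the preliminaries. The target is to identify $\Pi^+(F)$ with the full type-preserving linear automorphism group of $\Res_\Delta(F)$, which is a building of type $\mathsf{A}_\ell$ over $\L$, namely $\PGL_{\ell+1}(\L)$ in its natural action on the associated projective space $\PG(\ell,\L)$; and then to argue that $\Pi(F)=\Pi^+(F)$ so that no duality appears.

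First I would observe that by \cref{caseAn} the hypothesis that $I\setminus\typ(F)$ is connected of type $\mathsf{A}_\ell$ forces a clean dichotomy: either $\typ(F)$ is not self-opposite, in which case $\Pi(F)=\Pi^+(F)$ holds trivially (by the remark that $\Pi(F)=\Pi^+(F)$ as soon as $\typ(F)^\equiv\neq\typ(F)$), or $\typ(F)$ is self-opposite and polar closed, in which case $n(\typ(F))=1$ again gives $\Pi(F)=\Pi^+(F)$. So the whole difficulty collapses to computing $\Pi^+(F)$, and this dichotomy is precisely why the general and special groups coincide here. I would state this reduction at the outset so that the remainder of the proof only needs to pin down $\Pi^+(F)$.

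Next, to compute $\Pi^+(F)$ I would invoke \cref{corgate}. Choose a simplex $F_K\subset F$ of type $K=\typ(F)\setminus\{$one vertex$\}$, so that $F=F_J$ with $J=\typ(F)$ and $F\setminus F_K$ is a single vertex, i.e.\ a panel in $\Res_\Delta(F_K)$. Since $I\setminus J$ is connected of type $\mathsf{A}_\ell$, the residue $\Res_\Delta(F_K)$ is itself an irreducible building of type $\mathsf{A}_{\ell+1}$ over $\L$ (removing one fewer vertex from a type-$\mathsf{A}$ diagram keeps it connected when done correctly), and $F\setminus F_K$ is a vertex in it. By \cref{MR3}, the special projectivity group of this panel inside $\Res_\Delta(F_K)$ is $\PGL_2(\L)$ acting naturally on $\PG(1,\L)$ — the full type-preserving linear group of the rank-1 residue $\Res_\Delta(F)$. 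Then \cref{corgate} immediately upgrades this: $\Pi^+(F)$ coincides with the full type-preserving linear automorphism group of $\Res_\Delta(F)$, which is $\PGL_{\ell+1}(\L)$. Combined with the first paragraph, this yields $\Pi^+(F)=\Pi(F)\cong\PGL_{\ell+1}(\L)$ in its natural action, as claimed.

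The step I expect to require the most care is the application of \cref{corgate}, specifically verifying its hypothesis that $\Pi^+_K(F_J)$ equals the \emph{full} linear type-preserving automorphism group of $\Res_\Delta(F_J)$, rather than just a subgroup. For a panel this is exactly \cref{MR3}, so the subtlety is really in choosing the flag $F_K$ so that the complement types behave correctly and the induction hypothesis applies to a residue of type $\mathsf{A}$ (one must check the removed vertex is an end node of the $\mathsf{A}_{\ell+1}$ subdiagram, guaranteeing connectivity of the intermediate residue). An alternative, more self-contained route that avoids relying on \cref{MR3} would be a direct linear-algebra computation inside the explicit matrix form of $L_F$ recorded in the preliminaries (with $d_2=\ell+1$ the middle block $M_{d_2\times d_2}$ ranging over all of $\GL_{\ell+1}(\L)$ modulo scalars), combined with \cref{simple}/\cref{MR0} identifying $\Pi^+(F)$ with the image of $L_F$ on the residue; but the inductive argument via \cref{corgate} is cleaner and I would present that as the main proof.
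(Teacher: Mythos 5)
Your opening reduction is fine: for connected cotype, self-opposite $\typ(F)$ forces the symmetric form $\{1,\ldots,k,r-k+1,\ldots,r\}$, which is polar closed, so \cref{caseAn} gives $n(\typ(F))=1$ in all cases and $\Pi(F)=\Pi^+(F)$; this matches the paper. The core of your argument, however, has a genuine gap. The simplex $F\setminus F_K$ is a single vertex of $\Res_\Delta(F_K)$, and its cotype \emph{inside} $\Res_\Delta(F_K)$ is the whole interval $I\setminus J$, of rank $\ell$; it is a panel of $\Res_\Delta(F_K)$ only when $\ell=1$. So \cref{MR3} does not apply for $\ell\geq 2$, and your parenthetical claim that $\Res_\Delta(F)$ is a ``rank-1 residue'' is false: it is $\PG(\ell,\L)$. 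Consequently the hypothesis of \cref{corgate} --- that $\Pi^+_K(F)$ is the \emph{full} linear type-preserving group of $\Res_\Delta(F)$, namely $\PGL_{\ell+1}(\L)$, not $\PGL_2(\L)$ --- is never verified; for $\ell\geq 2$ it is exactly the statement still to be proven, merely transplanted into the smaller building $\Res_\Delta(F_K)$. As written, the argument proves the theorem only in the case $\ell=1$, where it reduces to \cref{MR3} itself.

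What \cref{corgate} actually buys (and what the paper does) is a reduction to the base case where the simplex is a single vertex of extreme type, i.e.\ a hyperplane $H$ of $\PG(\ell+1,\L)$, still of cotype $\mathsf{A}_\ell$. By \cref{simple} (i.e.\ \cref{MR0}), the special projectivity group of that vertex is the permutation group induced on $H$ by the stabiliser of $H$ in the \emph{little} projective group $\PSL_{\ell+2}(\L)$ of $\Res_\Delta(F_K)$. The essential remaining content, absent from your proposal, is that this stabiliser induces all of $\PGL_{\ell+1}(\L)$ on $H$. Over a skew field $\PSL_{n}(\L)\neq\PGL_{n}(\L)$ in general, so this is not formal: the paper proves it via the Dieudonn\'e determinant, extending the matrix $M$ of an arbitrary element of $\PGL_{\ell+1}(\L)$ acting on $H$ by a scalar block $d^{-1}$, where $d$ represents the Dieudonn\'e-determinant coset of $M$ in $\L^\times$ modulo commutators, so that the extended matrix has trivial Dieudonn\'e determinant and hence lies in $\PSL_{\ell+2}(\L)$. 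Your ``alternative route'' through the explicit Levi factor suffers from the same omission: the block matrices in the preliminaries describe $L_F$ inside $\PGL_{r+1}(\L)$, whereas $\Aut^+(\Delta)=\PSL_{r+1}(\L)$, and the claim that the middle block still exhausts $\GL_{\ell+1}(\L)$ modulo scalars under the constraint of trivial Dieudonn\'e determinant is precisely the compensation argument above, which must be made explicitly.
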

\begin{proof}
Applying \cref{MR0} and \cref{corgate}, it suffices to show that the stabiliser $G$ of a hyperplane $H$ of $\PG(r,\L)$ in $\PSL_{r+1}(\L)$ acts on $H$ as $\PGL_r(\L)$. Let $g$ be an arbitrary element of $\PGL_{r}(\L)$ acting on $H$. Then we can represent $g$ with respect to an arbitrarily chosen basis $B$ in $H$ with an $r\times r$ matrix $M$. We have to find a member $g^*\in \PSL_{r+1}(\L)$ inducing $g$ in $H$. We can extend $B$ to a basis $B^*$ of $\PG(r,\L)$ by adding one point $p_0\notin H$ and a suitable unit point. Let $d$ belong to the coset of the (multiplicative) commutator subgroup $C$ of $\L^\times$ given by the Dieudonn\'e determinant of $M$ (see \cite{Dieu:43}). Then the block matrix $M^*:=\begin{pmatrix} d^{-1} & 0\\ 0 & M\end{pmatrix}$ represents a member $g^*$ of $\PGL_{r+1}(\L)$ fixing $p_0$, stabilising $H$ and inducing $g$ in $H$. Moreover, by the properties of the Dieudonn\'e determinant, in particular those established in the proof of \cite[Theorem 1]{Dieu:43}, the determinant of $M^*$  is equal to the product of the coset $d^{-1}C$ and the coset $\det M$. By the definition of $d$, this product is exactly $C$, and so $g^*\in\PSL_{r+1}(\L)$. The proof is complete. 
\end{proof}

\subsection{Hyperbolic polar spaces}
We first prove some lemmas. When we consider residues of vertices of type $1$, that is, the points of the corresponding polar space, we will aim to apply \cref{triangles}. \cref{3.30} already tells us that we can find a point opposite $4$ arbitrarily given points if the underlying field has order at least $4$. To handle the case with the field $\F_3$, we recall the following slightly more general results for hyperbolic quadrics, proved in \cite{lines}.

\begin{lem}\label{hyppoints}\label{MSStriangle}
If every line of a hyperbolic quadric $Q$ of rank at least $3$ contains exactly $s+1$ points, then 
\begin{compactenum}[$(i)$] \item there exists a point non-collinear to each point of an arbitrary set $T$ of $s+1$ (distinct) points, except if these points are contained in a single line, and
\item if $Q$ has even Witt index $2d$, then there exists a maximal singular subspace opposite each member of an arbitrary set $T$ of $s+1$ (distinct) maximal singular subspaces of common type, except if these maximal singular subspaces contain a common singular subspace of codimension $2$ in each.  
\end{compactenum}
\end{lem}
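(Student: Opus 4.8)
The plan is to translate the statement into the ambient projective space $\PG(V)$ carrying the quadric $Q$, and to recognise both exceptional configurations as critical coverings. Write $p^{\perp}$ also for the tangent hyperplane of $\PG(V)$ at a point $p\in Q$, so that, for $x\in Q$, one has $x\perp p$ (collinearity in $Q$) exactly when $x$ lies on the hyperplane $p^\perp$ of $\PG(V)$. Thus in $(i)$ I seek a point of $Q$ outside $\bigcup_i p_i^\perp$, and in $(ii)$ a maximal singular subspace meeting none of the $M_i$. For the easy direction I would first record that the exceptions genuinely fail: if $p_1,\dots,p_{s+1}$ fill a line $L$ of $Q$, then the tangent hyperplanes $p_i^\perp$ are precisely the $s+1$ members of the pencil through the codimension-$2$ space $L^\perp$, which cover all of $\PG(V)$; equivalently, by the gate property every $x\in Q$ is collinear to the unique $p_i=\proj_L(x)$. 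The analogous computation yields the failure in $(ii)$ when the $M_i$ share a common subspace of codimension $2$.

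For the main direction of $(i)$ I would argue by induction on the rank together with a line count. Given a line $L\subseteq Q$ with $L\not\subseteq p_i^\perp$ for all $i$, each $p_i^\perp$ meets $L$ in a single gate point, so $\bigcup_i p_i^\perp$ covers at most $s+1$ of the $s+1$ points of $L$; hence $L$ carries a point non-collinear to all $p_i$ unless these gates are pairwise distinct. If some two of the $p_i$ are non-collinear I would exploit this freedom, passing through the residue of a well-chosen point (via \cref{Tits}) to lower the rank and force a coincidence of gates. The remaining case is that the $p_i$ are pairwise collinear, hence span a singular subspace $\pi$ of projective dimension at least $2$ (as they do not lie on a line); there a point $x$ avoids all $p_i$ precisely when the hyperplane $x^\perp\cap\pi$ misses every $p_i$, and the existence of such a hyperplane is the classical fact that $s+1$ points of $\PG(2,s)$ which do not form a line are not a blocking set. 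This blocking-set input is exactly where the tightness of the count ``$s+1$ points per line'' bites, and I expect it to be the crux of $(i)$.

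Part $(ii)$ I would treat by the same scheme carried out in the graph $\Gamma_{\{d\}}$ of maximal singular subspaces rather than on a single line, the common codimension-$2$ subspace playing the role of the common line. Starting from a maximal singular subspace $W$ disjoint from a prescribed subcollection (whose existence for large line size is Claim 5, and which I would establish in the remaining small case by the same gate count), I would adjust $W$ along adjacencies $W\sim W'$ of codimension $2$, using Claim 6 to connect competing choices and \cref{Tits} to descend to a point-residue where the problem becomes an instance of $(i)$ for a hyperbolic quadric of rank $2d-1$. The induction terminates in a low-rank polar space where opposition of maximal subspaces is read off directly. As in $(i)$, the only obstruction to forcing disjointness is the critical configuration, here precisely a common codimension-$2$ subspace; verifying that this is the \emph{only} obstruction, again against the tight count $s+1$, is the main obstacle and the technical heart of the argument, and is where one must again invoke the finite blocking-set characterisation (the statement being immediate once the line size exceeds the relevant threshold, cf.\ \cref{3.30}).
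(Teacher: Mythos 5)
You should first be aware that the paper contains no proof of this lemma to compare against: it is explicitly ``recalled'' from the reference [lines] (Busch--Van Maldeghem, \emph{A characterisation of lines in finite Lie incidence geometries}, listed as in preparation), so the comparison can only be of your proposal against the statement itself. Judged on its own merits, the parts you actually carry out are sound: the verification that the exceptional configurations genuinely fail is correct (the tangent hyperplanes at the points of a line $L$ form the pencil through $L^\perp$ and cover everything), and the pairwise-collinear case of $(i)$ is handled correctly by the Bose--Burton blocking-set theorem applied inside the singular subspace $\pi=\langle T\rangle$, together with the standard fact that every hyperplane $H$ of $\pi$ occurs as $x^\perp\cap\pi$ for some point $x$ of $Q$.

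However, the two cases that constitute the real content of the lemma are not proved. In $(i)$, the case where two points of $T$ are non-collinear is dispatched with ``exploit this freedom \dots force a coincidence of gates,'' which is a plan, not an argument, and this case cannot be routine: the count is tight, since $(s+1)\,|p^\perp\cap Q|$ strictly exceeds $|Q|$, so no union bound can work, and the natural induction on rank via a point residue (which is what your appeal to \cref{Tits} amounts to) breaks at rank $2$: in a grid $Q^+(3,s)$, any $s+1$ points meeting every line of one ruling --- for instance a diagonal, which is pairwise non-collinear --- have perps covering the whole grid without being a line, so statement $(i)$ is false there. Any residue induction therefore needs a separately proven rank-$3$ base case and a mechanism ensuring the projected configuration in the residue is not such a covering set; neither appears in your sketch. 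Part $(ii)$ is entirely programmatic: Claims 5 and 6 of the paper concern \emph{two} given maximal singular subspaces, and extending them to $s+1$ subspaces is precisely the difficulty (for $s+1$ subspaces the conclusion can genuinely fail, which is what the exceptional configuration shows). Moreover, descending to the residue of a point $x\notin\bigcup_i M_i$ does not produce ``an instance of $(i)$'': the $M_i$ project to maximal singular subspaces $\langle x, x^\perp\cap M_i\rangle/x$ of the \emph{opposite} type in a hyperbolic quadric of odd rank $2d-1$, and one must find a maximal singular subspace of the other type disjoint from all of them --- a mixed-type disjointness statement that is not part $(i)$ and itself requires proof. Finally, your closing parenthetical that the statement is ``immediate once the line size exceeds the relevant threshold, cf.\ \cref{3.30}'' misreads the situation: the lemma fixes $|T|=s+1$ equal to the line size, exactly the threshold at which \cref{3.30} (which needs thickness at least $|T|+1$) gives nothing, so there is no easy regime. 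In short, the degenerate configurations are fine, but the technical heart of both $(i)$ and $(ii)$ --- which you yourself flag as the main obstacle --- is missing.
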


\begin{nota} \label{Q0} For a hyperbolic quadric $Q$ of Witt index $r$, associated to the quadratic form $g\colon V\to \K$ with associated bilinear form  $f:V\times V\to\K$, we denote by $\PGO_{2r}(\K)$ the group of all elements of $\PGL_{2r}(V)$ preserving $f$ and $g$. The unique subgroup of index $2$ preserving each class of maximal singular subspaces will be denoted by $\PGO^\circ_{2r}(\K)$. Note that $\PGO^\circ_6(\K)$ is isomorphic to $\PSL_4(\K,2)$. \end{nota} 

A \emph{parabolic polarity} of $Q$ is the involution fixing a given parabolic subquadric $P$ of Witt index $r-1$ and interchanging each two maximal singular subspaces of $Q$ containing a common maximal singular subspace of $P$. Each parabolic polarity belongs to $\PGO_{2r}(\K)$, as in $V$, it is given by $V\to V\colon v\mapsto v-\frac{f(v,w)}{g(w)}w$, for some $w\in V$ with $g(w)\neq 0$.  

Since a parabolic subquadric of Witt index $r-1$ of a given hyperbolic quadric of Witt index $r$ defines a geometric hyperplane, one deduces that:

\begin{exm}\label{exDnBn}
The set of parabolic polarities of a given hyperbolic quadric, is geometric.
\end{exm}

The following lemma is a reformulation of Theorem~1.5.1 of \cite{Che:54}.
\begin{lem}\label{PP=PGO}
The group $\PGO_{2r}(\K)$ is generated by the parabolic polarities.
\end{lem}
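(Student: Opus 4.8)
The plan is to recognise the lemma as a projective form of the Cartan--Dieudonné theorem. Write $O(V,g)$ for the group of linear isometries of $(V,g)$; since $\PGO_{2r}(\K)$ is the image in $\PGL_{2r}(\K)$ of this isometry group (this is the content of the remark that no proper similitudes occur), it suffices to treat the linear statement. By the formula recalled above, each parabolic polarity is the image in $\PGL_{2r}(\K)$ of the reflection
\[\tau_w\colon v\mapsto v-\frac{f(v,w)}{g(w)}\,w,\qquad g(w)\neq 0,\]
and a direct computation using $g(v-\lambda w)=g(v)-\lambda f(v,w)+\lambda^2 g(w)$ shows $\tau_w\in O(V,g)$. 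Hence it is enough to prove that $O(V,g)$ is generated by the reflections $\tau_w$ with $g(w)\neq 0$; passing to images in $\PGL_{2r}(\K)$ then yields the lemma. Denote by $G\leq O(V,g)$ the subgroup they generate.

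The engine is the \emph{swap identity}: if $v,v'$ are anisotropic with $g(v)=g(v')$ and $v-v'$ anisotropic, then $f(v,v-v')=g(v-v')$, so $\tau_{v-v'}$ interchanges $v$ and $v'$ (and this holds in every characteristic). First I would use this to show that $G$ acts transitively on the anisotropic vectors of any fixed nonzero norm $c$. Given two such $v,v'$, one reflection suffices if $v-v'$ is anisotropic; if $v-v'$ is isotropic, I would route through an intermediate anisotropic vector $v''$ with $g(v'')=c$ and both $v-v''$ and $v'-v''$ anisotropic. Such $v''$ exists because the Witt index $r\geq 3$ assumed throughout this subsection forces $\dim V\geq 6$: the vectors to be avoided lie on a bounded number of hyperplane sections and subquadrics, which cannot cover the whole anisotropic locus, exactly as in the counting arguments of \cref{polspace}.

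With transitivity established, I would finish by induction on $\dim V$. Given $\phi\in O(V,g)$, pick an anisotropic vector $v$; by transitivity there is $\rho\in G$ with $\rho\phi$ fixing $v$, hence stabilising the nondegenerate subspace $v^\perp$. By the induction hypothesis $(\rho\phi)|_{v^\perp}$ is a product of reflections of $v^\perp$, and each reflection $\tau_u$ with $u\in v^\perp$ anisotropic extends to the reflection $\tau_u$ of $V$ fixing $v$, so lies in $G$. Thus $\rho\phi\in G$, whence $\phi\in G$ and $G=O(V,g)$.

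The delicate point, and the main obstacle, is characteristic $2$. There $f$ is alternating, so $\tau_w(w)=w-\frac{f(w,w)}{g(w)}w=w$ and the maps $\tau_w$ are orthogonal transvections rather than genuine reflections; the swap identity and the induction survive verbatim, but two adjustments are needed. First, the characteristic-free production of the routing vector $v''$ is essential, since the clean argument ``$g(v-\phi v)+g(v+\phi v)=4g(v)\neq 0$'' collapses -- this is why I extract $v''$ directly from the geometry. Second, in characteristic $2$ the section $v^\perp$ carries a degenerate polar form with radical $\langle v\rangle$, so that every anisotropic vector of each section still yields a nontrivial transvection I would recast the induction as a reduction of the Witt index $r\mapsto r-1$ by fixing a hyperbolic plane rather than a single anisotropic vector, the hypothesis $r\geq 3$ ruling out the small exceptional configurations (such as $O_4^+(\F_2)$). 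This characteristic-$2$ bookkeeping is the only genuinely subtle ingredient; everything else is the classical reflection-swapping argument.
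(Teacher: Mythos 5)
Your strategy is genuinely different from the paper's and, in characteristic different from $2$, it is correct. You identify the lemma as the projective form of the Cartan--Dieudonn\'e theorem: with the paper's definitions, $\PGO_{2r}(\K)$ is the image in $\PGL_{2r}(\K)$ of the isometry group $O(V,g)$, and the parabolic polarities are exactly the images of the reflections $\tau_w$ with $g(w)\neq 0$, so it suffices to show that these reflections generate $O(V,g)$; your swap identity, the transitivity statement, and the induction on $\dim V$ constitute the classical proof of that fact. The paper argues quite differently and stays inside its building-theoretic toolkit: it first shows, by an explicit rank-$2$ matrix computation, that all axial elations are products of parabolic polarities, concludes that the little projective group is contained in the generated group, and then reduces an arbitrary element of $\PGO_{2r}(\K)$ modulo that group to a torus element $\diag(a_1,a_1^{-1},1,\ldots,1)$, which is exhibited as a product of two parabolic polarities. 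Your route is more conceptual; the paper's route avoids any characteristic-$2$ case distinction (though it, too, defers finite fields to the Atlas \cite{Atlas}).

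The genuine gap is characteristic $2$, and concretely $\K=\F_2$, which the statement of the lemma does not exclude. Your proposed repair --- induct on the Witt index by fixing a hyperbolic plane, with the hypothesis $r\geq 3$ ``ruling out the small exceptional configurations (such as $O_4^+(\F_2)$)'' --- does not work as written, because the induction lowers the Witt index one unit at a time and therefore inevitably passes through Witt index $2$, where over $\F_2$ everything breaks: in the $4$-dimensional hyperbolic $\F_2$-space the transvections generate a proper subgroup of the orthogonal group (this is precisely Dieudonn\'e's exception), and, worse for your argument, the transvections fixing a given isotropic vector $e$ fail to act transitively on the four hyperbolic completions of $e$ (one checks they have two orbits of size two). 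Consequently, already for $\K=\F_2$ and $r=3$ your inductive step is vacuous: an isometry of the $6$-dimensional space fixing a hyperbolic pair can restrict, on the perp of that pair, to an element that is not a product of transvections of that $4$-space, even though generation does hold in dimension $6$ (there $O_6^+(\F_2)\cong S_8$ with the transvections as the transpositions --- but that requires a separate, direct verification, not your induction). So you must either (i) restrict to $|\K|\geq 3$, which suffices for every application of the lemma in the paper and parallels the paper's own deferral of finite fields to \cite{Atlas}, (ii) anchor the induction at Witt index $3$ with a directly verified base case over $\F_2$, or (iii) cite the generation theorem from \cite{Dieu:62}. A smaller point of the same nature: your counting argument producing the routing vector $v''$ invokes Witt index at least $3$, yet the induction descends into subspaces where this is unavailable; in characteristic $\neq 2$ the identity $g(v-\phi v)+g(v+\phi v)=4g(v)\neq 0$ that you mention covers those stages, but this confirms that the characteristic-$2$ case is not mere bookkeeping.
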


\begin{theorem}\label{caseD}
Let $\Delta$ be the building (of rank $r\geq 4$) associated to a hyperbolic quadric $Q$ of Witt index $r\geq 4$ over the field $\K$. Let $F$ be a simplex of $\Delta$ such that $\Res_\Delta(F)$ is irreducible. Then $\Pi(F)$ and $\Pi^+(F)$ are given as in \emph{\cref{Drtabel}}. 
In Case~\emph{(A$*$)}, the permutation group   $\PGL_r(\K,2).2$ denotes the extension of  $\PGL_r(\K,2)$ by a symplectic polarity acting on $\PG(r-1,\K)$ (and coincides with the group generated by all symplectic polarities). A long hyphen in the table in the column of $\Pi(F)$ means that $\typ(F)$ is not self-opposite and so $\Pi(F)$ is trivially isomorphic to $\Pi^+(F)$ --- it must be read as a ``bysame'' symbol. Grey rows correspond to projectivity groups that are not necessarily the full linear groups. 
\end{theorem}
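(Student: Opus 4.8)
The plan is to determine, case by case according to the type of the irreducible residue $\Res_\Delta(F)$, the two groups $\Pi^+(F)$ and $\Pi(F)$, and to match them against the entries of \cref{Drtabel}. Throughout I would use \cref{simple} to identify $\Pi^+(F)$ with the permutation group induced on $\Res_\Delta(F)$ by the stabiliser of $F$ in $\Aut^+(\Delta)=\mathsf{P\Omega}_{2r}(\K)$, and \cref{triangles} to reduce the computation of both $\Pi^+(F)$ and $\Pi(F)$ to the geometrically characterised set $\Pi_3(F)$ of length-$3$ self-projectivities. To invoke \cref{triangles} one needs a simplex of type $\typ(F)$ opposite any four prescribed ones; for $|\K|\geq 4$ this is furnished by \cref{3.30}, while $\K=\F_2$ is settled directly by \cref{simple}. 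The remaining field $\F_3$ is the delicate one: here I would appeal to \cref{MSStriangle} together with Claims~1--7 of \cref{polspace} to produce the required opposite simplices, disposing of the listed exceptional configurations (collinear points, or maximal singular subspaces sharing a codimension-$2$ subspace) by connectivity of the relevant graphs $\Gamma_J'$.

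The projective-space residues, i.e.\ those with $\Res_\Delta(F)$ of type $\mathsf{A}_\ell$, split into two subfamilies. If such a residue is contained in a larger residue of type $\mathsf{A}_{\ell+1}$ (equivalently, the path $I\setminus\typ(F)$ can be extended inside the diagram without meeting both fork nodes), then I would use the gate reduction \cref{gate2}/\cref{corgate} to pass to a single vertex of a larger $\mathsf{A}$-residue and conclude with \cref{caseA} that the group is the full $\PGL_{\ell+1}(\K)$; whether $\Pi(F)=\Pi^+(F)$ or $[\Pi(F):\Pi^+(F)]=2$ is then decided by the type rule $i'=(i^\equiv)^{\equiv_{F'}}$ of \cref{Tits} together with \cref{O1}, \cref{MR1} and \cref{MR2}. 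The unique projective-space residue that cannot be so extended is the maximal singular subspace $\mathsf{A}_{r-1}$ (Case~(A$*$), arising for $\typ(F)\in\{\{r-1\},\{r\}\}$): here \cref{Tits} shows that a single perspectivity $F\per F'$ is the duality of $\PG(r-1,\K)$ induced by the polarising form, so even projectivities are collineations, and by \cref{simple} they are exactly the elements of $\mathsf{P\Omega}_{2r}(\K)_F$ restricted to the maximal singular subspace. The key point is that the stabiliser in $\Omega$ acts on such a subspace precisely as the matrices of square determinant, giving $\Pi^+(F)=\PGL_r(\K,2)$; and for $r$ even, when $\typ(F)$ is self-opposite, the odd perspectivities supply a duality, which one checks to be a symplectic polarity of square determinant, so that $\Pi(F)=\PGL_r(\K,2).2$.

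For residues of type $\mathsf{D}_\ell$ (where $F$ is a flag up to a singular subspace $U$ and $\Res_\Delta(F)=U^\perp/U$ is a hyperbolic polar space of rank $\ell$) I would argue geometrically inside the polar space. Since every perspectivity is an isomorphism of polar spaces, it induces an isometry --- and not merely a similitude --- of the quadratic form inherited from the ambient quadric, so that $\Pi(F)\leq\PGO_{2\ell}(\K)$; this is the content of exception~(i) of the introduction. For the reverse inclusion I would realise each parabolic polarity of $U^\perp/U$ as a projectivity and invoke \cref{PP=PGO} (parabolic polarities generate $\PGO_{2\ell}(\K)$), using \cref{MSStriangle} and Claims~4--7 to produce the pairwise opposite maximal singular subspaces and lines needed to assemble the relevant projective $3$-cycles; restricting to the family-preserving (type-preserving) elements then isolates $\Pi^+(F)$ and yields the groups recorded in \cref{Drtabel}. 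Finally, the value of $n(\typ(F))$, hence whether the family-swapping coset of $\PGO_{2\ell}(\K)$ over its index-two subgroup appears in $\Pi(F)$, is read off \cref{MR1} and \cref{MR2}: it equals $1$ exactly for the polar-closed types $\typ(F)=\{1,\dots,2k\}$ and $2$ otherwise.

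The main obstacle is the type-$\mathsf{D}_\ell$ case. Proving the inclusion $\Pi(F)\leq\PGO_{2\ell}(\K)$, rather than into the larger similitude group, requires showing that the identifications made by $\proj^{F}_{F'}$ transport the ambient form without a scalar factor; and proving the reverse inclusion requires reconstructing every parabolic polarity from projectivities, which over $\F_3$ must circumvent the exceptional configurations of \cref{MSStriangle} via the connectivity Claims~4--7. The determinant-square identification in Case~(A$*$), resting on the fine structure of the stabilisers in $\mathsf{P\Omega}_{2r}(\K)$, is the other point that demands care.
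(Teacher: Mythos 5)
Your overall architecture coincides with the paper's: a case split by the type of $\Res_\Delta(F)$, with $\F_2$ settled by \cref{simple}, $|\K|\geq 4$ by \cref{3.30} and $\F_3$ by \cref{MSStriangle}; the $\mathsf{A}_\ell$-residues that extend to larger $\mathsf{A}$-residues handled by \cref{gate2}/\cref{corgate} plus \cref{caseA}; and the $\mathsf{D}$-residues handled by realising parabolic polarities as self-projectivities and invoking \cref{PP=PGO}, with \cref{MR1}/\cref{MR2} deciding $[\Pi(F):\Pi^+(F)]$. But there are two genuine gaps. The first is your claim that ``the unique projective-space residue that cannot be so extended is the maximal singular subspace $\mathsf{A}_{r-1}$''. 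This is false: the $\mathsf{A_3}$-residue of cotype $\{r-2,r-1,r\}$ contains both fork nodes, so by your own extendability criterion it is excluded from the gate-reduction family, yet it is not an $\mathsf{A}_{r-1}$. This is exactly row (A3) of \cref{Drtabel}, a grey (exceptional) row, where the groups are $\PGO_6^\circ(\K)\cong\PSL_4(\K,2)$ and $\PGO_6(\K)$ rather than $\PGL_4(\K)$ and $\PGL_4(\K).2$; so this case is either misclassified (giving a wrong entry) or simply unproven. It could in principle be absorbed into your $\mathsf{D}_\ell$ paragraph with $\ell=3$ via the Klein correspondence --- essentially what the paper does, combining its Case (A$^*$) argument at rank $3$ with \cref{gate2} and the isomorphism $\PGO_6^\circ(\K)\cong\PSL_4(\K,2)$ --- but you never claim rank $3$ there, and the Claims of \cref{polspace} you lean on are stated for rank at least $4$.

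The second gap is that in Case (A$^*$) the entire content lies in what you call ``the key point'': that the stabiliser of a maximal singular subspace in $\mathsf{P\Omega}_{2r}(\K)$ induces on it precisely $\PGL_r(\K,2)$. You assert this rather than prove it; yet by \cref{simple} this assertion \emph{is} the table entry to be proven --- it is exactly the Levi-action statement the theorem is designed to establish --- so it cannot serve as an input unless you supply an independent proof (say, a spinor-norm computation), which you do not. The paper avoids this circularity by working entirely on the projectivity side: it shows that every length-$3$ self-projectivity $M_1\per M_2\per M_3\per M_1$ is a duality all of whose points are absolute, hence a symplectic polarity by Lemma~3.2 of \cite{Tem-Tha-Mal:11}, that every symplectic polarity arises this way, and then \cref{triangles} together with \cref{MSStriangle} identify $\Pi(F)$ with the group generated by the symplectic polarities and $\Pi^+(F)$ with the square-determinant group; your own ``one checks'' for the duality coset is this same unproven step. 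Note finally that your parity-uniform treatment reproduces row (A$^{**}$) ($r$ odd, answer $\PGL_r(\K)$) only via the unremarked coincidence $\PGL_r(\K,2)=\PGL_r(\K)$ for odd $r$; the paper cannot reach that case by triangles at all (for odd $r$ opposite maximal singular subspaces lie in different families, so no length-$3$ self-projectivities exist) and instead gives a separate explicit length-$4$ projectivity acting as a homology $\diag(k,k,\ell,\ldots,\ell)$ whose determinant can be prescribed. A smaller point of the same kind: your justification that a perspectivity between $\mathsf{D}$-residues is an isometry ``since it is an isomorphism of polar spaces'' is a non sequitur (an isomorphism only preserves the form up to similitude); the correct reason, as in the paper, is that by \cref{MR0}/\cref{L1} even projectivities are induced by elements of $\mathsf{P\Omega}_{2r}(\K)$, which preserve the ambient form on the nose.
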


\begin{table}[tbh]
\renewcommand{\arraystretch}{1.2}
\begin{tabular}{|c|c|c|c|c|c|}\hline
Reference& $\Res_\Delta(F)$ &  $\cotyp(F)$ & $\Pi^+(F)$&$\Pi(F)$\\ \hline
(A1) & $\mathsf{A_1}$ & & $\PGL_2(\K)$ & $\PGL_2(\K)$\\ \hline
(A3) & \cellcolor[gray]{0.8}$\mathsf{A_3}$ & \cellcolor[gray]{0.8}$\{r-2,r-1,r\}$ &\cellcolor[gray]{0.8}$\PGO_6^\circ(\K)$ & \cellcolor[gray]{0.8}$\PGO_6(\K)$\\ \hline
(A) & $\mathsf{A_\ell}$, $2\leq\ell\leq r-2$&$\neq \{r-2,r-1,r\}$& $\PGL_{\ell+1}(\K)$ &$\PGL_{\ell+1}(\K).2$\\ \hline
 (A$^*$) & \cellcolor[gray]{0.8} $\mathsf{A}_{r-1}$, $r\in2\Z$ &\cellcolor[gray]{0.8}  & \cellcolor[gray]{0.8}$\PGL_r(\K,2)$ & \cellcolor[gray]{0.8}$\PGL_r(\K,2).2$\\ \hline
 (A$^{**}$) &  $\mathsf{A}_{r-1}$, $r\in2\Z+1$ &  & $\PGL_r(\K)$ &  ------\\ \hline
(D)& \cellcolor[gray]{0.8}$\mathsf{D}_{r-2\ell}$,  $4\leq r-2\ell\leq r-1$ & \cellcolor[gray]{0.8} &\cellcolor[gray]{0.8} $\PGO^\circ_{2r-4\ell}(\K)$ &\cellcolor[gray]{0.8}$\PGO^\circ_{2r-4\ell}(\K)$  \\ \hline
(D$'$)&\cellcolor[gray]{0.8}$\mathsf{D}_{r-2\ell+1}$,  $4\leq r-2\ell+1\leq r-1$ & \cellcolor[gray]{0.8} & \cellcolor[gray]{0.8}$\PGO^\circ_{2r-4\ell+2}(\K)$ &\cellcolor[gray]{0.8}$\PGO_{2r-4\ell+2}(\K)$  \\ \hline
\end{tabular}\vspace{.3cm}
\caption{Projectivity groups in buildings of type $\mathsf{D}_r$ over $\K$\label{Drtabel}}
\end{table}

\begin{proof}First we notice that, if $\K=\F_2$, then all groups are universal and adjoint (simple) at the same time, so the results follow from \cref{MR0}. Hence we may assume $|\K|\geq 3$.  For ease of notation and language, we will speak about plus-type and minus-type of the maximal singular subspaces of $Q$ to distinguish the two different types (arbitrarily). 

Also, Case (A1) follows from \cref{MR3}, whereas Case (A) follows from \cref{gate2} and \cref{caseA}. We now handle the other, less straightforward, cases.
\medskip

\framebox{Case (A$^*$)} Let $M_1,M_2,M_3$ be three mutual opposite maximal singular subspaces of plus-type. Let $p_1\in M_1$ be arbitrary. The maximal singular subspace $N$ through $p_1$ intersecting $M_2$ in a submaximal singular subspace (that is, a singular subspace of dimension $r-2$) intersects $M_3$ in a point $p_3$, since $N$ is necessarily of minus-type. Hence the maximal singular subspace of minus-type through $p_3$ intersecting $M_1$ in a submaximal singular subspace contains $p_1$. This shows that the projectivity $M_1\per M_2\per M_3\per M_1$ is a duality each point of which is absolute. Lemma~3.2 of \cite{Tem-Tha-Mal:11} implies that it is a symplectic polarity. By conjugation, we can obtain every symplectic polarity of $M_1$ in this way. Applying \cref{triangles} together with \cref{MSStriangle}, Case (A$^*$) follows from \cref{exAnCn} and the fact that the matrix corresponding to a symplectic polarity necessarily has square determinant (and every square can occur). 
\medskip

\framebox{Case (A3)} By \cref{MR0}, every self-projectivity preserves the residual form, hence $\Pi(F)\leq \PGO_6(\K)$. Case (A$^*$) for $r=3$, together with \cref{gate2} and the fact that  $\PGO^\circ_6(\K)$ is isomorphic to $\PSL_4(\K,2)$, conclude this case.
 
 \framebox{Case (A$^{**})$} By \cref{MR0}, $\Pi^+(F)$ contains $\PSL_r(\K)$. Hence it suffices to show that $\Pi^+(F)$ contains an element of $\PGL_r(\K)$ whose corresponding matrix has arbitrary determinant. 
 
Let $M_1$ and $M_3$ be two maximal singular subspaces  of plus-type intersecting in a subspace $U_{13}$ of dimension $r-3$. Let $M_2$ be a maximal singular subspace opposite both $M_1$ and $M_3$ (then $M_2$ has plus-type). Let $U_{24}$ be a subspace of $M_2$ of dimension $r-3$ opposite $U_{13}$. Let $L_{1}$ be the unique line of $M_1$ collinear to $U_{24}$. Let $L$ be an arbitrary line in $M_1$ joining a point $p_{13}\in U_{13}$ with some point $p_{1}\in L_{1}$. Pick $p,p'\in L\setminus\{p_{13},p_1\}$ and suppose $p\neq p'$ (this is possible as we assume $|\K|\geq 3$). 

Let $M$ be the maximal singular subspace of plus-type containing $p$ and intersecting $M_2$ in a hyperplane. Denote $W=U_{24}\cap M$. Then $W$ has dimension $r-4$ and is collinear to $L$. The intersection of $M$ and $M_3$ is a point $q$, as both have the same type. As both $p$ and $p_{13}$ are collinear to $q$, also $p'$ is collinear to $q$. Hence $p'$ is collinear to $\<q,W\>$, and $\<p',q,W\>$ is a singular subspace of dimension $r-2$. Hence there is a unique maximal singular subspace $M'$ of plus-type containing $p',q$ and $W$. It obviously intersects $M_1$ in $p'$ and $M_3$ in $q$. There is a unique maximal singular subspace $M_4$ containing $U_{24}$ and intersecting $M'$ in a hyperplane (and hence it is of minus-type). Now with this set-up, one verifies that the projectivity $M_1\per M_2\per M_3\per M_4\per M_1$ pointwise fixes both $U_{13}$ and $L_1$, and maps $p$ to $p'$. Choosing a basis in $U_{13}\cup L_1$, the matrix of such a homology in $M_1$  is diagonal of the form $\diag(k,k,\ell,\ell,\ldots,\ell)$, and the arbitrariness of $p'$ implies that $k$ and $\ell$ are also arbitrary. Set $r=2s+1$. Putting $k=\ell^{-s+1}$, we obtain the determinant $\ell^{-2s+2+2s-1}=\ell$. Since $\ell$ is arbitrary, the assertion follows.       

\framebox{Case (D$'$)} First set $\ell=1$, that is, $r-2\ell+1=r-1$ and $F$ is just a point of the polar space or hyperbolic quadric $Q$. Let $p_1,p_2,p_3$ be three mutual opposite points. Since $p_1^\perp\cap p_2^\perp$ is a hyperbolic quadric of rank $r-1$, we have that $p_1^\perp\cap p_2^\perp\cap p^\perp_3$ is either a parabolic subquadric, or a degenerate quadric. In the latter case, $\{p_1,p_2,p_3\}^{\perp\perp}$ is a degenerate plane conic containing $p_1,p_2,p_3$, and hence $p_3$ is collinear to either $p_1$ or $p_2$, a contradiction. Consequently $p_1^\perp\cap p^\perp_2\cap p^\perp_3$ is a parabolic quadric and the projectivity $p_1\per p_2\per p_3\per p_1$ is a parabolic polarity. Clearly, every parabolic polarity of $\Res_\Delta(p_1)$ can be obtained this way. Then \cref{triangles}, \cref{hyppoints}, \cref{exDnBn} and \cref{PP=PGO} yield $\Pi(p_1)=\PGO_{2r-2}(\K)$ and $\Pi^+(p_1)=\PGO^\circ_{2r-2}(\K)$.  

Now let $\ell$ be arbitrary (but of course $4\leq r-2\ell+1\leq r-1$). Since the stabiliser of $F$ in $\PGO_{2r}(\K)$ obviously preserves the residual form (in $\Res_\Delta(F)$), we see that $\Pi^+(F)$ is a subgroup of $\PGO^\circ_{2r-4\ell+2}(\K)$, and hence coincides with it by \cref{gate2} and the case $\ell=1$. In order to show $\Pi(\F)=\PGO_{2r-4\ell+2}(\K)$, we only need to exhibit a parabolic polarity as a self-projectivity in $\Res_\Delta(F)$. This is done similarly as in the previous paragraph for the case $\ell=1$: choose three mutual opposite singular subspaces $U_1,U_2,U_3$ of dimension $2\ell-1$ contained in a parabolic subquadric obtained from $Q$ by intersecting $Q$ in its ambient projective space with a subspace of dimension $4\ell$. Suppose also $U_1\in F$. Then, as before, the projectivity $U_1\per U_2\per U_3\per U_1$ is a parabolic polarity of $\Res_\Delta(F)$. 

\framebox{Case (D)} This is completely similar to the case $\ell>1$ of Case (D$'$), noting that $\Pi^+(F)$ coincides with $\Pi(F)$ by \cref{MR1}.
\end{proof}

\subsection{Exceptional cases}
Also here, we first prove some lemmas. First we recall the following result from \cite{lines2} in order to deal with the case of a field of order $3$ for simplices of type $7$ in $\mathsf{E_7}$. 
\begin{lem}\label{E77points}
If every line of a parapolar space $\Gamma$ of type $\mathsf{E_{7,7}}$ contains exactly $s+1$ points, then there exists a point at distance $3$ from each point of an arbitrary set $T$ of $s+1$ (distinct) points, except if these points are contained in a single line. 
\end{lem}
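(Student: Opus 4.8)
The plan is to translate the statement into a problem about covering the point set by geometric hyperplanes and then to reduce it, inside a suitable symp, to the polar space version recorded in \cref{hyppoints}. Since $\Gamma$ has diameter $3$ and contains no special pairs, two points are at distance $3$ precisely when they are opposite, and for a point $p$ I write $H_p$ for the set of points at distance at most $2$ from $p$. First I would record that $H_p$ is a geometric hyperplane, i.e.\ on any line $L$ the points not opposite $p$ form either a single point or all of $L$. This is the $\mathsf{E_{7,7}}$-analogue of \cref{E8prop}$(ii)$ and can be read off an apartment, or deduced from \cref{point-sympE7} applied to a symp through $L$. As opposition is symmetric, a point $x$ is opposite every point of $T$ if and only if $x\notin\bigcup_{p\in T}H_p$, so the assertion becomes: the $s+1$ hyperplanes $H_{p_1},\dots,H_{p_{s+1}}$ cover the whole point set if and only if $p_1,\dots,p_{s+1}$ lie on a common line.

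The exceptional direction is immediate. If $T$ is contained in a line $L$, then $|T|=s+1$ forces $T=L$; for any point $x$ the geometric hyperplane $H_x$ meets $L$, so some $p\in T$ lies in $H_x$, that is, $x$ is not opposite $p$. Hence no point is opposite all of $T$, as claimed.

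For the main direction the key input is the description in \cref{point-sympE7} of the trace of $H_p$ on a symp $\xi$: if $p$ is close to $\xi$ then $H_p\supseteq\xi$, while if $p\in\xi$ or $p$ is far from $\xi$ the set $H_p\cap\xi$ equals $r^{\perp}\cap\xi$ for a point $r\in\xi$ (namely $r=p$, respectively $r$ the unique point of $\xi$ collinear to $p$). Since symps are convex and are hyperbolic polar spaces of rank $6$, any two of their points are at distance at most $2$ in $\Gamma$; consequently a point of $\xi$ opposite all $p_i$ can only exist when no $p_i$ lies in $\xi$, and none is close to $\xi$. I would therefore aim to produce a symp $\xi_0$ to which every $p_i$ is far. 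For such a $\xi_0$ each trace $H_{p_i}\cap\xi_0$ equals $r_i^{\perp}\cap\xi_0$, with $r_i$ the gate of $p_i$ in $\xi_0$, and \cref{hyppoints}$(i)$, applied to the rank-$6$ hyperbolic polar space $\xi_0$ and the set $\{r_1,\dots,r_{s+1}\}$, yields a point $z\in\xi_0$ non-collinear to all $r_i$ (provided the $r_i$ do not lie on a single line of $\xi_0$); this $z$ is then opposite every $p_i$. When instead all $p_i$ happen to lie in one symp $\xi_0$, I would dualise the argument: \cref{hyppoints}$(i)$ gives a point $x'\in\xi_0$ non-collinear to all $p_i$ (using that $T$ is not on a line of $\xi_0$, which holds since lines of $\xi_0$ are lines of $\Gamma$), and any point far from $\xi_0$ with gate $x'$, which exists by thickness and the transitivity of the stabiliser of $\xi_0$ on its points, is opposite every $p_i$.

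The main obstacle is the existence and correct position of the symp $\xi_0$ in the general configuration: I must show that when $T$ is not contained in a line one can choose a symp with respect to which all $p_i$ are far and whose associated gates $r_i$ are not all collinear. I expect this to follow from a genericity/counting argument in the finite geometry (most symps are far from a prescribed point, and generic gates are non-collinear), or alternatively from a dimensional induction passing to the residue $\Res(p_1)$, which is of type $\mathsf{E_6}$, and invoking the corresponding lower-rank statement. Faithfully translating the hypothesis ``$T$ not on a line of $\Gamma$'' into ``the gates are not on a line of $\xi_0$'' is the delicate point to get right.
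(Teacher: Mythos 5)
First, a point of context: the paper does not actually prove this lemma. It is imported verbatim from the reference \cite{lines} (``A characterisation of lines in finite Lie incidence geometries'', listed as in preparation), exactly as \cref{hyppoints} is, so there is no in-paper proof to compare your argument against. Your proposal therefore has to stand on its own.

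Your reduction framework is sound as far as it goes: the set $H_p$ of points at distance at most $2$ from $p$ is indeed a geometric hyperplane (your derivation via \cref{point-sympE7} applied to a symp through a given line is correct), the exceptional direction (when $T$ is a full line) follows immediately from the hyperplane property, and the mechanism of passing to a symp $\xi_0$ from which every $p_i$ is far, so that $H_{p_i}\cap\xi_0=r_i^{\perp}\cap\xi_0$ for the gates $r_i$, correctly transports the problem into the rank-$6$ hyperbolic polar space where \cref{hyppoints}$(i)$ applies. The dual treatment of the case where all of $T$ lies in one symp is also fine, modulo the routine existence of a far point with prescribed gate.

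However, there is a genuine gap, and you name it yourself: the existence of a symp $\xi_0$ that is far from every $p_i$ \emph{and} whose associated gates $r_1,\dots,r_{s+1}$ do not constitute an entire line of $\xi_0$. This is not a technicality; it is the entire content of the lemma. Note that the truly fatal configuration is precisely when the gates fill up a whole line $M$ of $\xi_0$: then every point of $\xi_0$ is collinear to some point of $M$ by the polar space axiom, so $\xi_0$ contains no point opposite all $p_i$, and the symp was simply badly chosen. (Coincident gates, by contrast, are harmless: fewer than $s+1$ distinct gates can be padded to a set of $s+1$ distinct points not on a line before invoking \cref{hyppoints}$(i)$, whose statement requires exactly $s+1$ distinct points.) Ruling out that \emph{every} admissible symp is badly positioned is where the work lies, and ``a genericity/counting argument'' is not adequate here: the lemma is needed precisely for small fields (the paper invokes it only for $\F_3$, the case $\F_2$ being handled by \cref{MR0}), where union-bound estimates are tightest and must be carried out with explicit constants, or replaced by a structural argument. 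As written, the proposal is an incomplete proof: a correct strategy whose pivotal step is stated as an expectation rather than established.
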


\begin{nota}[Similitudes---Groups of type $\mathsf{D}_n$]\label{Q}
For a hyperbolic quadric $Q$ of Witt index $r$, associated to the quadratic form $g\colon V\to \K$ with associated bilinear form  $f\colon V\times V\to\K$, we denote by  $\overline{\mathsf{PGO}}_{2r}(\K)$ the group of all elements of $\PGL_{2r}(V)$ preserving $f$ and $g$ up to a scalar multiple. It is the complete linear (algebraic) group of automorphisms of $Q$, seen as a building of type $\mathsf{D}_r$. The unique subgroup of  $\overline{\mathsf{PGO}}_{2r}(\K)$ of index $2$ preserving each class of maximal singular subspaces will be denoted by $\overline{\mathsf{PGO}}^\circ_{2r}(\K)$. It is elementary to see that  $\overline{\mathsf{PGO}}_{2r}(\K)$  is obtained from  ${\mathsf{PGO}}_{2r}(\K)$ by adjoining the appropriate \emph{diagonal automorphisms}, that is, if we assume $g$ in standard form (after introducing coordinates) 

\begin{align*} g\colon \K^{2r}\to \K\colon & (x_{-r},x_{-r+1},\ldots,x_{-2},x_{-1},x_1,x_2,\ldots,x_{r-1},x_r)\\ &\mapsto x_{-r}x_r+x_{-r+1}x_{r-1}+\cdots x_{-2}x_2+x_{-1}x_1,\end{align*} 
then we adjoin the linear automorphisms of $Q$ induced by 

 \begin{align*}\varphi_k\colon\K^{2r}\to\K^{2r}\colon & (x_{-r},x_{-r+1},\ldots,x_{-2},x_{-1},x_1,x_2,\ldots,x_{r-1},x_r)\\& \mapsto  (x_{-r},x_{-r+1},\ldots,x_{-2},x_{-1},kx_1,kx_2,\ldots,kx_{r-1},kx_r),\end{align*} for all $k\in\K^\times$ (and we may assume $k$ is not a square as otherwise the given automorphism is already in ${\mathsf{PGO}}_{2r}(\K)$). 
We denote the commutator subgroup of ${\mathsf{PGO}}^\circ_{2r}(\K)$ by $\mathsf{P\Omega}_{2r}(\K)$. The latter is the simple group $\mathsf{D}_{r}(\K)$ of type $\mathsf{D}_r$ over the field $\K$ (see \cite{Dieu:62}). The group obtained from  $\mathsf{P\Omega}_{2r}(\K)$ by adjoining the diagonal automorphisms as above is denoted by $\overline{\mathsf{P\Omega}}_{2r}(\K)$. 
\end{nota}

If $r$ is even and $\K$ is not quadratically closed, then  $\overline{\mathsf{P\Omega}}_{2r}(\K)$ does not coincide with  $\overline{\mathsf{PGO}}^\circ_{2r}(\K)$ as we will demonstrate later (see \cref{omega}).

Let us call \emph{homology} of a hyperbolic quadric $Q$ as in \cref{Q} any automorphism of $Q$ pointwise fixing two opposite maximal singular subspaces. The automorphisms $\varphi_k$, $k\in\K^\times$, above are homologies. If $r$ is even, then there are two types of such according to which kind of maximal singular subspaces is fixed pointwise (if $r$ is odd, then one always pointwise fixes one maximal singular subspace of each type). We now have the following result, which can be proved using standard arguments similarly to, but simpler than, \cref{PP=PGO} and \cref{polE6}.

\begin{lem}\label{PGO}
Let $Q$ be a (non-degenerate) hyperbolic quadric of Witt index $r$ corresponding to the building of type $\mathsf{D}_r$ over the field $\K$. Then the following hold.
\begin{compactenum}[$(i)$]
\item The set of all homologies generates $\overline{\PGO}_{2r}^\circ(\K)$.
\item If $r$ is even, then the set of homologies pointwise fixing two opposite maximal singular subspaces of only one given type generates $\overline{\mathsf{P\Omega}}_{2r}(\K)$. 
\item If $r$ is even, then the homologies pointwise fixing two opposite maximal singular subspaces of only one given type, and the elements of $\PGO_{2r}^\circ(\K)$ together generate $\overline{\PGO}_{2r}^\circ(\K)$.
\item The set of all automorphisms fixing two opposite points $p,q$ and pointwise fixing $p^\perp\cap q^\perp$ generates $\PGO_{2r}^\circ(\K)$.
\end{compactenum}
\end{lem}

Let $U$ and $U'$ be two opposite maximal singular subspaces of a hyperbolic quadric. Then it is well known that every point not contained in either $U$ or $U'$ is contained in a unique line joining a point of $U$ and a point of $U'$. One can use this property to deduces that if a non-trivial homology pointwise fixing $U\cap U'$ fixes a subspace $S$, then either $S\cap U$ and $S\cap U'$ generate $S$. This, in turn, implies:

\begin{exm}\label{exDn}
The set of all non-trivial homologies of a hyperbolic quadric pointwise fixing two opposite maximal singular subspaces of a given type, is geometric. 
\end{exm}

We now introduce some notation concerning the exceptional groups of type $\mathsf{E_6}$. There does not seem to be standard notation (some people use $\tilde{E}$ and $\hat{E}$, others SE$_6(\K)$ for some of the following groups). The following is partly based on \cite{Ste:20}.
\begin{nota}[Groups of type $\mathsf{E_6}$]
Let $V$ be a $27$-dimensional vector space over the commutative field $\K$, written as the direct sum of three $1$-dimensional subspaces and three $8$-dimensional subspaces, each of them identified with a split octonion algebra $\O$ over $\K$. We thus write $V=\K\oplus\K\oplus\K\oplus\O\oplus\O\oplus\O$. Let $\mathfrak{C}:V\rightarrow\K$ be the cubic form defined as
\[\mathfrak{C}(x,y,z;X,Y,Z)=-xyz+xX\overline{X}+yY\overline{Y}+zZ\overline{Z}-(XY)Z-\overline{(XY)Z}.\]
Then we denote by $\GE_6(\K)$ the similitudes of $\mathfrak{C}$, that is, the subgroup of $\mathsf{GL}(V)$ preserving $\mathfrak{C}$ up to a multiplicative constant. The subgroup of $\GE_6(\K)$ preserving $\mathfrak{C}$ is denoted by $\SE_6(\K)$ (and is a subgroup of $\SL(V)$) and the quotients with the  respective centres (consisting of scalar matrices) are $\PGE_6(\K)$ and $\mathsf{PSE}_6(\K)$. The latter is also denoted briefly by $\mathsf{E}_6(\K)$ and is simple. The group $\mathsf{PGE_6}(\K)$ is the full linear group. The group obtained by adjoining a graph automorphism is denoted by $\PGE_6(\K).2$.
\end{nota}

The cubic form $\mathfrak{C}$ above can also be written without the use of octonions, but using the unique generalised quadrangle $\GQ(2,4)$ of order $(2,4)$, that is, polar space of rank $2$ with $3$ points on each line and $5$ lines through each points.  An explicit construction of $\GQ(2,4)$ runs as follows, see Section~6.1 of \cite{FGQ}. Let $\cP'$ be the set of all $2$-subsets of the $6$-set $\{1,2,3,4,5,6,\}$, and define \[\cP=\cP'\cup\{1,2,3,4,5,6\}\cup\{1',2',3',4',5',6'\}.\] Denote briefly the $2$-subset $\{i,j\}$ by $ij$, for all appropriate $i,j$. Let $\cL'$ be the set of partitions of $\{1,2,3,4,5,6\}$ into $2$-subsets and define \[\cL=\cL'\cup\left\{\{i,j',ij\}\mid i,j\in\{1,2,3,4,5,6\}, i\neq j\right\}.\] Then $\Gamma=(\cP,\cL)$ is a model of $\GQ(2,4)$. 

The sets $\{1,2,3,4,5,6\}$ and $\{1',2',3',4',5',6'\}$ have the property that they both do not contain any pair of collinear points, and that non-collinearity is a paring between the two sets. Such a pair of $6$-sets is usually called a \emph{double six}. 

 The following set $\cS$ of lines of $\GQ(2,4)$ is a  \emph{spread}, that is, a partition of the point set $\cP$ into lines: $\cS=$
\[\left\{\{14,25,36\},\!\!\{15,26,34\},\!\!\{16,24,35\},\!\!\{12,2,1'\},\!\!\{23,3,2'\},\!\!\{13,1,3'\},\!\!\{45,4,5'\},\!\!\{56,5,6'\},\!\!\{46,6,4'\}\right\}.\]
We now have the following equivalent description of the cubic form $\mathfrak{C}$, see  Section~$2$ of \cite{Mal-Vic:22}. Let $V$ be the vector space of dimension 27 over $\K$ where the standard basis $B$ is labelled using the elements of $\cP$, say $B=\{e_p\mid p\in\cP\}$. We denote a generic vector $v\in V$ by $\sum_{p\in\cP}x_pe_p$, with $x_p\in\K$. Then  \[\mathfrak{C}(v)=\mathlarger{\mathlarger{\sum}}_{\{p,q,r\}\in\cS} x_px_{q}x_{r}-\mathlarger{\mathlarger{\sum}}_{\{p,q,r\}\in\cL\setminus\cS} x_px_{q}x_{r}.\]

The projective null set of $\nabla\mathfrak{C}$ is a set of points denoted $\cE_6(\K)$, and endowed with the lines contained in it, it is a point-line geometry isomorphic to the Lie incidence geometry of type $\mathsf{E_{6,1}}$ over the field $\K$. There is a special type of graph automorphism, called \emph{symplectic polarity}, which is an involution centralising a split group of type $\mathsf{F_4}$ over the same field $\K$. All symplectic polarities are conjugate (see \cite{Sch-Sas-Mal:18}), and as a consequence of the main results of \cite{Mal:12}, every graph automorphism having a fix set isomorphic to the fix set of a symplectic polarity, is a symplectic polarity. Hence:

\begin{exm}\label{exE6}
The set of symplectic polarities of a building of type $\mathsf{E_6}$ is geometric. 
\end{exm}

With these constructions and notation at hand, we are able to prove the following generation results. 

\begin{lem}\label{polE6}
The products of an even number of symplectic polarities of the building $\Delta$ of type $\mathsf{E_6}$ over the field $\K$ generate the Chevalley group $\PGE_6(\K)$.  The symplectic polarities themselves generate $\PGE_6(\K).2$.
\end{lem}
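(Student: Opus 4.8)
The plan is to reduce both assertions to a single statement about the even part. A symplectic polarity is a type-reversing involution, so it induces the graph automorphism and lies in the nontrivial coset of $\PGE_6(\K)$ inside $\PGE_6(\K).2$. Fix one such polarity $\sigma_0$ and let $G$ be the group generated by all symplectic polarities. A product of an even number of symplectic polarities is type-preserving, so the even part $G^+:=G\cap\PGE_6(\K)$ is exactly the subgroup generated by products of two symplectic polarities, and $G=G^+\cup G^+\sigma_0$. Hence it suffices to prove $G^+=\PGE_6(\K)$: the first assertion is then literally this equality, and the second follows since $G=\PGE_6(\K)\cup\PGE_6(\K)\sigma_0=\PGE_6(\K).2$.

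First I would establish that the simple group $\mathsf{PSE}_6(\K)=\mathsf{E}_6(\K)$ is contained in $G^+$. Being a symplectic polarity is a geometric property (the polarities are characterised by their fixed/absolute structure), so the set of symplectic polarities is closed under conjugation by $\Aut(\Delta)$; consequently $G\trianglelefteq\Aut(\Delta)$ and therefore $G^+\trianglelefteq\Aut(\Delta)$. Now $\Aut(\Delta)$ is almost simple with socle $\mathsf{E}_6(\K)$ and trivial centraliser of the socle. A product of two distinct symplectic polarities is a nontrivial element of $G^+$, so $G^+\neq 1$, and a standard socle argument then forces $G^+\supseteq\mathsf{E}_6(\K)$. (Alternatively, as in the proof of \cref{PP=PGO}, one may instead exhibit the long root elations directly as products of two symplectic polarities sharing a suitable absolute geometry, but the normal-subgroup argument is shorter.)

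It then remains to show that $G^+$ surjects onto the quotient $\PGE_6(\K)/\mathsf{E}_6(\K)\cong\K^\times/(\K^\times)^3$, i.e.\ that $G^+$ realises every diagonal automorphism class. The relevant invariant is the similitude multiplier modulo cubes: under the isomorphism above, the coset of an element of $\PGE_6(\K)$ is its multiplier on the cubic form $\mathfrak{C}$ taken in $\K^\times/(\K^\times)^3$. I would work in the explicit $27$-dimensional model $(V,\mathfrak{C})$ described above (via the spread $\cS$ in $\GQ(2,4)$), where a symplectic polarity is realised as an involutory linear isomorphism $V\to V^{*}$ carrying $\mathfrak{C}$ to the dual cubic form up to a scalar $\mu(\sigma)\in\K^\times$; the product $\sigma\sigma_0$ then lies in $\GE_6(\K)$ with multiplier $\mu(\sigma)\mu(\sigma_0)$. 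Mimicking the matrix computation of \cref{PP=PGO}, I would fix a pencil $\{\sigma_t\mid t\in\K^\times\}$ of symplectic polarities all sharing the same absolute subgeometry, arranged so that $\mu(\sigma_t)\mu(\sigma_0)$ runs through a full set of representatives of $\K^\times/(\K^\times)^3$. The elements $\sigma_t\sigma_0\in G^+$ then cover all cosets, yielding $G^+=\PGE_6(\K)$ and completing the proof.

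The main obstacle is this last step: constructing a concrete pencil of symplectic polarities in the octonionic/$\GQ(2,4)$ model and carrying out the multiplier computation to verify that the cube-classes of the products exhaust $\K^\times/(\K^\times)^3$. This is the analogue of the explicit production of axial elations and of homologies of arbitrary multiplier in \cref{PP=PGO} and in Case~(A$^{**}$) of \cref{caseD}, but it is more delicate here because the governing invariant is a \emph{cube} class rather than a square class, reflecting the factor $\K^\times/(\K^\times)^3$ that measures the diagonal automorphisms of $\mathsf{E}_6$.
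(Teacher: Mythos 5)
Your reduction to the even part $G^+$ is correct, and your socle argument for $\mathsf{E}_6(\K)\leq G^+$ is a valid alternative to the paper's: the paper instead deduces nontriviality of $G^+\cap\mathsf{PSE}_6(\K)$ from the explicit elements it constructs and then invokes simplicity of $\mathsf{PSE}_6(\K)$ together with normality of $G$ in $\mathsf{E}_6(\K).2$, whereas you use triviality of the centraliser of the socle; both work. You have also identified the correct remaining invariant: the similitude multiplier of $\mathfrak{C}$ modulo cubes, which parametrises $\PGE_6(\K)/\mathsf{PSE}_6(\K)$.

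However, what you call ``the main obstacle'' is a genuine gap, not a deferred verification: it is the entire substance of the paper's proof, and without it you have only shown $\mathsf{E}_6(\K)\leq G^+\leq\PGE_6(\K)$. The paper closes it as follows. Take two opposite $5$-spaces $W,W'$ of $\cE_6(\K)$, spanned by the basis vectors indexed by the two $6$-sets of a double six in the $\GQ(2,4)$-model of $(V,\mathfrak{C})$. Three facts drive the construction: (a) a symplectic polarity of $\Delta$ induces a symplectic polarity in every $5$-space it fixes; (b) all symplectic polarities of $\Delta$ are conjugate and $\Aut(\Delta)$ is strongly transitive, so conversely \emph{every} symplectic polarity of $W$ extends to a symplectic polarity of $\Delta$ fixing both $W$ and $W'$ (and then all $5$-spaces of $\langle W,W'\rangle$); (c) each point $\langle e_{ij}\rangle$ is the unique point of $\cE_6(\K)$ with prescribed collinearities to the points $\langle e_\ell\rangle$ and $\langle e_{\ell'}\rangle$, so the product of two such extensions, which acts diagonally on $\langle W,W'\rangle$, is forced to be a diagonal automorphism of all of $\PG(V)$, uniquely determined by that restriction. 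Taking the polarities of $W$ attached to the symplectic forms $x_{-3}y_3+x_{-2}y_2+x_{-1}y_1-x_1y_{-1}-x_2y_{-2}-x_3y_{-3}$ and the same form with the terms $x_{-3}y_3$ and $x_3y_{-3}$ scaled by $\lambda$, the resulting product $\theta$ is a diagonal automorphism with multiplier $\lambda$ (determinant $\lambda^9$); since $\lambda\in\K^\times$ is arbitrary, these elements of $G^+$ exhaust $\K^\times/(\K^\times)^3$. So your ``pencil'' $\{\sigma_t\}$ does exist, but producing it requires precisely the extension argument (b) and the rigidity argument (c), neither of which is a routine matrix check; as it stands your proposal does not yet prove the lemma.
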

\begin{proof}
We first claim that diagonal automorphisms $\varphi$ necessarily have a ninth power as determinant.  Indeed, $\varphi$ acts as \[\varphi\colon V\to V\colon  \sum_{p\in\cP}x_pe_p\mapsto \sum_{p\in\cP}\lambda_px_pe_p,\] with $\lambda_p\in\K$. Then $\varphi$ is a similitude of $\mathfrak{C}$ only if the product $\lambda_p\lambda_q\lambda_r=:\lambda$ is a constant across all lines $\{p,q,r\}\in\cL$. Then the determinant of $\varphi$ is obtained by multiplying this constant over the spread $\cS$ and hence the determinant equals $\lambda^9$. The claim is proved. 

Now a symplectic polarity $\sigma$ of $\Delta$ induces a symplectic polarity in every fixed $5$-space of the corresponding Lie incidence geometry $\Gamma$ of type $\mathsf{E_6}$. Since all symplectic polarities are conjugate, every symplectic polarity of a given $5$-space extends to a symplectic polarity of $\Delta$. By the strong transitivity of $\Aut\Delta$, we may even assume that two given opposite $5$-spaces are fixed (and then, since the fix building has type $\mathsf{F_4}$ and its polar type corresponds to the fixed $5$-spaces (as is apparent from \cite{Sch-Sas-Mal:18}, all $5$-spaces in the span of the two given ones in $\PG(V)$ are fixed). Now, two opposite $5$-spaces $W$ and $W'$ are given by the span of the base points corresponding to the two respective $6$-sets of a double six. It is easily seen that the product of the symplectic polarities corresponding to the symplectic forms \[x_{-3}y_3+x_{-2}y_2+x_{-1}y_1-x_1y_{-1}x_2y_{-2}-x_3y_{-3}\mbox{ and }\lambda x_{-3}y_3+x_{-2}y_2+x_{-1}y_1-x_1y_{-1}x_2y_{-2}-\lambda x_3y_{-3},\] $\lambda\in\K$, corresponds to the diagonal collineation of $\PG(5,\K)$ with diagonal $(\lambda,1,1,1,1,\lambda)$. Letting these coordinates correspond naturally to the bases $(e_1,\ldots,e_6)$ and $(e_{1'},\ldots,e_{6'})$ of the subspaces of $V$ corresponding to $W$ and $W'$, respectively, we first derive that the product $\theta$ of the corresponding symplectic polarities of $\Delta$ acts on $\<W,W'\>$ as \[(x_1,x_2,\ldots,x_6,x_{1'},x_{2'},\ldots,x_{6'})\mapsto (\lambda x_1,x_2,x_3,x_4,x_5,\lambda x_6,\lambda x_{1'},x_{2'},x_{3'},x_{4'},x_{5'},\lambda x_{6'}).\] Secondly, since each point $\<e_{ij}\>$ is the unique point of $\Gamma$ collinear to all $\<e_\ell\>$, except for $\<e_i\>$ and $\<e_j\>$, and to all  $\<e_{\ell'}\>$, except for $\<e_{i'}\>$ and $\<e_{j'}\>$ (as follows from Lemma~3.5 in \cite{ADS-HVM}), we see that $\theta$ is a diagonal automorphism. Now one easily calculates that $\theta$ is uniquely determined by its restriction to $\<W,W'\>$ and maps $e_{ij}$ to $e_{ij}$ if $|\{i,j\}\cap\{1,6\}|=1$, to $\lambda e_{ij}$ if $\{i,j\}\cap\{1,6\}=\varnothing$, and to $\lambda^{-1}e_{ij}$ if $\{i,j\}=\{1,6\}$. Correspondingly, the determinant of $\theta$ is $\lambda^9$.  Now clearly the diagonal automorphisms generate non-trivial elements of $\mathsf{PSE_6}(\K)$. Since the latter is simple, and since the subgroup of $\mathsf{E_6}(\K).2$ generated by all symplectic polarities of $\Delta$ is normal, the group generated by arbitrary products of an even number of symplectic polarities contains $\mathsf{PSE_6}(\K)$.  Since it also contains all diagonal automorphisms by the above, the assertions follow. 
\end{proof}

 \begin{lem}\label{E6T1}
With the notation of \emph{\cref{algapp}}, we have $\mathsf{E_6}(\K).T_1=\mathsf{PGE_6}(\K)$.
\end{lem}

\begin{proof}
A chamber $C$ of a building of type $\mathsf{E_6}$ over $\K$ is given by a $6$-tuple of pairwise incident elements of the Lie incidence geometry of type $\mathsf{E_{6,1}}$ over $\K$ described above consisting of a point, a  line, a plane,  a $5$-space, a $4$-space intersecting the $5$-space in a $3$-space, and a hyperplane of the $5$-space.  In the above description, we can take, with obvious notation, \[C=(\<e_1\>,\<e_1,e_2\>,\<e_1,e_2,e_3\>,\<e_1,e_2,\ldots,e_6\>,\<e,1,\ldots,e_4,e_{\{5,6\}}\>,\<e_1,\ldots,e_5\>).\]
Denote $W=\<e_1,\ldots,e_6,e_{\{5,6\}}\>$. An arbitrary diagonal automorphism of $W$ inducing an element of $T_1$, that is, acting trivially on the rank 1 residues defined by $C$, except for the type $1$ rank 1 residue, is given by \[ \begin{cases}e_1\mapsto \lambda e_1, &\\ e_i\mapsto e_i,& i=2,3,4,5,6,\\ e_{\{5,6\}}\mapsto e_{\{5,6\}}.\end{cases} \] It is easy to check that this can be extended to a unique diagonal automorphism preserving the cubic form $\mathfrak{C}$ by defining
 \[\begin{cases}
 e_{1'}\mapsto e_{1'},&\\
 e_{i'}\mapsto \lambda^{-1}e_{i'},& i=2,3,4,5,6,\\
 e_{\{i,j\}}\mapsto e_{\{i,j\}}, & 1\notin\{i,j\}\subseteq\{1,2\ldots,6\}, i\neq j,\\
 e_{\{1,j\}}\mapsto \lambda^{-1}e_{\{1,j\}}, & j=2,3,4,5,6.
 \end{cases}\]
Now one calculates that the determinant of the corresponding diagonal matrix is $\lambda^{-9}$, and the result follows similarly as in the proof of \cref{polE6} above.
\end{proof}

\cref{triangles} and \cref{E77points} can be used to determine $\Pi^+(F)$ and $\Pi(F)$ for simplices of type $7$ in buildings of type $\mathsf{E_7}$. However, in general, the number of possibilities for triangles of mutually opposite simplices is too large to be practical or useful. The following result provides an alternative to \cref{triangles}. The condition that $J$ is a self-opposite type is not essential, but convenient, and we will only need it in that case.  

\begin{nota}
For a spherical building $\Delta$ of type $\mathsf{X}_n$ over $I$ and a type set $J\subseteq I$, we denote by $\Gamma_J$ the graph with vertices the simplices of type $J$, adjacent when contained in adjacent chambers. Adjacent vertices $F,F'$ in $\Gamma_J$ are denoted $F\sim F'$. An alternative definition of $\Gamma_J$ is that it is the point graph of the corresponding Lie incidence geometry of type $\mathsf{X}_{n,J}$.
\end{nota}

 \begin{lem}\label{upanddown}
 Let $\Delta$ be a spherical building over the type set $I$ and let $J\subseteq I$ be a self-opposite type. Suppose that for each pair of simplices $F,F'$ of type $J$, the subgraph $\Gamma_{J}^{\{F,F'\}}$ of $\Gamma_J$ induced on the vertices opposite both $F$ and $F'$ is connected. Suppose also that there is a simplex of type $J$ opposite any given set of three simplices of type $J$. Let $F$ be a given simplex of type $J$. Denote by $\Pi_4(F)$ the set of all self-projectivities $F\per F_2\per F_3\per F_4\per F$ of $F$ of length $4$ with $F\sim F_3$, $F_2\sim F_4$. Suppose that $\Pi_4(F)$ is geometric. Then $\Pi^+(F)=\<\Pi_4(F)\>$. 
 \end{lem}
\begin{proof}
We first prove the following property for four simplices $F_1,F_2,F_3,F_4$, where $\typ(F_1)=\typ(F_3)=J$ and both $F_2$ and $F_4$ are opposite both $F_1$ and $F_3$.
\begin{itemize}
\item[(*)] \emph{The projectivity $\rho\colon F_1\per F_2\per F_3\per F_4$ can be written as a product of a perspectivity $F_{1} \per F_{4}$ and conjugates of members of $\Pi_4(F_1)$.}
\end{itemize}
Indeed, let $F_1= F_1'\sim F_2'\sim\cdots F_n'=F_3$ be a  path in $\Gamma_{J}^{\{F_{2},F_{4}\}}$.  Define $\rho_i\colon F_4\per F_i'\per F_2\per F_{i+1}'\per F_4$, $i \in\{1,2,\ldots, n-1\}$. Denote by $\rho_0$ the perspectivity $F_1\per F_4$. Then it is elementary to see that $\rho=\rho_0\rho_1\rho_2\cdots\rho_{n-1}$. So, since $\Pi_4(F)$ is geometric, it suffices to show that each $\rho_i$ can be written as the product of conjugates of members of $\Pi_4(F_1)$. It follows from letting $(F_4,F_i',F_2,F_{i+1}')$ play the role of $(F_1,F_2,F_3,F_4)$ in the previous argument that $\rho_{i}$ is a product of conjugates of members of $\Pi_4(F_{i+1}')$. Hence (*) is proved.

Now let $\rho\colon F\per F_2\per F_3\per \cdots\per F_{2\ell-1}\per F_{2\ell}\per F$ be an arbitrary even projectivity. We prove by induction on $\ell\in\{1,2,\ldots\}$ that $\rho$ is the product of conjugates of members of $\Pi_4(F)$. This is trivial for $\ell=1$ and it is equivalent to (*) for $\ell=2$. So let $\ell\geq 3$. Select a simplex $F_2'$ opposite each of $F, F_3$ and $F_5$ (since these all have the same type, this is still possible if $J$ is not self-opposite).  Setting \vspace{-.5cm} \[\begin{cases}\rho^*_1\colon F\per F_2\per F_3\per F_2'\per F,\\ \rho^*_2\colon F_2'\per F_3\per F_4\per F_5\per F_2',\\ \rho'\colon F\per F_2'\per F_5\per F_6\cdots\per F_{2\ell}\per F,\end{cases}\] we see that, if $\rho_0\colon F\per F_2'$, we have $\rho=\rho_1^*\cdot(\rho_0\rho_2^*\rho_0^{-1})\cdot\rho'$, where we know by the induction hypothesis that all factors are products of members of $\Pi_4(F)$, using the fact that $\Pi_4(F)$ is geometric (and hence closed under conjugation).  
\end{proof}

This method of determining $\Pi^+(F)$ ``explains'' these groups in a way  equivalent to the algebraic approach of \cref{algapp}.  

In order to be able to apply \cref{upanddown}, we have to check the conditions in the various cases. It turns out we will use \cref{upanddown} in exactly three different cases, for which we now note down the condition on the corresponding graph $\Gamma_J$.

\begin{lem}\label{conn}
Let $\Delta$ be the spherical building over the field $\K$, $|\K|>2$, of type either $\mathsf{E_6}$ or $\mathsf{E_7}$. Let $J=\{2\}$ if $\typ\Delta=\mathsf{E_6}$, and $J\in\{\{1\},\{3\}\}$ if $\typ\Delta=\mathsf{E_7}$. Let $v$ and $v'$ be vertices of type $J$. Then the subgraph $\Gamma_J^{v,v'}$ of $\Gamma_J$ induced on the vertices opposite both $v$ and $v'$ is connected. 
\end{lem}

\begin{proof}
Select chambers $C$ and $C'$ of $\Delta$ containing $v$ and $v'$, respectively. Let $u$ be a vertex of $\Gamma_J^{v,v'}$. In $\Res_\Delta(u)$, we can find a chamber opposite the projections of $C$ and $C'$ onto $\Res_\Delta(u)$ (use \cite[Proposition~3.30]{Tits:74}). That chamber is, by \cref{Tits}, opposite both $C$ and $C'$. Hence we have found a chamber $C_u$ containing $u$ opposite both $C$ and $C'$. If $u'$ is another vertex of $\Gamma_J^{v,v'}$, then we can also find a chamber $C_{u'}$ containing $u'$ and opposite both $C$ and $C'$. Now \cref{connopp} implies that we can find a sequence of consecutively adjacent chambers, all opposite both $C$ and $C'$, connecting $C_u$ with $C_{u'}$. the vertices of type $J$ of two consecutive such chambers are either equal, or adjacent in $\Gamma_J$. Moreover, since the chambers are opposite both $C$ and $C'$, their vertices of type $J$ are opposite both $v$ and $v'$, as $J$ is a self-opposite type. 

 The lemma is proved.
 \end{proof}

\begin{remark}It is clear that the previous lemma holds for all spherical buildings of simply laced type and self-opposite subset $J$ of the types, with the same proof. If $J$ is not self-opposite, then one has to consider the subgraph induced on the set of simplices of type $J$ opposite two given simplices of opposite type of $J$. 
\end{remark}

Before we can determine in a geometric way the various projectivity groups in the exceptional buildings of simply laced type, we need some basic properties of Lie incidence geometries of types $\mathsf{E_{6,1}}$ and $\mathsf{E_{7,7}}$. Most of them can be read off the diagram, and others follow from considering an apartment of the building. They are called ``facts'' in papers like \cite{ADS-JS-HVM,ADS-HVM}. For Lie incidence geometries of type $\mathsf{E_{6,1}}$, good references are \cite{Sch-Sas-Mal:18} and \cite{Tits:57}, and for Lie incidence geometries of type $\mathsf{E_{7,7}}$ a good reference is \cite{Sch-Sas-Mal:22}. In both papers, the basic facts are explained in some more detail. 

\subsection*{ Lie incidence geometries of type $\mathsf{E_{6,1}}$} These geometries have diameter $2$ and contain no special pairs of points. Hence, every pair of points is contained in a symp. Symps are polar spaces of type $\mathsf{D_5}$. The basic properties, which we shall use without notice, are summarised in the following lemma.
 \label{propE6E7}
\begin{lem}
Let $\Gamma$ be a Lie incidence geometry of type $\mathsf{E_{6,1}}$ over a field $\K$. Then the following properties hold. 
\begin{compactenum}[$(i)$]
\item Two distinct symps either meet in a unique point, or share a maximal singular subspace, referred to as a $4$-space. 
\item For a point $p$ and a symp $\xi$, with $p\notin\xi$, we either have $p^\perp\cap\xi=\varnothing$, or $p^\perp\cap\xi$ is a maximal singular subspace of $\xi$, referred to as a $4'$-space. 
\item The $4$-spaces in a given symp form one natural class of maximal singular subspaces of $\xi$; the $4'$-spaces form the other. 
\end{compactenum}
\end{lem}
In the building, $4$-spaces correspond to vertices of type $5$, whereas $4'$-spaces correspond to simplices of type $\{2,6\}$. 

We now mention some other facts. The first one can be read off the diagram. It is also contained as Fact~4.14 in \cite{ADS-JS-HVM}.

\begin{lem}\label{44'5}
Let $\Gamma$ be a Lie incidence geometry of type $\mathsf{E_{6,1}}$ over a field $\K$. Then the following hold. 
\begin{compactenum}[$(i)$] 
\item A $4$-space and a $4'$-space, that have a plane $\pi$ in common, intersect in a $3$-space. Consequently, a $4$-space and a $5$-space, that share a plane, share a $3$-space. 
\item Two distinct non-disjoint $5$-spaces intersect in either a point or a plane. Consequently, a $4'$-space, that shares a $3$-space with a $5$-space, is contained in it.
\item Two disjoint $5$-spaces, that are not opposite, contain respective planes contained in a common $5$-space. Every point of each of the two $5$-spaces is collinear to some point of the plane contained in the other $5$-space.  
\end{compactenum}
 \end{lem}
 
\begin{lem}\label{plane-symp}
Let $\Gamma$ be a Lie incidence geometry of type $\mathsf{E_{6,1}}$ over a field $\K$. Let $\xi$ be a symp in $\Gamma$ and let $\pi$ be a plane in $\Gamma$ intersecting $\xi$ in a unique point $x$. Then there exists a unique plane $\alpha\subseteq\xi$, all points of which are collinear to all points of $\pi$.  
\end{lem}
\begin{proof}
Let $L$ be a line in $\pi$ not intersecting $\xi$. The lemma follows from Fact A.9 of \cite{ADS-HVM}.
\end{proof}

\subsection*{ Lie incidence geometries of type $\mathsf{E_{7,7}}$} These geometries have diameter $3$ and contain no special pairs of points. Points at distance $3$ correspond to opposite vertices of type $7$ in the corresponding building. Hence every pair of non-opposite points is contained in a symp. Symps are polar spaces of type $\mathsf{D_6}$. The basic properties, which we shall use without notice, are summarised in the following lemma.

\begin{lem}\label{point-sympE7}
Let $\Gamma$ be a Lie incidence geometry of type $\mathsf{E_{7,7}}$ over a field $\K$. Let $x$ be a point and $\xi$ a symp. Then either
\begin{compactenum}[$(i)$]
\item $x\in\xi$, or
\item $x\notin\xi$, $x$ is collinear to each point of a unique $5'$-space of $\xi$ and symplectic to all other points of $\xi$, or
\item $x\notin\xi$, $x$ is collinear to a unique point $x'$ of $\xi$, symplectic to all points of $\xi$ collinear to $x'$, and opposite each other point of $\xi$.
\end{compactenum}
\end{lem}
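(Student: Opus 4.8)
The plan is to translate everything into the building $\Delta$ of type $\mathsf{E}_7$ and classify the relative positions of the two simplices involved. Under the Bourbaki labelling, points of $\Gamma$ are the vertices of type $7$, while a $\mathsf{D}_6$-symp $\xi$ arises from a vertex $z$ of type $1$: deleting node $1$ from the $\mathsf{E}_7$ diagram leaves a $\mathsf{D}_6$ diagram in which node $7$ is the end of the long arm, so the points of $\xi$ are exactly the type-$7$ vertices joinable to $z$, and they form a $\mathsf{D}_6$ polar space. Thus the statement concerns the joint configuration of a type-$7$ vertex $x$ and a type-$1$ vertex $z$. By the building axioms $x$ and $z$ lie in a common apartment $\Sigma$, and their relative position (the double coset in $W_{I\setminus\{1\}}\backslash W/W_{I\setminus\{7\}}$ recording the $W$-distance) is the same in every common apartment and determines the whole incidence pattern between $x$ and the points of $\xi$. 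So the first and central step is to show that there are \emph{exactly} three such relative positions. This is a finite computation in the Weyl group of type $\mathsf{E}_7$ (node $7$ being cominuscule, so the point set is a minuscule variety with few relative positions), and I expect exactly three minimal-length double-coset representatives, which I would exhibit directly inside $\Sigma$.

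Having fixed three representatives, the second step is to read off the geometry of each. I would first record the general parapolar fact that, for $x\notin\xi$, the set $x^\perp\cap\xi$ is a (possibly empty) singular subspace of the polar space $\xi$; the three relative positions then realise it as, respectively, all of $\xi$ (case $(i)$, $x\in\xi$), a maximal singular subspace of one of the two families of $\xi$ (the $5'$-spaces, case $(ii)$), and a single point $x'$ (case $(iii)$). The orbit count guarantees that no intermediate dimension, and in particular no empty intersection, can occur; this exhaustiveness is exactly what the three-orbit statement buys us.

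For the finer description within cases $(ii)$ and $(iii)$ I would use that $\Gamma$ has diameter $3$ and no special pairs, so two points are collinear, symplectic, or opposite according as their point-graph distance is $1$, $2$, or $3$. In case $(ii)$, any point $y\in\xi$ outside $S:=x^\perp\cap\xi$ is collinear inside $\xi$ to a hyperplane of the maximal singular subspace $S$ (a standard polar-space fact), so $x\perp s\perp y$ for some $s\in S$ and hence $x\pperp y$; thus $x$ is symplectic to every point of $\xi\setminus S$. In case $(iii)$, each $y\in x'^\perp\cap\xi$ satisfies $x\perp x'\perp y$, whence $x\pperp y$; for the remaining $y$ I would compute $\dist(x,y)$ directly in $\Sigma$ and identify it as $3$, equivalently verify opposition via \cref{Tits} applied to opposite simplices whose residues contain $x$ and $y$. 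The main obstacle I anticipate is the combination of exhaustiveness and this opposition determination: one must rule out both an empty $x^\perp\cap\xi$ and any singular subspace of intermediate dimension, and in case $(iii)$ show that the split into symplectic points (those in $x'^\perp$) and opposite points (all others) is complete rather than merely a lower bound on the distance. Both follow from the three-orbit computation once representatives are fixed, but it is the passage from ``distance at least $2$'' to genuine opposition that is delicate and where \cref{Tits} (or an explicit apartment count) does the essential work.
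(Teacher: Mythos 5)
The paper never actually proves this lemma: it is stated among the ``basic properties'' of $\mathsf{E_{7,7}}$-geometries, justified only by the earlier remark that such facts can be read off the diagram or ``follow from considering an apartment of the building'', with \cite{Sch-Sas-Mal:22} as the reference. Measured against that implicit method, your outline is the right one, and your central count is correct: restricting the $56$ weights of the minuscule $\mathsf{E_7}$-module to the Levi of type $\mathsf{D_6}\times\mathsf{A_1}$ at node $1$ gives $W(\mathsf{D_6})$-orbits of sizes $12+12+32$, so there are exactly three relative positions of a pair $(x,z)$, $z$ the type-$1$ vertex defining $\xi$, matching cases $(i)$, $(iii)$, $(ii)$ respectively. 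Your observations that $x^\perp\cap\xi$ is a singular subspace (convexity plus the gamma-space axiom), that in case $(ii)$ it equals a maximal singular subspace and all remaining points of $\xi$ are symplectic to $x$, and that in case $(iii)$ the points of $x'^\perp\cap\xi$ are symplectic to $x$ (no special pairs), are all correct.

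The genuine gap is exactly the one you flag, and neither of your escape routes works as written. In case $(iii)$ you must show that \emph{every} $y\in\xi\setminus x'^\perp$ is opposite $x$; but a fixed apartment $\Sigma$ containing $x$ and $z$ contains only $12$ points of $\xi$, so one cannot ``compute $\dist(x,y)$ directly in $\Sigma$'' for the remaining points, and three arbitrary simplices of a building need not lie in a common apartment, so the difficulty is real. The observation that closes it: since $y\in\xi$ means precisely that $\{z,y\}$ is a simplex of $\Delta$, the building axioms provide an apartment $\Sigma'$ containing the two simplices $\{x\}$ and $\{z,y\}$, hence containing $x$, $z$ and $y$ simultaneously, and the double-coset pattern applies inside $\Sigma'$. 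To exploit it you first need that $x^\perp\cap\xi$ is exactly $\{x'\}$: if it contained a line $L$, the polar-space axiom would give a point of $L$ collinear to the point $y_0\in\xi\cap\Sigma$ opposite $x$, contradicting $\dist(x,y_0)=3$. Consequently the unique point of $\xi\cap\Sigma'$ collinear to $x$ must be $x'$ itself, so $x'\in\Sigma'$ automatically, and your $y$, being non-collinear to $x'$, is forced to be the unique point of $\xi\cap\Sigma'$ that the pattern declares opposite $x$. This same device is what substantiates your blanket claim that the relative position of $(x,z)$ ``determines the whole incidence pattern'' with all of $\xi$ (for case $(ii)$ your subspace-generation argument also suffices); without it, or an explicit transitivity statement for the stabiliser of the pair $(x,z)$ on $\xi\setminus x'^\perp$, the passage from ``distance at least $2$'' to genuine opposition remains unproven.
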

In Case $(ii)$ above, the point $x$ is said \emph{to be close to $\xi$}, whereas in Case $(iii)$ it is said \emph{to be far from $\xi$}. 

Two distinct symps sharing at least a plane, share a $5$-space. Again, the $5$-spaces in a given symp form one natural class of maximal singular subspaces, whereas the $5'$-spaces form the other class. 

\begin{lem}\label{symppencilE7}
Let $\Gamma$ be a Lie incidence geometry of type $\mathsf{E_{7,7}}$ over a field $\K$. Let $M$ be a maximal $5$-space and let $\xi$ and $\xi'$ be two distinct symps containing $M$. Let $p\in \xi\setminus M$ and $p'\in p^\perp\cap (\xi'\setminus M)$. Then every point on the line $\<p,p'\>$ is contained in a (unique) symp, which contains $M$.  
\end{lem}

\begin{proof}
Since $p'\perp p$, we have $p^\perp\cap M=p'^\perp\cap M=:U$ is a $4$-space. Then, every point $q\in pp'$ is contained in the $5'$-space generated by $q$ and $U$, which is itself contained in a unique symp by definition of $5'$-space. 
\end{proof}

We will also need the following two results, which follow from considering an appropriate apartment.

\begin{lem}\label{sympline}
Let $\Gamma$ be a Lie incidence geometry of type $\mathsf{E_{7,7}}$ over a field $\K$. Let $\xi$ and $\xi'$ be two symps. If $\xi\cap\xi'=L$, with $L$ a line, then a point $x\in\xi$ is opposite some point $x'\in\xi'$ if, and only if, $x^\perp\cap L\cap x'^\perp=\varnothing$. In particular, $\xi\cup\xi'$ does not contain any pair of opposite singular $t$-spaces for $t\geq 2$.  
\end{lem}

\begin{lem}\label{3-6space}
Let $\Gamma$ be a Lie incidence geometry of type $\mathsf{E_{7,7}}$ over a field $\K$. Let $U$ and $U'$ be two opposite $3$-spaces and let $W\supseteq U$ and $W'\supseteq U'$ be two $6$-spaces, which are not opposite. Then there exists a plane $\alpha$ in $W$ disjoint from $U$, no point of which is opposite any point of $W'$.  
\end{lem}

We are now ready to determine the various projectivity groups in the exceptional simply laced cases in a geometric way. 

\begin{theorem}\label{caseE}
Let $\Delta$ be a building of type $\mathsf{E_6}$, $\mathsf{E_7}$ or $\mathsf{E_8}$ over the field $\K$. Let $F$ be a simplex of $\Delta$ such that $\Res_\Delta(F)$ is irreducible. Then $\Pi(F)$ and $\Pi^+(F)$ are given as in \emph{\cref{Etabel}}, where the last column contains a checkmark if $\typ(F)$ is polar closed.  Again,  a long hyphen in the table in the column of $\Pi(F)$ means that $\typ(F)$ is not self-opposite and so $\Pi(F)$ is trivially isomorphic to $\Pi^+(F)$ --- it must again be read as a ``bysame'' symbol. Grey rows correspond to projectivity groups which are not necessarily full linear groups.  
\end{theorem}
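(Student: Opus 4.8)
The plan is to run through all cotypes $\cotyp(F)=I\setminus J$ for which $\Res_\Delta(F)$ is irreducible — these are exactly the proper connected subdiagrams of the relevant $\mathsf{E}$-diagram, of types $\mathsf{A}_\ell$, $\mathsf{D}_\ell$, $\mathsf{E}_6$ or $\mathsf{E}_7$ — and to pin down $\Pi^+(F)$ and $\Pi(F)$ row by row. In every case a uniform lower bound is available from \cref{simple}: since $\Aut^+(\Delta)_F$ induces on $\Res_\Delta(F)$ at least the little projective group of the residue (the root groups of $\Delta$ whose centre lies inside the residue restrict to root elations of $\Res_\Delta(F)$, and generate its little projective group), $\Pi^+(F)$ contains the simple group of type $\cotyp(F)$. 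Hence the entire problem reduces to identifying the ``diagonal part'' lying between this simple group and the full linear automorphism group of the residue, and to deciding, via \cref{Tits}, whether a single perspectivity is type-preserving on the residue.

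For the generic rows I would reduce to already-settled buildings. Rank-one residues are disposed of by \cref{MR3}, giving $\PGL_2(\K)$. For $R$ of rank at least $2$, suppose there is a node $k\in J$ adjacent to $R$ such that $R\cup\{k\}$ is connected of type $\mathsf{A}_{\ell+1}$ (when $R=\mathsf{A}_\ell$), $\mathsf{D}_{m+1}$ (when $R=\mathsf{D}_m$), or a smaller $\mathsf{E}$-diagram; put $K=J\setminus\{k\}$ and let $F_K\subseteq F$ be the corresponding face. In $\Res_\Delta(F_K)$, of type $R\cup\{k\}$, the residual simplex $F\setminus F_K$ is a single vertex whose residue is $R$, and its special projectivity group there is the full linear group of $R$ by \cref{caseA}, \cref{caseD}, or induction on the rank within the $\mathsf{E}$-family. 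Then \cref{corgate} transfers ``full linear'' to $\Pi^+(F)$. This settles every residue that can be enlarged by one node to a larger irreducible residue of linear type, leaving only finitely many maximal residues.

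The hard cases are exactly these maximal residues, which are the three exceptional configurations from the introduction: $R=\mathsf{A}_5$ in $\mathsf{E_6}$ (the type-$2$ vertex), $R=\mathsf{A}_5$ containing node $2$ in $\mathsf{E_7}$, and $R=\mathsf{D}_6$ in $\mathsf{E_7}$ (the type-$1$ vertex). For the middle one, \cref{gate2} reduces the lower bound to the maximal-generator Case~(A$^*$) of \cref{caseD} inside a $\mathsf{D}_6$-residue, but the sharp upper (determinant-square) bound still has to be read off the ambient $\mathsf{E_7}$ model. For all three I would compute $\Pi^+(F)$ directly by describing the short self-projectivities as geometric involutions: length-three self-projectivities are symplectic or parabolic polarities (as in Cases~(A$^*$) and (D$'$) of \cref{caseD}), so \cref{triangles} applies once enough mutually opposite simplices are produced — by \cref{3.30} when $|\K|\geq 4$ and by \cref{hyppoints} and \cref{E77points} when $\K=\F_3$ — while for the type-$2$ vertex in $\mathsf{E_6}$ and the type-$1$ vertex in $\mathsf{E_7}$ I would instead use \cref{upanddown}, whose connectivity hypothesis is supplied by \cref{conn}. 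The generated groups are then identified by the generation lemmas: \cref{polE6} yields $\PGE_6(\K)$, resp.\ $\PGE_6(\K).2$, from even, resp.\ arbitrary, products of symplectic polarities; \cref{homE7} yields $\PGE_7(\K)$ from homologies; and \cref{PP=PGO}, \cref{PGO} control the orthogonal cases. The exact proper subgroups in the exceptional rows then come from the determinant computations inside those lemmas — a cube for $\mathsf{E_6}$--$\mathsf{A}_5$, a square for $\mathsf{E_7}$--$\mathsf{A}_5$, and the $\mathsf{P\Omega}_{12}(\K)$-plus-diagonals group for $\mathsf{E_7}$--$\mathsf{D}_6$ — using the cubic form $\mathfrak{C}$ and the $56$-dimensional model of $\cE_7(\K)$.

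Finally I would settle the general-versus-special distinction and the polar-closed column. By \cref{Tits}, a single perspectivity sends a residue-type $i$ to $(i^\equiv)^{\equiv_F}$; when this permutation of $\cotyp(F)$ is trivial (in particular whenever $J\neq J^\equiv$) perspectivities preserve type and $\Pi(F)=\Pi^+(F)$, whereas a nontrivial diagram automorphism forces $\Pi(F)$ to contain a genuine duality strictly larger than $\Pi^+(F)$ — this produces the $.2$ extensions and occurs exactly when ambient opposition is trivial but the opposition of $R$ is not. The checkmark column then records $n(J)$, consistent with \cref{MR1} and \cref{MR2}. For $\mathsf{E_8}$, where the group is simultaneously simple and adjoint and there is no diagonal obstruction, the $\mathsf{E_8}$-specific incidence lemmas (\cref{nonoppsymps}, \cref{4-7spaces}, \cref{symps-line}, \cref{E8point-symp}) are used only to verify the opposition and connectivity hypotheses, so every $\mathsf{E_8}$ row comes out as the full linear group. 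I expect the main obstacle to be the exceptional rows: proving that $\Pi^+(F)$ is a \emph{proper} subgroup of the full linear group (the determinant restriction) rather than merely containing the simple group, which needs the explicit coordinate models and delicate determinant bookkeeping, together with verifying the opposition and connectivity inputs over the small field $\F_3$.
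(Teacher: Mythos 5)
Your overall architecture (lower bound from \cref{simple}, reduction via \cref{corgate}, direct constructions plus generation lemmas for the remaining cases, \cref{Tits} for the $\Pi$ versus $\Pi^+$ distinction) is the same as the paper's, but there is a genuine gap in your identification of which cases need direct work, and one of your reduction steps would fail. You claim that the only residues not settled by the one-node-enlargement trick are the three exceptional configurations. That is false: the maximal irreducible residues also include $\mathsf{D_5}$ in $\mathsf{E_6}$, $\mathsf{A_6}$ (type $\{1,3,4,5,6,7\}$) and $\mathsf{E_6}$ in $\mathsf{E_7}$, and $\mathsf{A_7}$, $\mathsf{D_7}$ and $\mathsf{E_7}$ in $\mathsf{E_8}$. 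For all of these, \cref{simple} yields only the simple group, while \cref{Etabel} asserts the \emph{full} linear group ($\PGL_8(\K)$, $\overline{\mathsf{PGO}}^\circ_{14}(\K)$, $\mathsf{PGE_6}(\K)$, $\PGE_7(\K)$, \ldots); the diagonal part in between is not automatic and must be constructed, which is exactly what the bulk of the paper's proof does (explicit homologies arising as length-four projectivities for cases (A6), (A7), (D5), (D7); symplectic polarities plus \cref{polE6} for (E6); homologies plus \cref{homE7} for (E7)). Your proposal has no mechanism for these rows. Relatedly, your reduction for $\mathsf{D}_m$ residues --- embed into $\mathsf{D}_{m+1}$ and cite \cref{caseD} --- fails even where it applies: by rows (D) and (D$'$) of \cref{Drtabel}, projectivity groups inside a $\mathsf{D}$-type building are $\PGO^\circ$-groups \emph{without} similitudes, whereas the correct entries in \cref{Etabel} are the barred groups $\overline{\mathsf{PGO}}^\circ_{2m}(\K)$ (or $\overline{\mathsf{P\Omega}}_{12}(\K)$) \emph{with} similitudes. \cref{corgate} applied through a $\mathsf{D}_{m+1}$-residue can never produce those similitudes; in the paper they come from homologies built in the ambient exceptional geometry (case (D5) in $\mathsf{E_6}$, case (D7) in $\mathsf{E_8}$), with the whole $\mathsf{D}$-family then resting on those base cases, and with $\mathsf{D_6}$ in $\mathsf{E_8}$ requiring the combination of the $\mathsf{E_7}$- and $\mathsf{D_7}$-containments via \cref{PGO}$(iii)$.

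Two further points. Your assertion that for $\mathsf{E_8}$ ``there is no diagonal obstruction'' because the group is simultaneously simple and adjoint is not a proof --- it is precisely the statement in question, and the fact that it fails for $\mathsf{E_6}$ and $\mathsf{E_7}$ (rows (A5) and (D6)) shows it is not formal; the paper establishes it for $\mathsf{E_8}$ by the same geometric constructions as elsewhere. (One could instead argue lattice-theoretically --- for $\mathsf{E_8}$ the weight and root lattices coincide and the sublattice spanned by any subset of simple roots is saturated, so the torus of $\Aut^+(\Delta)$ surjects onto the adjoint torus of every residue --- but you would have to supply that argument; it is not in the paper.) Finally, your description of the generators in the exceptional cases is partly wrong: for the type-$2$ vertex in $\mathsf{E_6}$ and the type-$1$ vertex in $\mathsf{E_7}$ the types $\{2\}$, respectively $\{1\}$, are polar closed, so by \cref{Tits} perspectivities are type-preserving and length-three self-projectivities are collineations, \emph{not} ``symplectic or parabolic polarities''; the paper's generators there are homologies obtained from length-four projectivities, which is precisely why \cref{upanddown} and \cref{conn} are needed instead of \cref{triangles}. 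Your alternative for the middle case ($\mathsf{A_5}$ containing node $2$ in $\mathsf{E_7}$) --- lower bound $\PSL_6(\K,2)$ from \cref{gate2} and case (A$^*$) of \cref{caseD} --- is sound as a lower bound, but the matching upper bound cannot simply be ``read off the ambient $\mathsf{E_7}$ model'': by \cref{simple} it requires showing that every projectivity generator has square determinant, i.e., the same kind of explicit geometric analysis (the paper does this by reducing to projections between $5$-spaces inside symps and re-running the argument of case (A$^{**}$) of \cref{caseD}).
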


\begin{table}[tbh]
\renewcommand{\arraystretch}{1.3}
\begin{tabular}{|c|c|c|c|c|c|c|}\hline
Reference&$\typ(\Delta)$& $\Res_\Delta(F)$ &  $\cotyp(F)$ & $\Pi^+(F)$&$\Pi(F)$& \\ \hline
(A1) & & $\mathsf{A_1}$ & & $\PGL_2(\K)$ &$\PGL_2(\K)$&\\ \hline
 \multirow{3}{*}{(A2)} &$\mathsf{E_6}$ & $\mathsf{A_2}$& $\{2,4\}$  & $\PGL_3(\K)$ & $\PGL_3(\K).2$&\\ 
 & $\mathsf{E_6}$ & $\mathsf{A_2}$ & $\neq\{2,4\}$ & $\PGL_3(\K)$ &  ------&\\ 
 & $\mathsf{E_7,E_8}$ & $\mathsf{A_2}$ & & $\PGL_3(\K)$ & $\PGL_3(\K).2$ &\\ \hline
\multirow{3}{*}{(A3)} & $\mathsf{E_6}$ & $\mathsf{A_3}$ & $\{3,4,5\}$ & $\PGL_4(\K)$ & $\PGL_4(\K)$& $\checkmark$\\
&$\mathsf{E_6}$ & $\mathsf{A_3}$ & $\neq\{3,4,5\}$& $\PGL_4(\K)$ & ------&\\ 
&$\mathsf{E_7,E_8}$  & $\mathsf{A_3}$ & & $\PGL_4(\K)$ & $\PGL_4(\K).2$&\\ \hline
\multirow{2}{*}{(A4)}&$\mathsf{E_6}$ & $\mathsf{A_4}$ & & $\PGL_5(\K)$ & ------ &\\ 
&$\mathsf{E_7,E_8}$ & $\mathsf{A_4}$ &  & $\PGL_5(\K)$&$\PGL_5(\K).2$& \\ 
\hline
\multirow{4}{*}{(A5)}&\cellcolor[gray]{0.8}$\mathsf{E_6}$ & \cellcolor[gray]{0.8}$\mathsf{A_5}$ &\cellcolor[gray]{0.8} & \cellcolor[gray]{0.8}$\PSL_6(\K,3)$ & \cellcolor[gray]{0.8}$\PSL_6(\K,3)$ &\cellcolor[gray]{0.8} $\checkmark$ \\ 
&\cellcolor[gray]{0.8}$\mathsf{E_7}$ &\cellcolor[gray]{0.8} $\mathsf{A_5}$ &\cellcolor[gray]{0.8} $\{2,4,5,6,7\}$ &\cellcolor[gray]{0.8} $\PSL_6(\K,2)$ & \cellcolor[gray]{0.8}$\PSL_6(\K,2).2$ &\cellcolor[gray]{0.8} \\ 
&$\mathsf{E_7}$ & $\mathsf{A_5}$ & $2\notin\cotyp(F)$ & $\PGL_6(\K)$ & $\PGL_6(\K).2$ &\\
&$\mathsf{E_8}$ & $\mathsf{A_5}$ &  & $\PGL_6(\K)$ & $\PGL_6(\K).2$ & \\ \hline
(A6)&$\mathsf{E_7,E_8}$ & $\mathsf{A_6}$ &  & $\PGL_7(\K)$ & $\PGL_7(\K).2$ & \\ 
\hline
(A7)&$\mathsf{E_8}$ & $\mathsf{A_7}$ &  & $\PGL_8(\K)$ & $\PGL_8(\K).2$ & \\ \hline
\multirow{2}{*}{(D4)}&$\mathsf{E_6}$ & $\mathsf{D_4}$ &  & $\overline{\mathsf{PGO}}_{8}^\circ(\K)$ & $\overline{\mathsf{PGO}}_{8}(\K)$& \\ 
&$\mathsf{E_7,E_8}$ & $\mathsf{D_4}$ &  & $\overline{\mathsf{PGO}}^\circ_{8}(\K)$ & $\overline{\mathsf{PGO}}^\circ_{8}(\K)$ & $\checkmark$ \\\hline
\multirow{2}{*}{(D5)}&$\mathsf{E_6}$ & $\mathsf{D_5}$ &  & $\overline{\mathsf{PGO}}_{10}^\circ(\K)$ &------ & \\
&$\mathsf{E_7,E_8}$ & $\mathsf{D_5}$ &   & $\overline{\mathsf{PGO}}_{10}^\circ(\K)$ & $\overline{\mathsf{PGO}}_{10}(\K)$ & \\ \hline
\multirow{2}{*}{(D6)}&\cellcolor[gray]{0.8}$\mathsf{E_7}$ & \cellcolor[gray]{0.8}$\mathsf{D_6}$ & \cellcolor[gray]{0.8} & \cellcolor[gray]{0.8}  $\overline{\mathsf{P\Omega}}_{12}(\K)$&\cellcolor[gray]{0.8} $\overline{\mathsf{P\Omega}}_{12}(\K)$ & \cellcolor[gray]{0.8}$\checkmark$\\  & $\mathsf{E_8}$ & $\mathsf{D_6}$ & &  $\overline{\mathsf{PGO}}^\circ_{12}(\K)$& $\overline{\mathsf{PGO}}^\circ_{12}(\K)$ & $\checkmark$\\ \hline
(D7)&$\mathsf{E_8}$ & $\mathsf{D_7}$ &  & $\overline{\mathsf{PGO}}_{14}^\circ(\K)$ & $\overline{\mathsf{PGO}}_{14}(\K)$ & \\ \hline
(E6)&$\mathsf{E_7,E_8}$ & $\mathsf{E_6}$ & & $\mathsf{PGE_6}(\K)$ &$\mathsf{PGE_6}(\K).2$  & \\  \hline
(E7)&$\mathsf{E_8}$ & $\mathsf{E_7}$ &  & $\PGE_7(\K)$ & $\PGE_7(\K)$ & $\checkmark$\\  \hline
\end{tabular}\vspace{.3cm}
\caption{Projectivity groups in the exceptional cases $\mathsf{E_6,E_7,E_8}$\label{Etabel}}
\end{table}

\begin{proof} In \cref{algapp} we noted that the projectivity groups for $\typ(\Delta)=\mathsf{E_8}$ are the full linear groups. We see no point in reproving this geometrically, the more because the $\mathsf{E_8}$ case is geometrically the most intricate case with the longer arguments; the interested reader can consult the first author's thesis for a detailed version of that. The general projectivity groups in the $\mathsf{E_8}$ case can be deduced from \cref{MR1} and \cref{MR2}, or by analogy with the $\mathsf{E_7}$ cases. The $\mathsf{E_6}$ and $\mathsf{E_7}$ cases do reveal some beautiful geometry and complementary views and we provide the detailed proofs. 
 
The \framebox{case (A1)} was handled in \cref{MR3}. 
We now handle the other cases. Note that we may again assume that $|\K|\geq 3$ as otherwise the linear groups are unique. \medskip

\framebox{Cases (A2) and (A3)} Every subdiagram of type $\mathsf{A_2}$ or $\mathsf{A_3}$ of $\mathsf{E}_r$, $r=6,7,8$, is contained in a subdiagram of type $\mathsf{A_3}$ or $\mathsf{A_4}$, respectively. Then the assertions all follow from \cref{corgate} and \cref{caseA}. \medskip

\framebox{Case (A4)}  If $2\notin\cotyp(F)$ for $\mathsf{E_6}$, or if $\cotyp(F)\neq\{1,2,3,4\}$ for $\mathsf{E_7,E_8}$, then we can again embed the diagram of $\Res_\Delta(F)$ in diagram of type $\mathsf{A_5}$ and use \cref{corgate} and \cref{caseA}. 

Now suppose $2\in\cotyp(F)$ for $\mathsf{E_6}$ and $\cotyp(F)=\{1,2,3,4\}$ for $\mathsf{E_7}$ and $\mathsf{E_8}$.  Then the assertion follows from \cref{corgate} and Case (A$^{**}$) for $r=5$ of \cref{caseD}.

\framebox{Case (A5)} In the Coxeter diagram of type $\mathsf{E_7}$, every subdiagram of type $\mathsf{A_5}$ not containing the node of type $2$ is contained in one of type $\mathsf{A_6}$ and hence the assertion in this case follows from \cref{corgate}.

Now suppose $\Delta$ is the building of type $\mathsf{E_6}$ over the field $\K$, and $F$ is a vertex of type $2$.

We argue in the corresponding Lie incidence geometry of type $\mathsf{E_{6,1}}$. There, $F$ is a $5$-space. Let $F_1,F_2,F_3$ be three $5$-spaces, with both $F_1$ and $F_3$ opposite both $F$ and $F_2$, and with $F$ adjacent to $F_2$, and $F_1$ adjacent to $F_3$, that is, $\pi_0:=F\cap F_2$ and $\pi_1:=F_1\cap F_3$ are planes. We also initially assume that $\pi_0$ and $\pi_1$ are opposite. Consider the projectivity $\rho\colon F\per F_1\per F_2\per F_3\per F$. We claim that $\rho$ fixes each point of $\pi_0$. Indeed, let $p_0\in\pi_0$ be such a point. Then clearly, since $F\cap F_2$ contains $p_0$, the projectivities ${F\per F_1\per F_2}$ and ${F_2\per F_3\per F}$ fix $p_0$, hence $p_0^\rho=p_0$ and the claim is proved.  Likewise, $\rho$ fixes each point of $F$ collinear with a point of $\pi_1$. The set of such points forms a plane $\pi_0'$ of $F$, disjoint from $\pi_0$. Choosing a basis of $F$ in $\pi_0\cup\pi_0'$, a matrix of $\rho$ is a diagonal matrix with diagonal elements three times $1$ and three times some scalar $k\in\K$. We now show that $k$ can be arbitrary. This is equivalent to showing that, 
\begin{itemize}\item[(*)] given $F_1,F_2$ and $F_3$ as above, given a line $L_0$ in $F$ containing points $x_0\in\pi_0$ and $x_0'\in\pi_0'$, and given two points $p,q\in L_0\setminus\{x_0,x_0'\}$, we can re-choose $F_3$ through $\pi_1$ such that $\rho$ maps $p$ to $q$.   \end{itemize}
We now prove (*). Let $p_1$ be the projection of $p$ onto $F_1$ and let $p_2$ be the projection of $p_1$ onto $F_2$. If $p$ and $p_2$ were not collinear, then the symp $\xi(p,p_2)$ would contain $p_1$ and $\pi_0$, leading to additional points in $\pi_0$ collinear to $p_1$ inside $\xi(p,p_2)$, contradicting the fact that $F$ and $F_1$ are opposite and hence $p_1$ is far from $F$. Hence there is some singular $4$-space $U$ containing $\pi_0,L$ and $p_2$. (Note that, since $U$ intersects $F$ in a $3$-space, \cref{44'5}$(ii)$ implies that $U$ is really a $4$-space and not a $4'$-space.) Set $\xi:=\xi(x_0,p_1)$. Then $\xi$ contains $p,q,p_1,p_2$ and the unique point $x_1\in\pi_1$ collinear to $x_0'$. It is clear that $\pi_1$ intersects $\xi$ in only $x_1$, as otherwise there would be a point of $\pi_1$ collinear to $x_0$, contradicting the fact that $\pi_0$ and $\pi_1$ are opposite.   So, \cref{plane-symp} yields a plane $\alpha\subseteq\xi$ collinear to $\pi_1$. \cref{44'5} implies that $\alpha$ and $\pi_1$ are contained in a unique $4'$-space $U_2$, which is itself contained in a unique $5$-space $F_3$. Now both $q$ and $p_2$ are (inside $\xi$) collinear to all points of respective lines of $\alpha$, implying that they are collinear to a common point $p_3\in F_3$. Now (*) follows. 

It now also follows that the set of such projectivities $\rho$ (as above with $\pi_0$ and $\pi_1$ opposite) is geometric (they are the homologies with two disjoint planes as centres, see \cref{exAn}). Now we drop the assumption of $\pi_0$ being opposite $\pi_1$. We claim that in this more general case, the projectivity $\rho$, as defined above, is the product of homologies with disjoint planes as centres. Indeed, set $\pi_0':=\proj^{F_1}_F(\pi_1)$  as above. If $\pi_0'$ is disjoint from $\pi_0$, then by \cref{Tits}, $\pi_0$ and $\pi_1$ are opposite. Now we treat the other cases. Set $d=\dim(\pi_0\cap\pi_0')$ and note that $d=-1$ is precisely the case we already proved. 
\begin{compactenum} \item[$d=0$] Let $\pi_2$ be a plane in $F$ sharing a line with $\pi_0$ but disjoint from $\pi_0'$. Then it is easy to check that the unique $4$-space $U$ containing the $3$-space generated by $\pi_2$ and $\pi_0$ is disjoint from $\proj^{F_1}_{F_2}(\pi_1)$. Hence there exists a $5$-space $F_2'\neq F$ containing $\pi_2$ and opposite both $F_1$ and $F_3$, and we have that $\pi_1$ is opposite both $F\cap F_2'$ and $F_2\cap F_2'$. We can now write $\rho$ as the product of $F\per F_1\per F_2'\per F_3\per F$ and the conjugate of $F_3\per F_2'\per F_1\per F_2\per F_3$ by $F\per F_3$, reducing this case to the case $d=-1$, which we already proved. 
\item[$d=1$] Let $\pi_2$ be a plane in $F$ sharing a line with $\pi_0$ and exactly one point (necessarily in $\pi_0$) with $\pi_0'$. Then, similarly as in the case $d=0$, we can choose a $5$-space $F_2'\neq F$ through $\pi_2$ opposite both $F_1$ and $F_3$ and such that $\pi_1$ has a unique point collinear to some point of $F\cap F_2'$, and that point is also the unique point of $\pi_1$ collinear to some point of $F_2\cap F_2'$. We have hence reduced this case to two times the case $d=0$, which we proved above.
\item[$d=2$] This case is similarly reduced to the case $d=1$. We leave the (straightforward) details to the reader. 
\end{compactenum} The claim is proved. Hence, thanks to \cref{conn}, we can apply \cref{upanddown} and obtain that $\Pi^+(F)$ is generated by all homologies with disjoint planes as centres. This group contains $\PSL_6(\K)$ and then clearly corresponds to all $6\times6$ matrices with a determinant equal to some non-zero 3th power.     Also, $\Pi(F)=\Pi^+(F)$ by virtue of \cref{MR1}. 

Now suppose $\cotyp(F)=\{2,4,5,6,7\}$ in case of $\mathsf{E_7}$.   Here we can take for $F$ a pair consisting of a $5$-space $W$ and a symp $\xi$ containing $W$ in the Lie incidence geometry of type $\mathsf{E_{7,7}}$ over the field $\K$. We employ the same method as in the previous case (Case $\mathsf{A_5}$ in $\mathsf{E_6}$), noting that a projectivity $\{W,\xi\}\per \{W',\xi'\}\per \{W'',\xi''\}$, where the simplices $\{W,\xi\}$ and $\{W'',\xi''\}$ are adjacent, is trivial as soon as $W=W''$, and so we may always assume that in such a (sub)sequence $W\neq W''$ and $\xi=\xi''$. However, since the action of the projectivity is apparently independent of the symps $\xi$ and $\xi'$, we may only consider projections from $5$-spaces onto $5$-spaces. Hence let $W_1,W_2,W_3$ be three $5$-spaces with both $W_1$ and $W_3$ opposite both $W$ and $W_2$, and $\Sigma_1:=W_1\cap W_3$ and $\Sigma_0:=W\cap W_2$ $3$-spaces such that the symps $\xi_{0}$ and $\xi_{1}$ containing $W,W_2$, and $W_1, W_3$, respectively, are also opposite. Similarly as in the previous case (type $\mathsf{A_5}$ inside $\mathsf{E_6}$), we may from the beginning assume that $\Sigma_0$ and $\Sigma_1$ are opposite $3$-spaces. Set $L_0:=\proj^{W_1}_W(\Sigma_1)$. Then, by  \cref{Tits} $L_0$ and $\Sigma_0$ are disjoint. Set $L_2:=\proj^{W_1}_{W_2}(\Sigma_1)$, then likewise $L_2$ and $\Sigma_0$ are disjoint. Let $x_0$ be an arbitrary point on $L_0$. Then inside $\xi_0$ one sees that there is a unique point $x_2$ on $L_2$ collinear to $x_0$. We claim that the projectivity $\rho_1\colon W\per W_1\per W_2$ maps $x_0$ to $x_2$. Indeed, set $W_1':=\proj^{\xi_1}_{\xi_0}(W_1)$.  Then, again by \cref{Tits}, $W_1'$ is disjoint from both $W$ and $W_2$. Set $U_1:=\proj^{W}_{W_1}(x_0)$ and $U_1':=\proj^{\xi_1}_{\xi_0}(U_1)$ and note that $\Sigma_1\subseteq U_1$. Then $U_1'\subseteq W_1'$. Since $x_0$ is at distance~$2$ from each point of $U_1$, it follows by \cref{point-sympE7} that $x_0$ is collinear to all points of $U_1'$. Hence $x_0$ is contained in the unique $5'$-space $V_0$ of $\xi_0$ containing $U_1'$. Likewise, if $x_2'=\proj^{W_1}_{W_2}(U_1)$, then $x_2'\in V_0$. Hence $x_0$ and $x_2'$, which is contained in $L_2$ as $U_1$ contains $\Sigma_1$, are collinear. Consequently, $x_2'=x_2$ and the claim is proved. 

It now also follows that $\rho_3\colon W_2\per W_3\per W$ maps $x_2$ back to $x_0$, since $x_0$ is the unique point on $L_0$ collinear to $x_2$. Consequently, the projectivity $\rho\colon W\per W_1\per W_2\per W_3\per W$ fixes each point of $L_0$. It is easy to see that it also fixes every point of $\Sigma_0$. Hence it is a homology corresponding to a diagonal matrix with the diagonal consisting of four times a $1$ and two times a scalar $k\in \K^\times$. If we can now show that every non-zero scalar $k$ can occur, then, similarly to the case $\mathsf{A_5}$ in $\mathsf{E_6}$, using \cref{upanddown} and \cref{conn}, we are done.  

But it follows from the arguments in the previous paragraphs that the projectivity $\rho_1$ coincides with the projectivity $W\per W_1'\per W_2$ inside $\xi_0$. Likewise the projectivity  $\rho_3$ coincides with the projectivity $W_2\per W_3'\per W$ inside $\xi_0$, with $W_3':=\proj^{\xi_1}_{\xi_0}(W_3)$. Now the assertion follows with exactly the same arguments as Case (A**) in the proof of \cref{caseD}. 

This concludes Case (A5). 

\framebox{Case (A6)} In a Coxeter diagram of type $\mathsf{E_7}$ a subdiagram of type $\mathsf{A}_6$ necessarily has type $\{1,3,4,5,6,7\}$. We work in the Lie incidence geometry $\Delta$ of type $\mathsf{E_{7,7}}$, where $F$ is a 6-space.  

Let $F=W_0$ and $W_2$ be two $6$-spaces in $\Delta$ intersecting in a $3$-space that we denote by $U$. Let $W_{1}$ be a $6$-space opposite both $W_0$ and $W_2$. Then $U$ projects to a plane $\alpha$ in $W_{1}$. Projection here means that each point of $U$ is symplectic to each point of $\alpha$. Let $U'$ be a $3$-space in $W_{1}$ that has no intersection with $\alpha$, and note that $U$ is opposite $U'$. Let $W_{3}$ be a $6$-space that intersects $W_{1}$ in $U'$ and is opposite both $W_0$ and $W_3$; this is possible since we assume $|\K|\geq 3$. Set  \[\rho \colon W_{0} \per W_{1} \per W_{2} \per W_{3} \per W_{0}.\] 

We claim that all points of $U$ are fixed under $\rho$. Indeed, a point of $W_{0} \cap W_{2}$ first maps to a hyperplane of $W_{1}$, then back to itself, then to a hyperplane of $W_{3}$ and again back to itself. That means $U$ is fixed pointwise under $\rho$. The claim is proved.

The projection of $U'$ onto $W_{0}$ is a plane that we will denote by $\beta$. Similarly to the proof of the previous claim, we find that $\beta$ is stabilised by $\rho$. We now intend te show that it is pointwise fixed. A point $p$ of $\beta$ maps to a hyperplane $H$ of $W_{1}$ that contains $U'$ and intersects $\alpha$ in a line. The projection of $H$ onto $W_{2}$ is a point that we will denote by $p'$. We claim that  the point $p$ is collinear to $p'$.

Indeed, suppose for a contradiction that $p$ is not collinear to $p'$. Then, since $p$ and $p'$ are both collinear to each point of $U$, they are contained in a unique symp $\xi$. Also, there is a unique symp $\xi_H$ containing the $5'$-space $H$ (by definition of $5'$-spaces). Since $p$ is not opposite any point of $H$, it follows from \cref{point-sympE7} that $p$ is close to $\xi_H$ and hence collinear to some $5'$-space $V_p$ of $\xi_H$. Similarly, $p'$ is collinear to some $5'$-space $V_{p'}$ of $\xi_H$. If $V_p\cap V_{p'}$ were empty, then $p$ and $p'$ would be opposite (as follows from considering an apartment through $V_p$ and $V_{p'}$). If $V_p\cap V_{p'}$ were $3$-dimensional, then $\xi$ and $\xi'$ would intersect in a $5$-space, and no point of $U$ could be opposite any point of $U'$ (by \cref{point-sympE7}), a contradiction. Now suppose $V_p\cap V_{p'}=K$ is a line. Then, clearly, also $\xi\cap\xi_H=K$. But this contradicts \cref{sympline} and the fact that $U\subseteq\xi$ is opposite $U'\subseteq\xi_H$. Hence $V_p=V_{p'}$, implying that $p$ and $p'$ are contained in the same unique $6$-space containing $V_p$, and so $p$ and $p'$ are collinear after all. The claim is proved. 
  
Now we claim that all points of $\beta$ are fixed under $\rho$. Indeed, the point $p$ projects to the hyperplane $H$ in $W_{1}$. This hyperplane projects to $p'$ in $W_{2}$. This already implied that $p$ and $p'$ are collinear. Likewise, $p'$ and the projection $p''$ onto $W_0$ of its projection onto $W3$ are collinear. Since $p''\in\beta$ and the point $p$ is the only point of $\beta$ that $p'$ is collinear to (as $p'^\perp\cap W_0$ is $4$-dimensional), we conclude $p=p''$ and the claim is proved.

Let $xy$ be a line in $W_{0}$ connecting a point $x \in U$ with a point $y \in \beta$. Let $a$ and $b$ be two distinct points on $xy$ not equal to either $x$ or $y$. We claim that we can re-define $W_3$ such that  $\rho$ maps $a$ to $b$. Let $a'$ and $y'$ be the images of $a$ and $y$, respectively, under $W_{0} \per W_{1} \per W_{2}$. Since $y\perp y'$ by one of our previous claims, $\langle a,b,a'\rangle$ is a singular plane and $ba'$ and $yy'$ intersect in a point $s$.

Let $W'$ be a $6$-space through $U$ and $s$. The projection of $W'$ onto $U'$ is a $6$-space that is not opposite $W'$ and that we will denote by $W_{3}$.
Since $W_{3}$ and $W'$ are not opposite, there exists, by \cref{3-6space}, a plane $\gamma_{3}$ in $W_{3}$ such that no point of $\gamma_{3}$ is opposite any point of $W'$. 

Since both $y$ and $y'$ are not opposite any point of $H$, and hence also not opposite any point of $U'$, the same thing holds for $s$. It follows that $s$ is not opposite any point of $W_3$, as $W'$ and $\gamma_3$ generate $W_3$. This, in turn, implies that $a'$ and $b$ are not opposite the same points of $W_3$ which means, in other words, that $b$ is the image of $a'$ under $W_2\per W_3\per W_0$. The claim is proved. 

By \cref{MR0}, $\Pi^+(W_0)$ contains $\PSL_7(\K)$. By the above, it also contains all diagonal matrices with diagonal $(1,1,1,1,k,k,k)$, $k\in\K^\times$ arbitrarily, and the entries $k$ can be anywhere. This readily implies that $\Pi^+(W_0)$ contains all matrices with determinant a third power, and since $3$ and $7$ are relatively prime, we conclude $\Pi^+(W_0)=\PGL_8(\K)$ and $\Pi(W_0)=\PGL_8(\K).2$. 

\framebox{Case (D5)} We first consider the case of a Coxeter diagram of type $\mathsf{E_6}$. Without loss of generality, we may assume that $F$ has type $6$. Hence we consider $F$ as a symp in a geometry of type $\mathsf{E_{6,1}}$ over the field $\K$.

Let $p_{1}$ be a point in $\Delta$ and $\xi_{0}$ a symp opposite $p_{1}$ in $\Delta$. Let $U$ be a maximal singular subspace in $\xi_{0}$. Then $U$ is a $4$-space. Let $\xi_{2}$ be another symp through $U$ opposite $p_1$. Opposite a $4$-space are lines. Let $L$ be a line through $p_{1}$ opposite $U$ and $V := \proj_{\xi_{0}}(L)$. Let $p_{3}$ be any point on $L$ opposite both $\xi_0$ and $\xi_2$,  so that we have a projectivity $\rho: \xi_{0}\per p_{1}\per \xi_{2}\per p_{3}\per\xi_{0}$. We will show that $\rho$ fixes $U$ and $V$ pointwise.

First let $x$ be a point in $U$. Then $x$ projects to a symp $\xi(x,p_{1})$, then back to $x$, since $x \in \xi_{0} \cap \xi_{2}$, then to a symp $\xi(x,p_{3})$ and then again back to $x$.

Now let $y$ be a point in $V$. The point $y=y_{0}$ projects to a symp $\xi(y_{0},p_{1}) = \xi_{y}$ and then to a point $y_{2} \in \xi_{2}$. Suppose $y_{0}$ and $y_{2}$ were not collinear. The symp $\xi_{y}$ has to contain the closure of $y_{0}$ and $y_{2}$. Both $y_{0}$ and $y_{2}$ are collinear to a $3$-space of $U$. The intersection of these $3$-spaces contains a plane. That means that the closure has to contain a plane of $U$ that then had to be contained in $\xi_{y}$. But  that contradicts the fact that $U$ and $p_{1}$ are opposite, because $p_{1}$ would have to be collinear to elements of that plane. It follows that $y_{0} \perp y_{2}$. Now, since $V = \proj_{\xi_{0}}(L)$, we see that $L\subseteq\xi_y$. So $y_{2}$ continues mapping to $\xi_y$ and then back to $y_{0}$. Hence points of $V$ are fixed.

Next we want to show that we can always define $p_{3}$ on $L$ in a way, such that the projectivity $\rho$ defined above maps an arbitrary point $p$ on a line $xy$, with $x \in U$ and $y \in V$, to another arbitrary point $q$ on $xy$ for $p \notin U,V$ and $q \notin U,V$. Given $U, V, L$ and $p_{1}$ as before and a line $xy$ as described above, let $p$ be an arbitrary point on $xy$ not in $U$ or $V$. Then projecting $p$ to $p_{1}$ yields a symp $\xi(p,p_{1})$ that projects to a point $p_{2}$ onto $\xi_{2}$. Let $y_{2} := \proj_{\xi_{2}} ( \proj_{p_{1}} (y))$. By the previous paragraph, the points $x,y,y_2$ generate a singular plane, which contains $p,q$ and $p_2$. Let $a:=p_{2}q \cap yy_{2}$. Suppose $a$ were collinear to $p_{1}$. Then $a$ would be in $\xi(p,p_{1})$ and $\xi(p,p_{1})$ would contain the plane $\langle x,y,y_{2}\rangle$ and in particular the line $xy$. But that contradicts the fact that $\xi(p,p_{1})$ intersects $\xi_{0}$ only in $p$. It follows that $a$ is not collinear to $p_{1}$. That means $a$ is collinear to a different point of $L$ that we will define as $p_{3}$. This point $p_3$ is not collinear to $p_2$ as otherwise $\xi(p,p_1$ would contain $L$, forcing $p\in V$, a contradiction. Since $a$ and $p_{2}$ are in $\xi(p_{3}, p_{2})$,  $\xi(p_{3}, p_{2})$ contains the whole line $ap_{2}$ and hence the point $q$. With that it follows that $p$ maps to $\xi(p,p_{1})$ to $p_{2}$ to $\xi(p_{3}, p_{2}) = \xi(p_{3}, q)$, and finally to $q$. 

Now \cref{PGO}$(i)$ proves the assertion. 

In a Coxeter diagram of type $\mathsf{E_7}$ (or $\mathsf{E_8}$), a subdiagram of type $\mathsf{D_5}$ is always contained in a subdiagram of type $\mathsf{E_6}$, and so we can apply \cref{corgate}, the previous paragraphs, and \cref{MR1}. 

\framebox{Case (D4)} Each subdiagram of type $\mathsf{D_4}$ in a diagram of type $\mathsf{E}_n$, $n=6,7,8$, is contained in a subdiagram of type $\mathsf{D_5}$. It follows that, if $F$ is a simplex of cotype $\mathsf{D_4}$ in a building $\Delta$ of type $\mathsf{E}_n$, $n=6,7,8$, then there is a subsimplex $F'\subseteq F$ of cotype $\mathsf{D_5}$. By the previous case and \cref{MR0}, the stabiliser of $F'$ in the little projective group $\Aut^\dagger(\Delta)$ of $\Delta$ acts on $\Res_\Delta(F')$ as the full linear (type preserving) group of automorphisms. Hence the stabiliser of $F$ in $\Aut^\dagger(\Delta)$, acting on $\Res_\Delta(F)$ contains the stabiliser in the full linear type preserving group of $\Res_\Delta(F')$ of the vertex $F\setminus F'$. This is clearly also the full linear type preserving group of $\Res_\Delta(F)$.

Now, in case of $\typ(\Delta)=\mathsf{E_6}$, it follows from \cref{MR1} that $\Pi^+(F)$ has index $2$ in $\Pi(F)$, and so $\Pi(F)$ is the full linear group of the corresponding polar space of $\Res_\Delta(F)$.  In case of $\mathsf{E_7}$ or $\mathsf{E_8}$, \cref{MR1} implies that $\Pi(F)=\Pi^+(F)$. 

\framebox{Case (D6)} We treat the case of type $\mathsf{D_6}$ inside type $\mathsf{E_7}$. Let $\xi$ be a symp of the geometry of type $\mathsf{E_{7,7}}$ over the field $\K$. We first claim that $\Pi(\xi)$, which is equal to $\Pi^+(\xi)$ by \cref{MR1}, contains all homologies pointwise fixing two $\xi$-opposite maximal singular $5$-spaces. Let $M_{13}$ and $M$ be two such subspaces of $\xi$. Let $\xi_3$ be an arbitrary symp distinct from $\xi$ and containing $M_{13}$. Let $\xi_2$ be a symp opposite both $\xi$ and $\xi_3$ (and note that this implies that each point of $\xi_2$ is opposite some point of $\xi$). There is a unique maximal singular subspace $M_{24}$ contained in $\xi_2$ each point of which is collinear to some point of $M$, that is, $M_{24}=\proj^{\xi}_{\xi_2}(M)$. Let $L$ be any given line in $\xi$ joining a point $p_{13}\in M_{13}$ and $p\in M$. Choose two points $q,q'\in L\setminus\{p_{13},p\}$. Set $q_2=\proj_{\xi_2}(q)$ and $q_3=\proj_{\xi_3}(q_2)$. 

If $q$ were not collinear to $q_3$, then the symp containing them would contain a $3$-dimensional subspace of $M_{13}$ and $q_2$; this would imply that $q_2$ is close to $\xi$, contradicting \cref{point-sympE7} in view of our remark in the previous paragraph that says that $q_2$ is opposite some point of $\xi$. Hence $\<q,q_3,q'\>$ is a plane $\pi$, contained in the symp $\zeta$ containing $p_{13}$ and $q_2$. Let $\xi'$ be any symp containing, $M_{24}$, but distinct from $\xi_2$. Let $p_{24}$ be the unique point of $\xi_2$ collinear to $p$, and note $p_{24}\in M_{24}$, and that $p_{24}$ and $q_2$ are collinear. Hence $p_{24}\in\zeta$. This implies that $\zeta\cap\xi'$ is either a line or a $5$-space through $p_{24}$. In the latter case $p_{13}$, being collinear with more than one point of that intersection, is close to $\xi'$, contradicting \cref{point-sympE7} and the fact that $M_{24}$ is opposite $M_{13}$, and hence $p_{13}$ is opposite points of $\xi'$. Hence $\zeta\cap\xi'$ is a line $K\ni p_{24}$. If $q_2$ were not collinear to $K$, then $\zeta$ would contain a $3$-space of $M_{24}$, again a similar contradiction (since $\zeta$ contains $p_{13}$). The planes $\pi$ and $\<q_2,K\>$ are easily seen to be opposite in $\zeta$, hence there is a unique point $q_4\in\<q_2,K\>$ collinear to both $q_3$ and $q'$. Now let $\xi_4$ be the symp containing $M_{24}$ and $q_4$, whose existence follows from \cref{symppencilE7}. Then one checks that $\xi_4$ is opposite both $\xi$ and $\xi_3$, and the projectivity $\xi\per\xi_2\per\xi_3\per\xi_4\per\xi$ pointwise fixed both $M_{13}$ and $M$, and maps $q$ to $q'$.  This proves the claim. 

Now, if we want to apply \cref{upanddown}, then we have to show that every projectivity \[\rho\colon\xi_0\per \xi_1\per \xi_2\per \xi_3\per \xi_0,\] with $M_0:=\xi_0\cap \xi_2$ and $M_1=\xi_1\cap\xi_3$ singular $5$-spaces, is the product of similar projectivities, but with $M_0$ opposite $M_1$. So suppose $M_0$ and $M_1$ are not opposite. As for the case of type $\mathsf{A_5}$ in type $\mathsf{E_6}$, there are $3$ cases to consider, and they are again all quite similar to each other, so we consider for example the case where the set of points of $M_0$ collinear to a point of $M_1$ is a line $L$ (the other possibilities are a $3$-space and the whole space $M_1$). Then we consider an appropriate $5$-space $M_2$ in $\xi_0$ intersecting $M_0$ in a $3$-space contained in $M_0$, and disjoint from $L$. Then we find a symp $\xi_2'$ containing $M_2$,  opposite both $\xi_1$ and $\xi_3$, and intersecting $\xi_2$ in a $5$-space opposite $M_1$. As in the case of type $
\mathsf{A_5}$ in type $\mathsf{E_6}$, we can now write $\rho$ as the product of $\xi_0\per \xi_1\per \xi_2'\per \xi_3\per \xi_0$ and the conjugate of $\xi_3\per \xi_2'\per \xi_1\per \xi_2\per \xi_3$ by $\xi_0\per \xi_3$. 

Now we can use \cref{PGO}, \cref{exDn} and, thanks to \cref{conn}, also \cref{upanddown} to conclude that $\Pi^+(F)=\overline{\mathsf{P\Omega}}_{12}(\K)$.

\framebox{Case (E6)} Let $\Gamma$ be the parapolar space of type $\mathsf{E_{7,7}}$ over the field $\K$. Let $p_1,p_2,p_3$ be three mutually opposite points of $\Gamma$. If we show that the self-projectivity $\rho\colon p_1\per p_2\per p_3\per p_1$ is always a symplectic polarity, then  \cref{triangles} and \cref{E77points} implies that $\Pi(p)$ is generated by all the symplectic polarities. By Proposition 6.8$(i)$ of \cite{Sch-Sas-Mal:22}, $\rho$ pointwise fixes a subbuilding of type $\mathsf{F_4}$. More exactly, if $\Res_\Delta(p_1)$ is viewed as a parapolar space $\Gamma_{p_1}$ of type $\mathsf{E_{6,1}}$ with the lines through $p_1$ as points, then $\rho$ pointwise fixes a geometric hyperplane inducing in $\Gamma_{p_1}$ a geometry of  type $\mathsf{F_{4,4}}$ over the field $\K$. It follows from \cite{Sch-Sas-Mal:18} that $\rho$ is a symplectic polarity.  Now \cref{exE6} and \cref{polE6} show that $\Pi^+(p)$ is $\mathsf{PGE_6}(\K)$ and $\Pi(p)$ is $\mathsf{PGE_6}(\K).2$. 
\end{proof}

This concludes the proofs of all our main results. We conclude the paper with some remarks. 

\begin{remark}\label{omega}
It now follows from \cref{caseE} that $\overline{\mathsf{P\Omega}}_{12}(\K)$ does not always coincide with $\overline{\PGO}_{12}(\K)$. Indeed, if it did, then the special projectivity groups in the buildings of type $\mathsf{E_7}$ of all irreducible residues of types contained in $\mathsf{D_6}$ would be the full linear groups. This contradicts the second grey row of \cref{Etabel} for fields containing non-square elements. 
\end{remark}


\begin{remark}
In the course of the proof of \cref{caseE} we do not really need the full strength of Lemmas~\ref{PGO}$(i)$ and \ref{polE6}, since we know by \cref{MR0} that also the little projective group is already contained in the group we want to generate. This knowledge would simplify the proof, since we would only have to prove that the little projective group together with the said homologies generate the full linear group.  
\end{remark}

\begin{remark}\label{nsl}
One could ask what to expect of the case where the diagram is not simply laced. For starters, the description of all spherical buildings is more complicated. Secondly, \cref{MR3} will not hold anymore in full generality. Indeed, there are polar spaces of rank $n$ where $\Pi^+(F)$ is not permutation equivalent to $\PGL_2(\K)$, for $F$ of cotype $n$, even if the set of maximal singular subspaces through a submaximal singular subspace carries in a natural way the structure of a projective line over $\K$ (like a symplectic polar space). However, analogues, appropriately phrased, of Theorems~\ref{MR1} and~\ref{MR2} should still hold. Also, \cref{MR0} remains through across all types. In the split case, the algebraic approach via the Chevalley groups and the weight lattices also still works, if one performs a computation in the symplectic generalised quadrangles analogously to the one we did  in \cref{algapp} in $R_{s,j}$ in case it is a projective plane over a field. For type $\mathsf{F_4}$, all special projectivity groups are the full linear groups as the weight lattice coincides with the root lattice. 
\end{remark}

\textbf{Acknowledgment.} The authors are grateful to Gernot Stroth for an illuminating discussion concerning the structure and action of Levi complements in Chevalley groups and to the referee for some very helpful comments and suggesting different approaches at various points, in particular the approach using algebraic groups in \cref{algapp}.


\begin{thebibliography}{99}

\bibitem{Abr-Bro:08}
{P.~Abramenko \& K.~S.~Brown}, \emph{Buildings. Theory and applications}, Graduate Texts in Math. \textbf{248}, Springer, New York, 2008. 

\bibitem{Bor:69}
{A.~Borel}, \emph{Linear Algebraic Groups}, Graduate Texts in Mathematics, Springer, New York, 1969.

\bibitem{Bou:68} N. Bourbaki, \emph{Groupes et Alg\`ebres de Lie}, Chapitres 4, 5 et 6, \emph{Actu. Sci. Ind.} \textbf{1337}, Hermann, Paris, 1968.

\bibitem{Bue-Coh:13} F. Buekenhout \& A. Cohen, \emph{Diagram Geometry Related to Classical Groups and Buildings}, A Series of Modern Surveys in Mathematics  {\bf 57}, Springer, Heidelberg, 2013.

\bibitem{lines} S. Busch \& H. Van Maldeghem, A characterisation of lines in finite Lie incidence geometries of classical type, \emph{Discrete Math}. \textbf{349} (2026), \#114711, 15pp.

\bibitem{lines2} S. Busch \& H. Van Maldeghem, Lines and opposition in finite Lie incidence geometries of exceptional type, in progress.

\bibitem{Car:72} R. W. Carter, \emph{Simple Groups of Lie Type,}  John Wiley \& Sons, London, New York, Sydney, Toronto, 1972, \emph{Classic Series}, 1989.

\bibitem{Car:93} R. W. Carter, \emph{Finite Groups of Lie Type, Conjugacy Classes and Complex Characters},  John Wiley \& Sons, London, New York, Sydney, Toronto, 1985, \emph{Classic Series},1993. 

\bibitem{Che:54} C. Chevalley, \emph{The Algebraic
Theory of Spinors}, Columbia University Press, New York, 1954.


\bibitem{Atlas} J. H. Conway, R. T. Curtis, S. P. Norton, R. A. Parker, R. A. Wilson, \emph{Atlas of Finite
Groups}. Clarendon Press, Oxford, 1985.

\bibitem{Coo:76} B. N. Cooperstein, {Some geometries associated with parabolic representations of groups of Lie type}, \emph{Canad. J. Math.} \textbf{28} (1976), 1021--1031. 

\bibitem{Coo:77} B. N. Cooperstein, A characterization of some Lie incidence structures, \emph{Geom. Dedicata} \textbf{6} (1977), 205--258.

\bibitem{CurKanSei}
{C.~Curtis, W.~Kantor, G.~Seitz}, The $2$-Transitive Permutation Representations of the Finite Chevalley Groups, \emph{Trans. of the Amer. Math. Soc.} \textbf{218}  (1976), 1--59.

\bibitem{Sch-Sas-Mal:18}  A. De Schepper, N. S. N. Sastry \& H. Van Maldeghem, Split buildings of type $\mathsf{F_4}$ in buildings of type $\mathsf{E_6}$, \emph{Abh. Math. Sem. Univ. Hamburg} \textbf{88} (2018), 97--160.

\bibitem{Sch-Sas-Mal:22}  A. De Schepper, N. S. N. Sastry \& H. Van Maldeghem, Buildings of exceptional type in buildings of type $\mathsf{E_7}$, \emph{Dissertationes Math.} \textbf{573} (2022), 1--80.

\bibitem{ADS-JS-HVM} A. De Schepper, J. Schillewaert \& H. Van Maldeghem, On the generating rank and embedding rank of the Lie incidence geometries, {\em Combinatorica} {\bf 44} (2024), 355--392.

\bibitem{derderij} A. De Schepper, J. Schillewaert, H. Van Maldeghem \& M. Victoor, Construction and characterisation of the varieties of the third row of the Freudenthal--Tits magic square, \emph{Geom. Dedicata} \textbf{218} (2024), Paper No. 20, 57pp. 

\bibitem{ADS-HVM} A. De Schepper and H. Van Maldeghem, On inclusions of exceptional long root geometries of type $E$, {\em Innov. Inc. Geom}, {\bf 20} (2023), 247--293.

\bibitem{Dieu:43} J. Dieudonn\'e, Les d\'eterminants sur un corps non commutatif, \emph{Bull. Soc. Math. France} \textbf{71} (1943), 27--45.

\bibitem{Dieu:62} J. Dieudonn\'e, \emph{La G\'eometrie des Groupes Classiques}, 2nd ed., Springer, Berlin, 1963.

\bibitem{GoLySo}
{D.~Gorenstein, R.~Lyons, R.~Solomon}, \emph{The classification of the finite simple groups}, Amer. Math. Soc. Surveys and Monographs \textbf{40(3)}, 1994.

\bibitem{Kas-Shu:02} A. Kassikova \& E. Shult, {Point-line characterisations of Lie incidence geometries}, \emph{Adv. Geom.} \textbf{2} (2002), 147--188.

\bibitem{Kna:88} N. Knarr,  Projectivities of generalized polygons,
{\it Ars Combin.}\ \textbf{25B} (1988), 265--275.

\bibitem{Muh-Ped-Wei:15} B. M\"uhlherr, H. Petersson \& R. M. Weiss, \emph{Descent in Buildings}, Annals of Mathematics Studies \textbf{190}, Princeton University Press, 2015. 

\bibitem{Muh-Ron:95}  B. M\"uhlherr \& M. A. Ronan, Local to global structure in twin buildings, \emph{Invent. Math.} \textbf{122} (1995), 71--81.

\bibitem{FGQ} S. E. Payne \& J. A. Thas, \emph{Finite Generalized Quadrangles}, Research notes in Math. \textbf{110}, Pittman, 1984; second edition: Europ. Math. Soc. Series of Lectures in Mathematics, 2009. 


\bibitem{Shu:11} E. E. Shult, \textit{Points and Lines: Characterizing the Classical Geometries}, Universitext, Springer-Verlag, Berlin Heidelberg, 2011.
\bibitem{Ste:20} Y. Stepanov, \emph{On an Octonionic Construction of the Groups of Type $\mathsf{E_6}$ and $\mathsf{^2E_6}$}, PhD thesis, Queen Mary University of London, 2020. 

\bibitem{Ste:68} R. Steinberg, \emph{Lectures on Chevalley Groups}, Yale University, Notes prepared by John Faulkner and Robert Wilson, June 1968. 
\texttt{https://www.math.utah.edu/~ptrapa/math-library/steinberg/steinberg-yale-notes.pdf}

\bibitem{Tem-Tha-Mal:11} B. Temmermans, J. A. Thas \& H. Van Maldeghem, Domesticity in projective spaces, \emph{Innov. Incid. Geom.} \textbf{12} (2011), 141--149. 

\bibitem{Tim:01}  F.G. Timmesfeld, \emph{Abstract root subgroups and simple groups of Lie type}, Monographs
in Mathematics \textbf{95}, Birkh\"auser, 2001.

\bibitem{Tits:57} J. Tits, Sur la g\'eometrie des $R$-espaces, \emph{J. Math. Pure Appl.} (9) \textbf{36} (1957), 17--38.

\bibitem{Tits:64}
{J. ~Tits}, \emph{Algebraic and abstract simple groups}, Ann. of Math \textbf{80} (1964), 313--329.

\bibitem{Tits:74} J. Tits, \emph{Buildings of spherical type and finite BN-pairs}, Lecture Notes in Math. \textbf{386}, Springer-Verlag, Berlin, 1974 (2nd printing, 1986).

\bibitem{Tits:77} J. Tits, Endliche Spiegelungsgruppen, die als Weylgruppen
auftreten, \emph{Invent. Math.} \textbf{43} (1977), 283--295.

\bibitem{Tits-Wei:02} J. Tits \& R. Weiss, \emph{Moufang Polygons}, Springer Monographs in Mathematics, Springer, 2002.

\bibitem{Mal:98} H. Van Maldeghem, \emph{Generalized Polgons}, Monographs in Mathematics \textbf{93}, Birkhaeuser, 1998.

\bibitem{Mal:12} H. Van Maldeghem, Symplectic polarities in buildings of type $\mathsf{E_6}$, \emph{Des. Codes Cryptogr.} \textbf{65} (2012), 115--125. 

\bibitem{Mal-Vic:19} H. Van Maldeghem \& M. Victoor,  Combinatorial and geometric constructions of spherical buildings, \emph{Surveys in Combinatorics} 2019, Cambridge University Press (ed. A. Lo et al.), \emph{London Math. Soc. Lect. Notes Ser.} \textbf{456} (2019), 237--265.

\bibitem{Mal-Vic:22} H. Van Maldeghem \& M. Victoor, On Severi varieties as intersections of a minimum number of quadrics, \emph{Cubo} \textbf{24} (2022), 307--331.

\end{thebibliography}
\end{document}